\documentclass[10pt,reqno]{amsart}

\usepackage[T1]{fontenc}
\usepackage[utf8]{inputenc}
\usepackage{lmodern}
\usepackage{amsmath,amssymb,amsthm,mathtools,mathrsfs}
\usepackage{cite}
\usepackage{enumerate}
\usepackage{microtype}
\usepackage[
colorlinks=true,
linkcolor=blue,
citecolor=blue,
urlcolor=blue,
]{hyperref}
\numberwithin{equation}{section}

\newtheorem{theorem}{Theorem}[section]
\newtheorem{proposition}[theorem]{Proposition}
\newtheorem{lemma}[theorem]{Lemma}
\newtheorem{corollary}[theorem]{Corollary}
\newtheorem{assumption}[theorem]{Assumption}
\newtheorem{definition}[theorem]{Definition}
\theoremstyle{remark}
\newtheorem{remark}[theorem]{Remark}
\theoremstyle{plain}

\newcommand{\real}{{\mathbb R}}
\newcommand{\nat}{{\mathbb N}}
\newcommand{\ent}{{\mathbb Z}}
\newcommand{\com}{{\mathbb C}}

\newcommand{\M}{\mathcal M}
\newcommand{\B}{\mathcal B}
\newcommand{\N}{\mathcal N}
\newcommand{\Aalg}{\mathcal A}
\newcommand{\Eform}{\mathcal E}
\renewcommand{\H}{\mathcal H}
\newcommand{\Ent}{\operatorname{Ent}}

\newcommand{\Dom}{\operatorname{Dom}}
\newcommand{\sgn}{\operatorname{sgn}}
\newcommand{\id}{\operatorname{id}}
\newcommand{\tr}{\operatorname{tr}}
\newcommand{\dd}{\,\mathrm d}
\newcommand{\1}{\mathbf 1}

\renewcommand\d{{\rm d}}

\newcommand\jdz[1]{ \left| #1 \right| }
\newcommand\sk[1]{ \left( #1 \right) }
\newcommand\mk[1]{ \left[ #1 \right] }
\newcommand\lk[1]{ \left\{ #1 \right\} }
\newcommand{\lara}[1]{\left\langle #1 \right\rangle}

\title[Noncommutative $L_p$-Poincar\'{e} inequalities]
{Noncommutative $L_p$-Poincar\'{e} inequalities}

\thanks{{\it 2020 Mathematics Subject Classification:} Primary: 46L53, 47N30. Secondary: 46N30, 60J46}
\thanks{{\it Key words:} $L_q$-Poincar\'{e}-type inequalities, noncommutative modified log-Sobolev inequality, Beckner-type inequalities, quantum Markov semigroups, noncommutative Dirichlet forms}
\author[Li Gao]{Li Gao}
\address{School of Mathematics and Statistics, Wuhan University, Hubei Province 430072, P. R. China}
\email{gao.li@whu.edu.cn}

\author[Marius Junge]{Marius Junge}
\address{Department of Mathematics, University of Illinois, Urbana, IL, 61801, USA}
\email{mjunge@illinois.edu}

\author[Zhendong  Xu]{Zhendong Xu}
\address{%Institute for Advanced Study in Mathematics, Harbin Institute of Technology,  Harbin 150001, China; and
Department of Mathematical Sciences and the Research Institute of Mathematics, Seoul National University, Gwanak-ro 1, Gwanak-gu, Seoul 08826, Republic of Korea}
\email{zhendong\_xu\_97@snu.ac.kr}

\author[Hao Zhang]{Hao Zhang}
\address{%Institute for Advanced Study in Mathematics, Harbin Institute of Technology,  Harbin 150001, China; and
Instituto de Ciencias Matem\'{a}ticas, Consejo Superior de Investigaciones Cient\'{i}ficas, C/ Nicol\'{a}s Cabrera 13-15, 28049, Madrid, Spain}
\email{hao.zhang@icmat.es}
\date{}

\begin{document}

\maketitle

\begin{abstract}
We derive $L_p$-Poincar\'{e} inequalities for quantum Markov semigroups on finite von Neumann algebras, from a spectral gap (PI) or from a modified logarithmic Sobolev inequality (MLSI). For $p\ge2$, the corresponding coefficients have optimal order $O(p)$ and $O(\sqrt p)$, respectively. Both estimates follow from Beckner inequalities by integration along semigroups. We also establish a non-tracial extension to quantum Markov semigroups satisfying GNS detailed balance. As a direct byproduct, we apply $L_p$-Poincar\'{e} inequalities to Schatten-valued functions on graphs in order to obtain the Schatten distortion of expanders.
\end{abstract}

\tableofcontents

\section{Introduction}

The $L_p$-Poincar\'{e} inequalities are basic tools for moment and concentration estimates in probability (cf. \cite{Bakry1996, Talagrand1991, Talagrand1995, Bobkov1997, Ledoux2001,AdamczakWolff2015} etc.). Their metric counterparts are used to study nonlinear spectral gaps and embedding obstructions in Banach space geometry (cf. \cite{MendelNaor2014,Naor2014,IVHV}). A classical example is Pisier's inequality for the Gaussian measure \cite{Pisier1986}. Let $\gamma_n$ be the standard Gaussian probability measure on
$\mathbb R^n$, then
\[
	\left\| f - \int_{\real^n} f\,\d \gamma_n \right\|_p \leq C \sqrt{p} \| \nabla(f) \|_p,
\]
for all $p\geq 2$ and all differentiable functions $f$ with $\nabla(f) \in L_p(\real^n; \gamma_n)$. The constant $O(\sqrt{p})$ is crucial for applications in concentration inequalities, as revealed in \cite{Bobkov1999, Efraim2008,Junge2015_2}. 

The corresponding question for quantum Markov semigroups is which assumptions yield such $L_p$ estimates, and with what dependence on $p$. Junge and Zeng obtained $L_p$-Poincar\'{e} inequalities for noncommutative diffusion semigroups under the curvature condition (often called as $\Gamma^2$-condition \cite{Bakry1985}), with coefficients growing in $O(p)$. In particular, they also obtained the $O(\sqrt{p})$ growth when the gradient is taken with respect to the $L_\infty$-norm. To recover the form of Pisier's estimate, one would like to retain the $L_p$-norm for gradients. This is known for some special semigroups associated with group cocycles under the $\Gamma^2$-condition \cite[Theorem~1.2]{Junge2015_2}.

\iffalse
Quantum analogues of these inequalities are essential for understanding noncommutative spaces. In particular, quantum Markov semigroups have been widely employed in operator algebras for studying approximation properties, structure theory, and noncommutative analysis (see e.g., \cite{Caspers2021,Junge2013}). Significant contributions include Efraim and Lust-Piquard's remarkable work \cite{Efraim2008} of $L_q$-Poincar\'{e}-type inequalities with $O(\sqrt{q})$ constants for discrete cubes and CAR algebras. Later, Junge and Zeng \cite{Zeng2014, Junge2015,Junge2015_2} developed a comprehensive and quite general framework centered on the noncommutative curvature dimension condition (often called as $\Gamma^2$-condition). They established that this $\Gamma^2$-condition, the quantum counterpart of the classical Bakry-\'{E}mery condition \cite{Bakry1985}, implies $L_q$-Poincar\'{e}-type inequalities, and they also utilized $L_q$-Poincar\'{e}-type inequalities to derive concentration estimates and transportation inequalities. Nevertheless, their derived estimates of best constants were not optimal for some concrete but important examples of quantum Markov semigroups. Motivated by this, we aim to improve the estimates of best constants on $L_q$-Poincar\'{e}-type inequalities.
\fi

In this paper, we aim to obtain the $O(\sqrt{p})$ growth with the $L_p$-norm for gradients from a modified logarithmic Sobolev inequality in the general setting. We also obtain the $O(p)$ growth coefficient from a spectral gap, with a different proof from \cite{JW}.

Now we illustrate our main results explicitly. Let $(\M, \tau)$ be a finite von Neumann algebra equipped with a normal faithful tracial state $\tau$. Let $(T_t)_{t \geq 0}$ be a quantum Markov semigroup and $A$ be its infinitesimal generator on $L_2(\M, \tau)$, and see Section \ref{sec:setting} for details. Denote by $\mathcal{M}^{T} = \{ x \in \mathcal{M} : T_t(x) = x \text{ for all } t \geq 0 \}$ the fixed-point algebra, and $E : \M \to \M^T$ the conditional expectation. 

At first, we recall some key definitions. Define $\Eform(x, y) = \tau(x^*Ay)$ as the Dirichlet form. We say that $(T_t)$ has a spectral gap  $\lambda>0$ relative to $E$ if
\begin{equation}
 \lambda\|x-E(x)\|_2^2\leq\Eform(x,x), \quad x \in {\rm dom}(A^{1/2}). \tag{PI}
 \label{eq:spectral-gap}
\end{equation}
The largest possible constant $\lambda$ is called the \emph{spectral gap}.

For positive invertible $x\in\M$, define the \emph{relative entropy}, with the convention $0\log0=0$, by
\begin{equation}
 \Ent_E(x)
 =\tau(x\log x)-\tau(E(x) \log E(x)). 
 \label{eq:conditional-entropy}
\end{equation}
For noninvertible $x$, this
identity remains valid in the regularized sense obtained from
$\log(E(x)+\varepsilon\1)$.  We say that $(T_t)$ satisfies a modified logarithm Sobolev inequality 
($\mathrm{MLSI}$) if
\begin{equation}
 \alpha\Ent_E(x)\leq\Eform(x,\log x) \tag{MLSI}
 \label{eq:mlsi}
\end{equation}
for every positive invertible $x \in {\rm dom}(A^{1/2})$ with $\log x \in {\rm dom}(A^{1/2})$. The largest possible constant $\alpha$ is called the MLSI constant.
\iffalse
If the
definition is initially imposed only on densities $\tau(h)=1$, then
\eqref{eq:mlsi} holds for arbitrary positive invertible $h$: for $c>0$,
\begin{equation}
 \Ent_E(ch)=c\Ent_E(h),
 \qquad
 \Eform(ch,\log(ch))=c\Eform(h,\log h),
 \label{eq:mlsi-homogeneity}
\end{equation}
because the affine $c\log c$ terms cancel and $\Eform(h,\1)=0$.
\fi

For $x, y \in {\rm dom}(A)$ with $x^*y \in {\rm dom}(A)$, let $\Gamma$ be the carr\'{e} du champ associated with the generator $A$:
\[
	\Gamma(x, y) = \frac{1}{2} \sk{ (Ax)^* y + x^* Ay - A(x^* y) }.
\]
%{\color{red}$(T_t)$ is said to be a noncommutative diffusion if $\Gamma(x, x) \in L_1(\M)$ for all $x \in {\rm dom}(A^{1/2}) \cap \M$.}
Our main result, which will be proved in Section \ref{sec:PI-MLSI-to-p}, is the following.
\begin{theorem}\label{Main-p-PI}
	Let $(T_t)$ be a quantum Markov semigroup on a finite von Neumann algebra $\M$. %{\color{blue}Assume $(T_t)$ is an nc diffusion}.
       Suppose that \eqref{eq:spectral-gap} holds with constant $\lambda > 0$, then for all $p \geq 2$ and any selfadjoint $x\in\M$ with $x, x^2 \in {\rm dom}(A)$,
	\begin{equation}\label{main-1}
		\| x - E(x) \|_p \leq \frac{ \sqrt{2}p }{\sqrt{\lambda}} \| \Gamma(x, x)^{1/2} \|_{p}. 
	\end{equation}
	Suppose that \eqref{eq:mlsi} holds with constant $\alpha > 0$, then
	\begin{equation}\label{main-2}
		\| x - E(x) \|_p \leq \frac{ 2\sqrt{2p} }{\sqrt{\alpha} } \| \Gamma(x, x)^{1/2} \|_{p}.
	\end{equation}
\end{theorem}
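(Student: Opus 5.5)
Following the abstract, I would derive both estimates from Beckner-type inequalities, integrated along the semigroup. Set $y = x - E(x)$. Since $E$ is the conditional expectation onto the fixed-point algebra, we have $T_t E = E T_t = E$, $AE=0$, and $\Gamma(y,y)=\Gamma(x,x)$ because $E(x)$ is a multiplicative-domain element of $\M^T$. So we may assume $E(x)=0$. The strategy is to write
\[
\|x\|_p^p = -\int_0^\infty \frac{d}{dt}\,\tau\big(|T_t x|^p\big)\dd t ,
\]
which is valid because $T_t x \to E(x)=0$ in $L_p$. The first assumption gives exponential convergence in $L_2$; the second gives it via entropy decay. Interpolating with $L_\infty$-contractivity extends the convergence to $L_p$. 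The derivative equals $p\,\tau(|T_tx|^{p-2}T_tx\, A T_t x)$, an $L_p$-type Dirichlet form $\Eform(T_tx, J_p(T_tx))$ with $J_p(z)=|z|^{p-2}z$.

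Step one is a Beckner inequality of the form
\[
\|z\|_p^p - \|E|z|^{p/2}\|_2^2 \le C_p\,\Eform\big(|z|^{p/2}\big),
\]
or its noncommutative analogue for $\tau(z^{p}) - \tau(E(z)^{p})$ with $z\ge 0$. I would take $C_p\sim 1/\lambda$ under (PI), applying (PI) to $|z|^{p/2}$. Under (MLSI) I would take $C_p\sim p/\alpha$, using the standard monotonicity of $q\mapsto$ Beckner constants between the $q\to1$ endpoint (MLSI) and $q=2$ (PI). The relevant quantity here is $\Eform(z^{p-1},z)$ compared against $\Eform(z,\log z)$; I would prove it via an integral representation $z^{q}=\int$ of resolvent or exponential type, plus operator convexity.

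Step two is the key duality/derivative estimate. I would bound the semigroup derivative in terms of $\Gamma$ by a Cauchy–Schwarz for $\Gamma$:
\[
|\tau(w\, A z)| = |\tau(\Gamma(w,z))|\le \|\Gamma(w,w)^{1/2}\|_{p'}\|\Gamma(z,z)^{1/2}\|_p .
\]
Alternatively, I would follow the Junge–Zeng route and write $x=\int_0^\infty A T_t x\,dt$. Testing against $w\in L_{p'}$ with $\|w\|_{p'}\le1$ and $E(w)=0$ gives
\[
\tau(xw)=\int_0^\infty \tau(\Gamma(x, T_t w))\,dt \le \|\Gamma(x,x)^{1/2}\|_p\int_0^\infty \|\Gamma(T_tw,T_tw)^{1/2}\|_{p'}\,dt .
\]
The problem then reduces to a gradient decay estimate for the dual exponent $p'\in(1,2]$. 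The Beckner inequality controls $\frac{d}{dt}\|T_tw\|_{p'}^{p'}$ from below by the Dirichlet form of $|T_tw|^{p'/2}$. Via $\Gamma$ and Hölder this dominates $\|\Gamma(T_tw)^{1/2}\|_{p'}^2$ up to factors of $\|T_tw\|_{p'}$. Integrating in $t$ with a dyadic/Cauchy–Schwarz splitting gives $\int_0^\infty\|\Gamma(T_tw)^{1/2}\|_{p'}dt\lesssim \sqrt{C_{p'}}\cdot(\text{factor in }p)$. With $C_{p'}\sim 1/((p'-1)\lambda)$ this produces $p/\sqrt\lambda$; with the MLSI-based constant, which stays bounded as $p'\to1$ up to $1/\alpha$, it produces $\sqrt{p/\alpha}$. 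I would track constants to reach $\sqrt2 p$ and $2\sqrt{2p}$.

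The main obstacle is the noncommutative chain rule in step two: comparing $\Eform(|z|^{q/2})$ or $\tau(J_q(z)Az)$ with $\|\Gamma(z,z)^{1/2}\|_q^2\,\|z\|_q^{q-2}$. No diffusion property is assumed, so the pointwise chain rule is unavailable. I would first try the hypothesis $x,x^2\in{\rm dom}(A)$, which makes $\Gamma(x,x)$ well defined. I would combine it with a derivation representation $\Gamma(a,b)=E_{\hat{\M}}(\delta(a)^*\delta(b))$ into a bimodule, where divided-difference (double operator integral) estimates handle the function $|z|^{q/2}$ with constants polynomial in $q$.
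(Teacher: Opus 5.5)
There is a genuine gap in Step two. Your duality identity $\tau(xw)=\int_0^\infty\Eform(x,T_tw)\,\dd t$ is how the paper starts too (with $w=J_p(y)$, $y=x-E(x)$). But you then reduce everything to the bound $\int_0^\infty\|\Gamma(T_tw,T_tw)^{1/2}\|_{q}\,\dd t\lesssim\|w\|_{q}$, $q=p'$, which you plan to get from the chain-rule comparison $\|\Gamma(z,z)^{1/2}\|_q^2\lesssim\|z\|_q^{2-q}\,\tau(J_q(z)A z)$.

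Without a diffusion property both statements are false with any constant, already for a commutative jump process. Take $\{a,b\}$ with $\mu(a)=\varepsilon$, $\mu(b)=1-\varepsilon$ and $A=I-E$. Then $\lambda=1$, and $\alpha\ge1$ because $\Eform(x,\log x)-\Ent_E(x)=\tau(E(x)(\log E(x)-\log x))\ge0$; both hold uniformly in $\varepsilon$. For $w=\1_{\{a\}}-\varepsilon\1$ one has $E(w)=0$, $Aw=w$, $T_tw=e^{-t}w$ and $\Gamma(w,w)=\frac{1-\varepsilon}{2}\1_{\{a\}}+\frac{\varepsilon}{2}\1_{\{b\}}$. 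Hence
\[
\int_0^\infty\|\Gamma(T_tw,T_tw)^{1/2}\|_q\,\dd t=\|\Gamma(w,w)^{1/2}\|_q\geq(1-\varepsilon)^{1/q}\Bigl(\frac{\varepsilon}{2}\Bigr)^{1/2},
\qquad
\|w\|_q\le(2\varepsilon)^{1/q},
\]
and the ratio grows like $\varepsilon^{1/2-1/q}\to\infty$ because $q<2$. Likewise $\tau(J_q(w)Aw)=\|w\|_q^q$, so your comparison fails by a factor of order $\varepsilon^{1-2/q}$.

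In divided-difference terms, the natural route (Hölder with weight $z^{q-2}$) would need $s^{q-2}\lesssim\phi^{[1]}(s,t)$ for $\phi(u)=u^{q-1}$. This fails as $s\to0$ with $t$ fixed, so no bimodule or double-operator-integral bookkeeping can rescue it. Your forward-flow route on $x$ has the same defect. It also needs $\Gamma(T_tx,T_tx)$ to be controlled by $\Gamma(x,x)$, which is a curvature hypothesis that is not available here.

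The missing idea is never to put a gradient on the dual element. The paper splits $J_p(y)=y_+^{p-1}-y_-^{p-1}$ and flows the positive parts $f_t^\pm=T_t(y_\pm^{p-1})$. In a finite-time bimodule it proves the weighted Cauchy--Schwarz inequality
\[
|\Eform(f,y)|^2\le\frac{2}{q-1}\,I_q(f)\,\tau\bigl(f^{2-q}\Gamma(y,y)\bigr),\qquad f\geq0 .
\]
This uses the scalar inequality in the direction that does hold, $\phi^{[1]}(s,t)^{-1}\le(s^{2-q}+t^{2-q})/(q-1)$. The weight is then a positive power of $f$, and it sits on the gradient of the $L_p$ element $y$.

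Hölder gives $\tau(f^{2-q}\Gamma(y,y))\le\|f\|_q^{2-q}\|\Gamma(y,y)^{1/2}\|_p^2$. The identity $-\frac{\dd}{\dd t}D_q(f_t)=qI_q(f_t)$ together with the $q$-Beckner inequality $\beta_qD_q\le I_q$ gives $\int_0^\infty\sqrt{I_q(f_t)}\,\dd t\le\frac{2}{q\sqrt{\beta_q}}\sqrt{D_q(f_0)}$.

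The constants come from $\beta_q\ge4(q-1)\lambda/q^2$ (PI applied to $x^{q/2}$, plus Stroock--Varopoulos) and $\beta_q\ge\alpha/q$. The latter is not a standard monotonicity in $q$. It is proved through $u^q=c_q\int_0^\infty\Psi_s(u)s^{q-2}\,\dd s$, which writes $D_q(x)$ as a positive superposition of the entropies $\Ent_E(x+s\1)$.
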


The above inequalities are understood in the extended sense when the gradient norm is infinite. A more general version for arbitrary elements (not necessarily self-adjoint), involving the gradients of both $x$ and $x^*$, follows from Theorem~\ref{thm:non-selfadjoint}.

The proof has two parts. We first show that a Beckner inequality at the exponent $q = p/(p - 1)$ with explicit constant implies the $L_p$-Poincar\'{e} inequality with corresponding coefficient dependence.
This follows from Lemma \ref{new-lemma3.2} and integration along heat flow. 
Second, PI and MLSI yield the required Beckner constants. In particular, the latter implication follows from a positive integral representation of relative entropy.

In the classical setting, the comparison between MLSI and Beckner inequalities was studied by Adamczak, Polaczyk and Strzelecki \cite{Adamczak2022}. Our approach is different from theirs and we obtain the best universal comparison constants; this answers the question posed in \cite[Remark~2.10]{Adamczak2022} in the scalar-fixed-point reversible setting. 
% The lower bound follows from an exact positive superposition of shifted entropy functions and does not use the moment induction of that work. A signed hyperbolic superposition then yields a conditional Beckner profile with coercivity $\rho_q\to1$ as $q\uparrow2$.

Earlier study on noncommutative $L_p$-Poincar\'e inequalities include Efraim and Lust-Piquard's remarkable work \cite{Efraim2008} for discrete cubes and CAR algebras with $O(\sqrt{q})$ coefficients. Junge and Zeng \cite{Zeng2014, Junge2015,Junge2015_2} devoloped general framework by a martingale approach under $\Gamma^2$ condition. Matrix-valued Poincar\'e inequalities are developed in \cite{HT,HuangTropp2021}. Li and Lu also obtained $L_p$-Poincar\'e-type inequalities from
Beckner inequalities for finite-dimensional matrix algebras \cite[Proposition 3.55]{Li2023}, but their
corresponding constants depends on the dimension, see \cite[Remark 3.22]{Li2023}. Jiao \emph{et al.} \cite[Corollary B]{Jiao2024} obtained Poincar\'{e}-type inequalities for noncommutative Orlicz spaces as byproducts of their main theorem; however, their approach requires stronger assumptions including hypercontractivity and  boundedness of Riesz transform. They did not even analyze the behaviors of best constants. Quite recently, for the spectral gap hypothesis, Junge and Wang \cite{JW} obtained the constant $p/\sqrt{2\lambda}$ for $p \geq 3$, treated $2<p<3$ with one additional factor $\sqrt{2}$, and established a nontracial extension for the cases of GNS detailed balance. We would like to emphasize that our argument applies to both PI and MLSI, and the best constant $O(\sqrt{p})$ provided by the latter. 
%Appendix \ref{sec:appendix} will give a further reinement of the coefficient from PI.

This paper is organized as follows. Section~\ref{sec:setting} is devoted to the preliminaries on noncommutatative $L_p$ spaces and quantum Markov semigroups, and we also devolope a finite-time dimodule structrue used in the proof.  In Section~\ref{sec:heat-flow}, we derive $L_p$-Poincar\'{e} inequalities from Beckner inequalities. In Section~\ref{sec:PI-MLSI-to-p}, we obtain the required Beckner constants from PI or MLSI, completing the proof of Theorem~\ref{Main-p-PI}. We deal with the non-tracial extension in Section~\ref{sec:extensions}. Section~\ref{sec:schatten-graphs} concerns the Schatten-valued Poinca\'{e} inequalities on expanders and the resulting distorsion estimate. Finally, in Section~\ref{sec:example}, we present many concrete examples of quantum Markov semigroups, and apply the main theorem to them.

{\bf Notation.} We use the following notation throughout the paper: for a $*$-algebra $\B$, we denote by $\B_+$ the positive part and $\B_{\rm sa}$ the self-adjoint part.

\bigskip

\section{Preliminaries}
\label{sec:setting}

\subsection{Noncommutative $L_p$ space}
We recall fundamental concepts from von Neumann algebra theory \cite{Takesaki1979}. Let $\M$ be a finite von Neumann algebra of operators acting on a Hilbert space $H$, equipped with a normalized normal faithful trace $\tau$. 
%We denote by $\M_+$ the positive part of $\M$, i.e., the set of positive operators in $\M$. For $x \in \M$, we define the support of $x$, denoted by $\supp\, x$, as the least projection $e \in \M$ such that $x e = x$ or equivalently, $e x = x$. Denote $ \S_+ = \lk{ x \in \M_+ : \tau ( \supp\, x ) < \infty }$, let $S$ be the linear span of $\S_+$. Then $\S$ is a $*$-subalgebra of $\M$ which is weak-$*$ dense in $\M$. Moreover, for $0 < p < \infty$ and $x \in \S$, we have $|x|^p \in \S_+$, where $|x| = (x^* x)^{ \frac{1}{2} }$, thus $\tau (|x|^p) < \infty$. We define
%\[
%	\| x \|_p = \mk{ \tau \sk{ |x|^p } }^{ \frac{1}{p} }, \quad x \in \S.
%\]
%Then $\| \cdot \|_p$ becomes a norm on $\S$ if $1 \leq p < \infty$, and a quasi-norm if $0 < p < 1$. Let $L_p (\M, \tau)$ (or simply $L_p (\M)$) denote the completion of $(\S, \| \cdot \|_p)$. We call this the noncommutative $L_p$ space associated with $(\M, \tau)$. In particular, the elements of $L_p(\M)$ can be described by densely defined closed operators measurable with respect to $(\M, \tau)$. We set $L_\infty (\M) = \M$ equipped with the operator norm for convenience. $\tau$ extends to a linear functional on $\S$, and since $|\tau(x)| \leq \| x \|_1$ for all $x \in \S$, it further extends to a linear functional on $L_1(\M)$.
A densely defined closed operator $x$ affiliated to $\M$ is \emph{measurable} if for every $\varepsilon > 0$, there exists a projection $e$ such that 
\[
	e(H) \subseteq {\rm dom}(x), \quad \tau(1 - e) < \varepsilon,
\]
where ${\rm dom}(x)$ denotes the domain of $x$. It is clear that any operator $x$ affiliated to $\M$ is measurable if $\tau$ is finite. The space $L_0(\mathcal{M})$ consists of all such measurable operators, and $\tau$ extends uniquely to a linear functional on the positive cone $L_0(\mathcal{M})_+$.

\iffalse
We shall introduce the noncommutative distribution function and generalized singular number, which will be useful in the context of the generalized $L_p$ spaces. Let $x \in L_0(\M)$, then $|x|$ has spectral measure $\e_s(|x|)$, denote $e^\bot_s (|x|) = \1_{s, \infty}(|x|)$ the spectral projection of $x$ corresponding to the interval $(s, \infty)$. Then we define
\[
	\lambda_s(x) = \tau \sk{ e^\bot_s(|x|) }, \ s> 0; \quad \mu_t(x) = \inf \lk{ s: \lambda_s(x) \leq t }, \ t > 0.
\]  
The function $s \mapsto \lambda_s(x)$ is called the distribution function of $x$ and $t \mapsto \mu_t(x)$ the generalized singular numbers of $x$.

\begin{proposition}\label{distribution_x}
	Let $x \in L_0(\M)_+$, then
	\begin{enumerate}[(i)]
		\item For any non-decreasing non-negative continuous function $\varphi$ on $\real_+$, we have
		\[	
			\tau\sk{ \varphi(x) } = \int_0^\infty \varphi ( \mu_t(x) ) \, \d t.
		\] 
		\item (Ky Fan inequality) For any non-decreasing convex function $\varphi$ on $\real_+$ with $\varphi(0) = 0$, we have
		\[
			\int_0^t \varphi ( \mu_s(x + y) ) \, \d s \leq \int_0^t \varphi ( \mu_s(x) + \mu_s(y) ) \, \d s.
		\]
		\item For any non-decreasing convex function $\varphi$ on $\real_+$ with $\varphi(0) = 0$ such that $t \mapsto \varphi(e^t)$ is convex on $\real$, we have
		\[
			\int_0^t \varphi ( \mu_s(xy) ) \, \d s \leq \int_0^t \varphi \sk{ \mu_s(x) \mu_s(y) } \, \d s.
		\]
	\end{enumerate}
\end{proposition}
\fi

Recall that every selfadjoint operator $x \in L_0(\M)_{\rm sa}$ admits a unique spectral decomposition
\[
	x = \int_\mathbb{R} \lambda \, \d e_\lambda(x),
\]
where $(e_\lambda(x))_\lambda$ is the spectral measure of $x$, supported on the spectrum $\sigma(x)$. Let $\varphi$ be a Borel function on $\real$, and the densely defined closed operator $\varphi(x)$ is defined via the following spectral integral
\[	
	\varphi(x) = \int_\mathbb{R} \varphi(\lambda) \, \d e_\lambda(x).
\] 
Then for $0<p<\infty$, the $L_p$-(quasi)norm is defined as
\[
	\| x \|_p = \mk{ \tau \sk{ |x|^p } }^{ \frac{1}{p} }, \quad \text{where} \quad |x| = (x^* x)^{1/2}, \ x \in L_0(\M).
\]
This is a norm for $1 \leq p < \infty$ and a quasi-norm for $0 < p < 1$. The noncommutative $L_p$ space, denoted by $L_p(\mathcal{M})$, consists of all $x \in L_0(\mathcal{M})$ with $\| x \|_p < \infty$. We refer the interested reader to \cite{Xu2007book} for more information on noncommutative $L_p$ spaces. 

For any $x \in L_0(\M)_{\rm sa}$, we deonte by
\[
x_+ = \1_{[0, \infty)}(x)x, \quad x_- = -\1_{(-\infty, 0)}(x)x, \quad J_p(x) = \sgn(x)|x|^{p-1}.
\]
The following identities are direct and will be used later.

\begin{lemma}
\label{lem:dual-seed}
Let $p>1$ and $p'=p/(p-1)$, and let $x \in L_p(\M)_{\rm sa}$. Then
\begin{equation}
 J_p(x) \in L_{p'}(\M),
 \quad
 \|J_p(x)\|_{p'}=\|x\|_p^{p-1},
 \label{eq:dual-seed-norm}
\end{equation}
and
\begin{equation}
J_p(x)x \in L_1(\M), \quad \tau(J_p(x)x) = \|x\|_p^p.
 \label{eq:dual-seed-pairing}
\end{equation}
% If $y\in\Aalg_{\rm sa}$, then the standing core hypothesis places $y_\pm^{p-1}$ in the bounded functional-calculus core used below.
\end{lemma}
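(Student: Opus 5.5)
The plan is to reduce everything to Borel functional calculus of the single selfadjoint operator $x$, so that all three quantities become integrals against the scalar spectral measure $\mu_x(B)=\tau(e_B(x))$. Write $x=\int_{\real}\lambda\,\d e_\lambda(x)$. Since $J_p(x)=\varphi(x)$ with $\varphi(\lambda)=\sgn(\lambda)|\lambda|^{p-1}$, the operator $J_p(x)$ is a densely defined closed operator affiliated with the abelian von Neumann subalgebra generated by the spectral projections of $x$. In particular it is measurable, because $\tau$ is finite.

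For the norm identity \eqref{eq:dual-seed-norm}, I would first compute $|J_p(x)|=|\varphi|(x)=|x|^{p-1}$ by functional calculus, since $J_p(x)$ is normal and $|\varphi(\lambda)|=|\lambda|^{p-1}$. Then, using $(p-1)p'=p$,
\[
\tau\bigl(|J_p(x)|^{p'}\bigr)=\tau\bigl(|x|^{(p-1)p'}\bigr)=\tau(|x|^p)=\|x\|_p^p<\infty.
\]
This shows $J_p(x)\in L_{p'}(\M)$. Taking the $1/p'$-th power gives $\|J_p(x)\|_{p'}=\|x\|_p^{p/p'}=\|x\|_p^{p-1}$.

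For the pairing identity \eqref{eq:dual-seed-pairing}, the only point needing care is the product of unbounded operators. Here I would use that $x$ and $J_p(x)$ are functions of the same operator. The strong product of two Borel functions of $x$ in $L_0(\M)$ is again given by functional calculus: the closure of $\varphi(x)\psi(x)$ is $(\varphi\psi)(x)$, a standard fact for commuting measurable operators. Thus
\[
J_p(x)x=\bigl(\sgn(\lambda)|\lambda|^{p-1}\lambda\bigr)(x)=|x|^p\ge 0 .
\]
Since $\tau(|x|^p)<\infty$, this element lies in $L_1(\M)_+$, and $\tau(J_p(x)x)=\|x\|_p^p$. Alternatively, integrability also follows from H\"older's inequality in noncommutative $L_p$ spaces, because $J_p(x)\in L_{p'}(\M)$ and $x\in L_p(\M)$.

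The main (mild) obstacle is justifying the functional calculus identities for unbounded measurable operators, namely $|\varphi(x)|=|\varphi|(x)$ and $\varphi(x)\psi(x)=(\varphi\psi)(x)$ in the measure topology. I would handle this by truncation. Set $x_n=x\1_{[-n,n]}(x)\in\M$. For these bounded operators all identities hold trivially. Then let $n\to\infty$, using monotone convergence of $\tau(|x_n|^p)\uparrow\tau(|x|^p)$ and convergence in measure of $J_p(x_n)\to J_p(x)$ and of $J_p(x_n)x_n=|x_n|^p\to|x|^p$.
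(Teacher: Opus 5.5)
Your proposal is correct and follows the same route as the paper. Both reduce the two identities to the scalar computations $\bigl|\sgn(\lambda)|\lambda|^{p-1}\bigr|^{p'}=|\lambda|^p$ and $\sgn(\lambda)|\lambda|^{p-1}\lambda=|\lambda|^p$ inside the Borel functional calculus of $x$. Your truncation argument only makes explicit the unbounded functional-calculus facts that the paper uses without comment.
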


\begin{proof}
This lemma follows immediately from
\[
 |J_p(x)|^{p'} = \int_\real |\sgn(\lambda) |\lambda|^{p - 1} |^{p'} \,\d e_\lambda(x) = \int_\real |\lambda|^{p}\,\d e_\lambda(x) = |x|^p 
\]
and
\[
 J_p(x)x
 = \int_\real \sgn(\lambda) |\lambda|^{p - 1} \lambda \,\d e_\lambda(x) = \int_\real |\lambda|^{p}\,\d e_\lambda(x) = |x|^p,
\]
as desired.
\end{proof}

\subsection{Quantum Markov semigroups}
We present fundamental properties of quantum Markov semigroups in the tracial setting, see e.g. \cite{Junge2010,Brannan2022}.
\begin{definition}
	A family of linear maps $(T_t)_{t \geq 0}$ on a finite von Neumann algebra $\M$ is a quantum Morkov semigroup if
	\begin{enumerate}[(i)]
		\item Every $T_t$ is a normal completely positive map on $\M$ such that $T(\1) = \1$;
		\item Every $T_t$ is selfadjoint with respect to the trace $\tau$, namely, $\tau ( T_t(x)^* y ) = \tau ( x^* T_t(y) )$;
		\item $T_s \circ T_t = T_{s+t}$ for all $s,t \geq 0$ with $T_0 = {\rm id}$;
		\item For each $x \in \M$, the map $t \mapsto T_t(x)$ is continuous  with respect to the $w^\ast$-topology on $\M$.
	\end{enumerate} 
\end{definition}
 
For simplicity, we just write $(T_t)$ to denote the quantum Markov semigroup $(T_t)_{t\geq 0}$. Conditions (i) and (ii) imply that each $T_t$ is trace preserving. Indeed, $\tau(T_t(x)) = \tau(x)$ for all $x \in \mathcal{M}_+$. Consequently, each $T_t$ extends uniquely to a contraction on $L_p(\mathcal{M})$ for $1 \leq p \leq \infty$ (cf. \cite{Junge2007}). The infinitesimal generator of $(T_t)$, denoted by $A$, is defined as
	\[
	A(x) = \lim_{t \to 0_+} \frac{x - T_t(x)}{t},
	\]
	where $x \in {\rm dom}(A)$, and ${\rm dom}(A)$ is defined as
\[
	{\rm dom(A)} = \lk{ x \in L_2(\M) : \lim_{t \to 0_+} \frac{x - T_t(x)}{t} \text{ converges in } L_2(\M) }.
\]
Note that $A$ is a densely defined closed operator on $L_2(\M)$ with its domain ${\rm dom}(A)$. Moreover, $A$ is positive, and then $T_t = e^{-tA}$ on $L_2(\M)$. 

Let $\M^T = \{ x \in \M : T_t(x) = x \text{ for all } t \geq 0 \}$ be the fixed-point algebra of $(T_t)$. And define $E: \M \to \M^T$ as the conditional expectation. It is clear that
\[
\lim_{t \to \infty} T_t (x) = E(x), \quad \forall\, x \in \M.
\]    
We refer to \cite{Davies1992} for basic properties of quantum Markov semigroups. 

Now we introduce Dirichlet forms associated with quantum Markov semigroups. Indeed, $(T_t)$ admits a \emph{Dirichlet form} (cf. \cite[Theorem 2.7]{Albeverio1977})
\[
	\Eform: L_2(\M) \to [0, \infty],
\]
with the domain ${\rm dom}(A^{1/2}) \subseteq L_2(\M)$ (see \cite[Lemma 2.6]{Albeverio1977}) and
\[
	\Eform(x, x) = \lim_{t \to 0_+} \frac{ \tau\mk{ x^* ({\rm id} - T_t) (x) } }{t}, \quad \forall\, x \in {\rm dom}(A^{1/2}).
\]
We define $\Eform(x, y)$ via complex polarization for all $ x , y \in {\rm dom}(A^{1/2})$. In particular, if $x \in {\rm dom}(A)$,
\[
\Eform(x, x) = \tau ( x^* A(x) ).
\]

By definition (cf. \cite{Albeverio1977}), a Dirichlet form $\Eform$ is a quadratic form satisfying:
\begin{enumerate}[(1)]
	\item $\Eform$ is densely defined, positive and closed under its graph norm $\| x \|_\Eform^2 : = \| x \|_2^2+ \Eform(x, x)$;
	\item Let $f \in C(\real)$ be a Lipchitz function with $f(0) = 0$ and $\| f \|_{\rm Lip}$ denotes the Lipchitz norm, then
	\[
		x \in {\rm dom}(A^{1/2}) \text{ selfadjoint} \Rightarrow f(x) \in {\rm dom}(A^{1/2}) \text{ and } \Eform(f(x), f(x)) \leq \| f \|_{\rm Lip}^2 \Eform(x, x).
	\]
\end{enumerate}
\iffalse
A Dirichlet forms is completely if the amplification
\[
 \Eform^{(n)} \sk{ \sum_{i, j = 1}^n e_{i, j} \otimes x_{i,j}  } := \sum_{i, j = 1}^n \Eform( x_{i, j} ), \quad \text{ on } M_n(\com) \otimes {\rm dom}(A^{1/2})
\]
is a Dirichlet form for all $n \geq 1$.
\fi

The associated Dirichlet algebra $\Aalg_\Eform:= {\rm dom}(A^{1/2}) \cap \M$ constitutes a dense $\ast$-subalgebra of $\mathcal{M}$ that serves as a core for $A^{1/2}$. Moreover, we have $T_t(\Aalg_\Eform) \subseteq \Aalg_\Eform$. As shown in \cite[Lemma 7.2]{Cipriani2003}, $\Aalg_\Eform$ is closed under $C^1$-functional calculus. 

% For computational purposes in concrete examples, it is often advantageous to work with a regular subalgebra $\Aalg \subseteq \Aalg_\Eform$ enjoying stronger analytical properties. Throughout this paper, we abbreviate $\Aalg_\Eform$ to $\Aalg$ when no confusion arises. 

For $x, y \in {\rm dom}(A)$ with $x^*y \in {\rm dom}(A)$, recall the definition of Meyer's carr\'{e} du champ associated with the generator $A$
\[
	\Gamma(x, y) = \frac{1}{2} \sk{ (Ax)^* y + x^* Ay - A(x^* y) }.
\]
It is well-known that $\Gamma$ is a (completely) positive sesquilinear form. 
\iffalse
and the second order gradient form
\[
	\Gamma^2(x, y) = \frac{1}{2} \sk{ \Gamma(Ax, y) + \Gamma(x, Ay) - A\Gamma(x, y) }.
\]
A quantum Markov semigroup $(T_t)$ is said to be a \emph{nc diffusion} if  $$ \Gamma(x, x) \in L_1(\M), \quad \text{ for all } x \in {\rm dom}(A^{1/2}) \cap \M. $$
\fi

To avoid cumbersome domain considerations, we fix 
\[
\Aalg = \lk{ x \in \M : x,\ x^*x \in {\rm dom}(A) }.
\]
Therefore for every $x \in \Aalg$, $\Gamma(x, x)$ is well-defined in $L_1(\M)$. Moreover, for each $x, y$ such that $\Gamma(x, y)\in L_1(\M)$, one has
$$ \Eform(x, y) = \tau\sk{ \Gamma(x, y) }. $$

\subsection{A finite-time bimodule}
This subsection is devoted to the construction of a special bimodule, which is vital in the proof of Lemma \ref{new-lemma3.2}. We would like to remark that such special module has already been considered in \cite[Subsection 10.3]{Cipriani2003}. We first construct it at a fixed time $h > 0$ and then pass to the infinitesimal generator by letting $h\to 0$. 

Let
\[
A_h = \frac{{\rm id} - T_h}{h}, \quad \Eform_h(x, y) = \tau\sk{ x^* A_h(y) }, 
\]
and
\begin{align*}
      \Gamma_h(x, y) & = \frac{1}{2} \sk{ (A_hx)^* y + x^* A_h(y) - A_h(x^* y) } \\
      & = \frac{1}{2h} \sk{ x^*y + T_h(x^*y) - T_h(x^*)y - x^*T_h(y) }.
\end{align*}
%Both $\Eform$ and $\Gamma_h$ are positive sesquilinear forms. 
On the algebraic tensor $\M \odot \M$, define
\[
\lara{ w \otimes x, y \otimes z }_h = \frac{1}{2h} \tau \sk{ x^* T_h(w^* y) z }.
\]
For every
\[
\xi = \sum_{i = 1}^n x_i \otimes y_i \in \M \odot \M,
\]
since $T_h$ is completely positive, we have
\[
\lara{ \xi, \xi }_h = \frac{1}{2h} \sum_{i, j = 1}^n \tau\sk{ y_i^* T_h(x_i^*x_j) y_j } \geq 0.
\]
Hence $\lara{ \cdot, \cdot }_h$ is semi-positive definite. Faithfulness of $\tau$ yields
\[
\lara{ \xi, \xi }_h = 0 \iff \sum_{i, j = 1}^n y_i^* T_h(x_i^*x_j) y_j = 0.
\] 
Let
\[
\mathcal{Z}_h = \lk{ \sum_{i = 1}^n x_i \otimes y_i : x_i, y_i\in\M, \sum_{i, j = 1}^n y_i^* T_h(x_i^*x_j) y_j = 0,\ n \in \nat }.
\]
Define $\H_h$ as the completetion of $\M \odot \M/\mathcal{Z}_h$ under this inner product. Then $\H_h$ becomes a Hilbert space. For $a \in \M$, define 
\[
L_a ( x \otimes y ) = ax \otimes y, \quad R_a(x \otimes y) = x \otimes ya.
\]
Then $L_\cdot$ is a representation of $\M$ on $B(\H_h)$, while $R_\cdot$ is an anti-representation. 
We have the following proposition.

\begin{proposition}
For $x \in \M$, define the difference vector
\[
v_h(x) = x \otimes \1 - \1 \otimes x.
\]
Then
      \begin{enumerate}
            \item For any $x, y \in \M$,
            \begin{equation}\label{eq:v-Eform}
            \lara{ v_h(x), v_h(y) }_h = \Eform_h(x, y).
            \end{equation}
            \item For any $x \in \M$ and $a \in \M$,
            \begin{equation}\label{eq:v-LR}
            \lara{ v_h(x), L_a v_h(x) } = \tau\sk{ a \Gamma_h(x^*, x^*) }, \quad \lara{ v_h(x), R_a v_h(x) }_h = \tau\sk{ a \Gamma_h(x, x) }.
            \end{equation}
      \end{enumerate}
\end{proposition}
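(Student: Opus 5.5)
This is a direct computation from the definition of $\langle\cdot,\cdot\rangle_h$: expand the difference vectors by sesquilinearity and collect terms using unitality and trace invariance of $T_h$. Write $v_h(x)=x\otimes\1-\1\otimes x$. The definition gives
\[
\lara{ w\otimes a, y\otimes b}_h=\tfrac{1}{2h}\tau\sk{a^*T_h(w^*y)b}.
\]

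For (1), expand $\lara{v_h(x),v_h(y)}_h$ into four terms:
\begin{itemize}
\item $\lara{x\otimes\1,y\otimes\1}_h=\frac1{2h}\tau(T_h(x^*y))=\frac1{2h}\tau(x^*y)$, since $T_h$ preserves $\tau$;
\item $\lara{\1\otimes x,\1\otimes y}_h=\frac1{2h}\tau(x^*T_h(\1)y)=\frac1{2h}\tau(x^*y)$, since $T_h(\1)=\1$;
\item $\lara{x\otimes\1,\1\otimes y}_h=\frac1{2h}\tau(T_h(x^*)y)$;
\item $\lara{\1\otimes x,y\otimes\1}_h=\frac1{2h}\tau(x^*T_h(y))$.
\end{itemize}
The two cross terms enter with a minus sign. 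By $\tau$-symmetry of $T_h$ and $T_h(x^*)=T_h(x)^*$, we have $\tau(T_h(x^*)y)=\tau(T_h(x)^*y)=\tau(x^*T_h(y))$. The total is therefore
\[
\tfrac1{2h}\bigl(2\tau(x^*y)-2\tau(x^*T_h y)\bigr)=\tau\sk{x^*A_h y}=\Eform_h(x,y).
\]

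For (2), the plan is the same. The vectors are
\[
L_av_h(x)=ax\otimes\1-a\otimes x,\qquad R_av_h(x)=x\otimes a-\1\otimes xa.
\]

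\emph{Right action.} Expanding $\lara{v_h(x),R_av_h(x)}_h$ gives four terms, $\frac1{2h}$ times
\[
\tau(T_h(x^*x)a)-\tau(T_h(x^*)xa)-\tau(x^*T_h(x)a)+\tau(x^*xa).
\]
By traciality each term is $\tau(a\,\cdot)$ of the corresponding summand in
\[
\Gamma_h(x,x)=\tfrac1{2h}\bigl(x^*x+T_h(x^*x)-T_h(x^*)x-x^*T_h(x)\bigr).
\]
This is exactly $\tau(a\Gamma_h(x,x))$.

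\emph{Left action.} Expanding $\lara{v_h(x),L_av_h(x)}_h$, the four terms are $\frac1{2h}$ times
\[
\tau(T_h(x^*ax)),\quad -\tau(T_h(x^*a)x),\quad -\tau(x^*T_h(ax)),\quad \tau(x^*T_h(a)x).
\]
Now use trace invariance and $\tau$-symmetry of $T_h$ to move $T_h$ off the factor containing $a$:
\begin{itemize}
\item $\tau(T_h(x^*ax))=\tau(axx^*)$;
\item $\tau(T_h(x^*a)x)=\tau(x^*a\,T_h(x))=\tau(a\,T_h(x)x^*)$;
\item $\tau(x^*T_h(ax))=\tau(T_h(x)^* a x)=\tau(a\,xT_h(x^*))$;
\item $\tau(x^*T_h(a)x)=\tau(T_h(a)xx^*)=\tau(a\,T_h(xx^*))$.
\end{itemize}
Collecting, the result is $\tau(a\,\Gamma_h(x^*,x^*))$, because
\[
\Gamma_h(x^*,x^*)=\tfrac1{2h}\bigl(xx^*+T_h(xx^*)-T_h(x)x^*-xT_h(x^*)\bigr).
\]

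These four-term identities also show that each pairing depends only on the class of the vector modulo $\mathcal Z_h$, so the computation is well defined on $\H_h$. That $L_a$ and $R_a$ descend to bounded operators on $\H_h$ is standard from complete positivity, and is not needed for the identities themselves.

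The only delicate step is the bookkeeping of adjoints in the left action. The inner product is conjugate-linear in the first slot in both tensor legs, so each expanded term must be rewritten using $T_h(y)^*=T_h(y^*)$ and $\tau(T_h(u)v)=\tau(uT_h(v))$ before comparing with $\Gamma_h(x^*,x^*)$. I expect this to be routine but error-prone rather than a genuine obstacle. Note that no limit $h\to0$ is required here; that passage comes later.
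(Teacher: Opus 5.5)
Your proposal is correct and is essentially the paper's proof: you expand the difference vectors sesquilinearly and use $T_h(\1)=\1$, trace invariance, $\tau$-symmetry and traciality. Your term-by-term treatment of the left action, which the paper leaves to the reader, correctly matches $\Gamma_h(x^*,x^*)$. One small correction to your aside on well-definedness: it follows from the semi-inner product descending to the quotient by its null space $\mathcal Z_h$ and from $L_a, R_a$ preserving $\mathcal Z_h$, not from the four-term identities themselves.
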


\begin{proof}
      Direct computation implies
      \begin{align*}
      2h\lara{ v_h(x), v_h(y) }_h & = \tau\sk{ T_h(x^*y) - x^*T_h(y) - T_h(x^*)y + x^*y } \\
      & = 2 \tau\sk{ x^*(I - T_h)(y) } = 2h \Eform_h(x, y).
      \end{align*}
      This proves \eqref{eq:v-Eform}. For the second identity, 
      \begin{align*}
      2h\lara{ v_h(x), R_a v_h(x) }_h & = 2h \lara{ x \otimes \1 - \1 \otimes x, x \otimes a - \1 \otimes xa }_h \\
      & = \tau\sk{ T_h(x^* x)a - x^*T_h(x)a - T_h(x^*)xa + x^*xa } \\
      & = 2h\tau \sk{ \Gamma_h(x, x)a }.
      \end{align*}
      The calulation for $L_a$ follows from the tracial property and symmetry of $T_h$, so we omit the details.
\end{proof}

\begin{remark}
This structure can be regarded as a finite-time version of the derivation bimodule structure introduced by Cipriani and Sauvageot \cite{Cipriani2003}: there exist a Hilbert $\M$-bimodule $\mathcal H$ and a closable
real
derivation
\[
 \delta:\Aalg  \to \mathcal H
\]
such that
\begin{equation*}
 \Eform(x,y)=\langle\delta x,\delta y\rangle_{\mathcal H}.
 \label{eq:gradient-realization}
\end{equation*}
For $x \in \M$, we define the left and right module actions $L_x$ and $R_x$ on $\mathcal H$ by
\[
L_x \xi = x\xi, \quad R_x \xi = \xi x,
\]
here left and right multiplication are understood in the sense of the bimodule structure. $L_x$ and $R_x$ are self-adjoint if $x$ is self-adjoint. Let $x \in \M_+$, if $(T_t)$ is a noncommutative diffusion, we have for self-adjoint $y\in\Aalg$,
\begin{equation*}
 \langle\delta y,L_x\delta y\rangle_{\mathcal H}
 =
 \langle\delta y,R_x\delta y\rangle_{\mathcal H}
 =\tau(x\Gamma(y,y)).
 \label{eq:left-right-gamma}
\end{equation*}
% For a general $y$, the second equality has $\Gamma(y^*,y^*)$ in place of $\Gamma(y,y)$.
The left and right representations commute, since
$L_xR_y\xi=x(\xi y)=(x\xi)y=R_yL_x\xi$; hence self-adjoint $L_x$ and $R_x$
admit a joint spectral calculus. However, one should assume the so-called ``$\Gamma$-regular'' condition, see, e.g. \cite{Wirth2021}, to ensure that $L_x$ and $R_x$ are normal on the whole $\M$. To avoid the discussion of the normality of $L_x$ and $R_x$, we utilize the current finite-time bimodule.
\end{remark}

It is easy to verify that $L_aR_b (x \otimes y) = ax \otimes yb = R_bL_a(x \otimes y)$, $L_a$ and $R_a$ are self-adjoint if $a$ is self-adjoint. Hence $L_a$ and $R_a$ are strongly commutative, they admit a joint spectral calculus for $a \in \M_{\rm sa}$. 

Let $I \subseteq \real$ be a closed interval and $\varphi \in C^1(I)$. Define
\[
\varphi^{[1]}(s,t)=
 \begin{cases}
 \dfrac{\varphi(s)-\varphi(t)}{s-t},&s\neq t,\\
 \varphi'(s),&s=t.
 \end{cases}
\]
We have the following chain rule. 

\begin{proposition}
    Let $x \in \M_{\rm sa}$ and $I$ be a compact interval containing the spectrum of $x$.  If $\varphi\in C^1(I)$, then $\varphi(x) \in \M $ and
\begin{equation}
 v_h(\varphi(x))
 =\varphi^{[1]}(L_x,R_x) v_h(x),
 \label{eq:doi-chain-rule}
\end{equation}
here $\varphi^{[1]}(L_x,R_x)$ is defined by the joint spectral calculus.  
\end{proposition}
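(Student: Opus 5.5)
The plan is to prove \eqref{eq:doi-chain-rule} first for polynomials, extend it to arbitrary $\varphi\in C^1(I)$ by uniform approximation, and deduce the formula from the action of $L_x$ and $R_x$ on elementary tensors.

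\emph{Monomials.} Take $\varphi(s)=s^n$. The divided difference is
\[
\varphi^{[1]}(s,t)=\sum_{k=0}^{n-1}s^kt^{n-1-k},
\]
so by the joint spectral calculus $\varphi^{[1]}(L_x,R_x)=\sum_{k=0}^{n-1}L_x^kR_x^{n-1-k}=\sum_{k}L_{x^k}R_{x^{n-1-k}}$. Here we use that $L$ is a representation and $R$ an anti-representation, and that $x^k$ commutes with $x^{n-1-k}$. Apply this to $v_h(x)=x\otimes\1-\1\otimes x$. The $k$-th term gives $x^{k+1}\otimes x^{n-1-k}-x^k\otimes x^{n-k}$, and the sum telescopes to $x^n\otimes\1-\1\otimes x^n=v_h(x^n)$. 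By linearity, \eqref{eq:doi-chain-rule} holds for every polynomial $\varphi$, as an identity in $\H_h$, i.e.\ modulo $\mathcal Z_h$.

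\emph{Approximation.} For $\varphi\in C^1(I)$, choose polynomials $p_m$ with $p_m\to\varphi$ and $p_m'\to\varphi'$ uniformly on $I$. For instance, approximate $\varphi'$ uniformly by polynomials and integrate. By the mean value theorem, $\|p_m^{[1]}-\varphi^{[1]}\|_{L_\infty(I\times I)}\le\|p_m'-\varphi'\|_{L_\infty(I)}\to0$. The spectra of $L_x$ and $R_x$ lie in $\sigma(x)\subseteq I$, since $a\mapsto L_a$ is a $*$-representation and $a\mapsto R_a$ is a $*$-anti-representation of the $C^*$-algebra $\M$, hence contractive. The joint spectral measure of the commuting self-adjoint pair $(L_x,R_x)$ is therefore supported in $I\times I$. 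Consequently $\|p_m^{[1]}(L_x,R_x)-\varphi^{[1]}(L_x,R_x)\|\to0$, and the right-hand side of \eqref{eq:doi-chain-rule} converges.

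For the left-hand side, $p_m(x)\to\varphi(x)$ in norm in $\M$ by continuous functional calculus; in particular $\varphi(x)\in\M$. Using \eqref{eq:v-Eform},
\[
\|v_h(y)\|_h^2=\Eform_h(y,y)\le \tfrac{2}{h}\|y\|_2^2\le\tfrac2h\|y\|_\infty^2,
\]
so $v_h(p_m(x))\to v_h(\varphi(x))$ in $\H_h$. Passing to the limit gives the claim.

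\emph{Main obstacle.} The delicate point is making the joint spectral calculus rigorous. The operators $L_a$ and $R_a$ are first defined on the quotient $\M\odot\M/\mathcal Z_h$, and one must check they are well defined and bounded there. For $L_a$, this follows from complete positivity of $T_h$: $\sum y_i^*T_h(x_i^*a^*ax_j)y_j\le\|a\|^2\sum y_i^*T_h(x_i^*x_j)y_j$. For $R_a$, one uses $\tau(b^*c\,aa^*)\le\|a\|^2\tau(b^*c)$ for $c=b\ge0$, with positivity of the matrix $[T_h(x_i^*x_j)]$. Once this is checked, the rest is routine.
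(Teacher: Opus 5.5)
Your proof is correct and follows the paper's route. You verify the identity for monomials, extend it to polynomials by linearity, and pass to $\varphi\in C^1(I)$ via polynomials $p_m$ with $p_m\to\varphi$ and $p_m'\to\varphi'$ uniformly, using $\sup_{I\times I}|p_m^{[1]}-\varphi^{[1]}|\le\|p_m'-\varphi'\|_\infty$ and $\|v_h(y)\|_h^2\le \tfrac{C}{h}\|y\|_\infty^2$. The differences are minor: you handle monomials by a direct telescoping sum rather than the paper's Leibniz rule $v_h(xy)=L_xv_h(y)+R_yv_h(x)$ plus induction, and you explicitly justify that $L_a,R_a$ are bounded on $\H_h$ and that the joint spectrum of $(L_x,R_x)$ lies in $I\times I$, which the paper takes for granted.
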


\begin{proof}
We first prove for $\varphi(x) = x^n$, then
\[
\varphi^{[1]}(s, t) = \frac{ s^n - t^n }{ s - t } = \sum_{k = 0}^{n - 1} s^kt^{n - 1 -k}.
\]
Note that
\begin{align*}
v_h(xy) & = xy  \otimes \1 - \1 \otimes xy = (xy \otimes \1 - x \otimes y) + (x \otimes y - \1 \otimes xy) \\  
& = L_x(y \otimes \1 - \1 \otimes y ) + R_y (x \otimes \1 - \1 \otimes x) \\
& = L_x v_h(y) + R_y v_h(x).
\end{align*}
By induction, we deduce
\[
v_h(x^n) = \sum_{k = 0}^{n - 1} L_x^k R_x^{n - 1 - k} v_h(x) = \varphi^{[1]}(L_x, R_x)v_h(x).
\]
Therefore by linearity, this yields \eqref{eq:doi-chain-rule} for all polynomials $\varphi$. 

For $\varphi \in C^1(I)$, the Stone-Weierstrass theorem implies that there exist polynomials $P_\varepsilon$ such that
\[
\| P_\varepsilon - \varphi \|_\infty + \| P_\varepsilon' - \varphi' \|_\infty \to 0 \quad \text{as } \varepsilon \to 0.
\]
Then
\[
\sup_{s, t \in I} | P_\varepsilon^{[1]} - \varphi^{[1]} | \leq \| P_\varepsilon' - \varphi' \|_\infty \to 0.
\]
Then the joint spectral calculus yields 
\[
\| P_\varepsilon^{[1]}(L_x, R_x) - \varphi^{[1]}(L_x, R_x) \|_{B(\H_h)} \to 0.
\]
Note that
\[
\| v_h(x) \|_{\H_h}^2 = \lara{v_h(x), v_h(x)}_h = \frac{1}{h} \tau \sk{ x^* (I - T_h)(x) } \leq \frac{1}{h}\| x \|_2^2 \leq \frac{1}{h}\|x\|_\infty^2,
\]
we conclude that
\[
\| v_h( P_\varepsilon(x) ) - v_h(\varphi(x)) \|_{\H_h} \leq \frac{1}{\sqrt{h}}\| P_\varepsilon(x) - \varphi(x) \|_\infty \leq \frac{1}{\sqrt{h}}\| P_\varepsilon - \varphi \|_\infty.
\]
Therefore
\begin{align*}
& \| v_h(\varphi(x)) - \varphi^{[1]}(L_x, R_x) v_h(x) \|_{\H_h} \\
\leq\ & \| v_h( P_\varepsilon(x) ) - v_h(\varphi(x)) \|_{\H_h} 
+ \|  P_\varepsilon^{[1]}(L_x, R_x) v_h(x) - \varphi^{[1]}(L_x, R_x) v_h(x) \|_{\H_h} \\
\leq \ & \frac{1}{\sqrt{h}} \| P_\varepsilon - \varphi \|_\infty + \frac{1}{\sqrt{h}} \| P_\varepsilon^{[1]}(L_x, R_x) - \varphi^{[1]}(L_x, R_x) \|_{B(\H_h)} \| x \|_\infty 
\end{align*}
converges to $0$ as $\varepsilon \to 0$, this proves \eqref{eq:doi-chain-rule}.
\end{proof}

\bigskip

\section{$L_p$-Poincar\'{e} inequalities from Beckner inequalities}
\label{sec:heat-flow}
In this section, we aim to show that an $L_p$-Poincar\'{e} inequality can be deduced from a $q$-Beckner inequality for $q = p/(p-1)$. The proof combines Lemma \ref{new-lemma3.2} with integration along the heat flow. In Section~\ref{sec:PI-MLSI-to-p}, we will derive the corresponding constants of Beckner inequalities under the PI and MLSI assumptions.

Throughout this section, fix $1<q<2$, and let $\phi(t) = t^{q - 1}$. For $x \in \M_+$, define
\begin{equation*}
 D_q(x)=\tau(x^q)-\tau((E(x))^q).
 \label{eq:Dq-Iq}
\end{equation*}
The contractivity of $E$ on $L_q(\M)$ implies $D_q(x)\geq0$. For $x \in \M_+ \cap {\rm dom}(A)$, define 
\[
I_q(x) = \tau( A(x)x^{q - 1}).
\]
It is equal to $\Eform(x, x^{q-1})$ if $x^{q-1} \in {\rm dom}(A^{1/2})$. We say that $(T_t)$ satisfies a $q$-Beckner  inequality if there exists $\beta_q>0$ such that for all $x \in \M_+ \cap {\rm dom}(A)$,
\begin{equation}
 \beta_qD_q(x)\leq I_q(x).
 \label{eq:beta-q}
\end{equation}
The largest possible constant $\beta_q$ in \eqref{eq:beta-q} is called the \emph{Beckner constant}.

The following lemma is needed in the proof of Lemma \ref{new-lemma3.2}.
\begin{lemma}
For $s,t\geq0$ and $1<q<2$,
\begin{equation}
 \phi^{[1]}(s,t)^{-1}
 \leq\frac{s^{2-q}+t^{2-q}}{q-1},
 \label{eq:reciprocal-scalar}
\end{equation}
where $\phi^{[1]}(0,0)^{-1} = 0$ for consistence with the regularization
$(s,t) \mapsto (s+\varepsilon,t+\varepsilon)$.
\end{lemma}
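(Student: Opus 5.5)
We must show, for $s,t\ge 0$ and $1<q<2$ with $\phi(t)=t^{q-1}$, that $\phi^{[1]}(s,t)^{-1}\le (s^{2-q}+t^{2-q})/(q-1)$. The plan is to reduce to a one-variable inequality by homogeneity, splitting off the degenerate cases first.

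\emph{Degenerate cases.} If $s=t=0$, both sides vanish by the stated convention. If $s=t>0$, then $\phi^{[1]}(s,s)=(q-1)s^{q-2}$, so the left side equals $s^{2-q}/(q-1)$. This is at most $2s^{2-q}/(q-1)$. If exactly one variable is zero, say $t=0<s$, then $\phi^{[1]}(s,0)=s^{q-2}$, and the left side is $s^{2-q}\le s^{2-q}/(q-1)$ because $q-1<1$.

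\emph{Reduction to one variable.} For $s\ne t$, both positive, symmetry lets us assume $s>t>0$. Both sides are homogeneous of degree $2-q$: the difference quotient $\phi^{[1]}$ has degree $q-2$, so its reciprocal has degree $2-q$. Setting $r=t/s\in(0,1)$, the claim becomes
\[
\frac{1-r}{1-r^{q-1}}\le\frac{1+r^{2-q}}{q-1}.
\]

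\emph{Integral representation.} I would use
\[
\phi^{[1]}(s,t)=(q-1)\int_0^1\big(\theta s+(1-\theta)t\big)^{q-2}\,\d\theta .
\]
Since $q-2<0$ and $\theta s+(1-\theta)t\le \max(s,t)=s$, each term in the integrand satisfies
\[
\big(\theta s+(1-\theta)t\big)^{q-2}\ge s^{q-2}.
\]
Hence $\phi^{[1]}(s,t)\ge (q-1)\max(s,t)^{q-2}$, and therefore
\[
\phi^{[1]}(s,t)^{-1}\le \frac{\max(s,t)^{2-q}}{q-1}\le\frac{s^{2-q}+t^{2-q}}{q-1}.
\]
This single bound in fact covers every case where at least one variable is positive. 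When $t=0$ the integrand is $(\theta s)^{q-2}$, which is still integrable and still bounded below by $s^{q-2}$, so the degenerate cases above are subsumed.

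The only step needing care is justifying the integral representation at the boundary $t=0$, where $\phi$ is not $C^1$ at $0$. A direct computation handles it: $\int_0^1(\theta s)^{q-2}\,\d\theta = s^{q-2}/(q-1)$, which agrees with $\phi(s)/s$. Alternatively, one can apply the $\varepsilon$-regularization $(s,t)\mapsto(s+\varepsilon,t+\varepsilon)$ and pass to the limit. So there is no real obstacle, and the argument gives the stronger bound with $\max(s,t)^{2-q}$ in place of $s^{2-q}+t^{2-q}$.
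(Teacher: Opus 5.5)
Your proof is correct and is essentially the paper's argument: the integral representation of the divided difference, the pointwise bound $(\theta s+(1-\theta)t)^{q-2}\ge \max(s,t)^{q-2}$ from $q-2<0$, and then $\max(s,t)^{2-q}\le s^{2-q}+t^{2-q}$, with the $t=0$ boundary handled as you do. The homogeneity reduction to $r=t/s$ is never used in your argument and can be dropped.
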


\begin{proof}
If $s = t \geq 0$, \eqref{eq:reciprocal-scalar} is trivial. For $s > t \geq 0$, observe that $q - 2 < 0$ and
\begin{align*}
      \frac{s^{q-1}-t^{q-1}}{s-t}
& = (q-1) \int_0^1((1-u)t+us)^{q-2}\,\d u \\
& \geq \frac{q - 1}{ s^{2 - q} } \geq \frac{q - 1}{ s^{2 - q} + t^{2 - q} },
\end{align*}
which directly implies \eqref{eq:reciprocal-scalar}. The case $t > s \geq 0$ follows in the same way.
\end{proof}

\begin{lemma}\label{new-lemma3.2}
Let $1 < q < 2$. Let $x\in \M_+ \cap {\rm dom}(A)$ and $y \in \Aalg_{\rm sa}$. Then
\begin{equation}
 |\Eform(x,y)|^2\leq \frac{2}{q-1} I_q(x)\tau\bigl(x^{2-q}\Gamma(y,y)\bigr).
 \label{eq:weighted-cauchy}
\end{equation}
\end{lemma}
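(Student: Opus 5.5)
The plan is to work at finite time $h>0$ in the bimodule $\H_h$, prove the inequality with $\Eform_h$, $\Gamma_h$ and $I_{q,h}(x):=\tau(A_h(x)x^{q-1})$ in place of $\Eform$, $\Gamma$, $I_q$, and then let $h\to0$. First I regularize: replace $x$ by $x_\varepsilon=x+\varepsilon\1$. Its spectrum lies in a compact interval $I\subset[\varepsilon,\infty)$, on which $\phi(t)=t^{q-1}$ is $C^1$. The chain rule \eqref{eq:doi-chain-rule} then gives
\[
v_h(x_\varepsilon^{q-1})=\phi^{[1]}(L_{x_\varepsilon},R_{x_\varepsilon})\,v_h(x_\varepsilon)=:K\,v_h(x_\varepsilon).
\]
Here $K$ is a positive, bounded, invertible operator on $\H_h$, because $\phi^{[1]}>0$ on $I\times I$. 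Since $v_h(\1)=0$, we have $v_h(x_\varepsilon)=v_h(x)$. By \eqref{eq:v-Eform}, $\Eform_h(x,y)=\lara{v_h(x),v_h(y)}_h$.

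Next I apply a weighted Cauchy--Schwarz inequality with the weight $K$:
\[
|\lara{v_h(x),v_h(y)}_h|^2\le \lara{v_h(x),Kv_h(x)}_h\,\lara{v_h(y),K^{-1}v_h(y)}_h .
\]
For the first factor, $\lara{v_h(x),Kv_h(x)}_h=\lara{v_h(x_\varepsilon),v_h(x_\varepsilon^{q-1})}_h=\Eform_h(x_\varepsilon,x_\varepsilon^{q-1})=\tau(A_h(x)x_\varepsilon^{q-1})$.

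For the second factor I use \eqref{eq:reciprocal-scalar} through the joint spectral calculus of the commuting self-adjoint operators $L_{x_\varepsilon},R_{x_\varepsilon}$. This gives the operator inequality
\[
K^{-1}\le \frac{1}{q-1}\bigl(L_{x_\varepsilon^{2-q}}+R_{x_\varepsilon^{2-q}}\bigr),
\]
where I have used that the functional calculus of $L_a$ is $L_{f(a)}$, and likewise for $R_a$. Then \eqref{eq:v-LR}, together with $y=y^*$, gives
\[
\lara{v_h(y),K^{-1}v_h(y)}_h\le \frac{2}{q-1}\,\tau\bigl(x_\varepsilon^{2-q}\Gamma_h(y,y)\bigr).
\]
This is precisely where self-adjointness of $y$ is needed, so that both the left and the right terms produce $\Gamma_h(y,y)$. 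Combining the two factors yields the finite-$h$, finite-$\varepsilon$ version of \eqref{eq:weighted-cauchy}.

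Finally I pass to the limits, first $\varepsilon\to0$ and then $h\to0$. For $\varepsilon\to0$: $x_\varepsilon^{q-1}\to x^{q-1}$ and $x_\varepsilon^{2-q}\to x^{2-q}$ in norm, since both exponents lie in $(0,1)$ and the functions are uniformly continuous on bounded sets. For $h\to0$ there are three terms.
\begin{itemize}
\item Since $x\in{\rm dom}(A)$, $A_hx\to Ax$ in $L_2$, so $I_{q,h}(x)\to I_q(x)$.
\item Since $y\in{\rm dom}(A)$, $\Eform_h(x,y)=\tau(x A_h y)\to\Eform(x,y)$.
\item Since $y,y^2\in{\rm dom}(A)$, $\Gamma_h(y,y)\to\Gamma(y,y)$ in $L_2$, hence weakly against the bounded element $x^{2-q}$.
\end{itemize}

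The main obstacle I expect is justifying the functional-calculus identity $f(L_a)=L_{f(a)}$ and the operator inequality for $K^{-1}$ on $\H_h$ rigorously. $L$ is a $*$-representation of the C$^*$-algebra $\M$ and $R$ is an anti-representation, so these follow from continuous functional calculus, approximating by polynomials. However, $K^{-1}$ is a function of both $L_{x_\varepsilon}$ and $R_{x_\varepsilon}$ jointly, so I will invoke the joint spectral measure, on which the scalar inequality \eqref{eq:reciprocal-scalar} holds pointwise. The $\varepsilon$-regularization is what keeps $K$ invertible and $\phi\in C^1(I)$.
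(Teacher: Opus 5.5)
Your proposal is correct and follows essentially the same route as the paper. Both arguments use the finite-time bimodule $\H_h$ and the shift $x_\varepsilon=x+\varepsilon\1$, which is harmless because $v_h(\1)=0$. Both then apply the weighted Cauchy--Schwarz inequality with the weight $\phi^{[1]}(L_{x_\varepsilon},R_{x_\varepsilon})$ together with the chain rule \eqref{eq:doi-chain-rule}, and bound the inverse weight by \eqref{eq:reciprocal-scalar} through the joint spectral calculus and \eqref{eq:v-LR} for self-adjoint $y$, before letting $\varepsilon\to0$ and then $h\to0$. The only cosmetic difference is that the paper first states the finite-$h$ estimate for $x\geq\varepsilon\1$ and then applies it to $x+\varepsilon\1$, whereas you work with $x_\varepsilon$ from the start.
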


\begin{proof}
\iffalse
 For
$f_\varepsilon=f+\varepsilon\1$ define
\begin{equation}
 K_{q,\varepsilon}(f;y)
 =\langle\delta y,
  \theta_q(L_{f_\varepsilon},R_{f_\varepsilon})^{-1}\delta y
 \rangle_{\mathcal H},
 \qquad
 K_q(f;y)=\lim_{\varepsilon\downarrow0}K_{q,\varepsilon}(f;y).
 \label{eq:K-regularized}
\end{equation}
The limit exists because the quadratic forms in
\eqref{eq:K-regularized} decrease as $\varepsilon\downarrow0$.  
\fi

Let $h>0$ and define
\[
I_{q, h}(x) = \Eform_h(x, x^{q - 1}), \quad x \in \M_+.
\]
We first show that for every $h > 0$, $x\in \M_+ \cap {\rm dom}(A)$ and $y \in \Aalg_{\rm sa}$
\begin{equation}
 |\Eform_h(x,y)|^2\leq \frac{2}{q-1} I_{q, h}(x)\tau\bigl(x^{2-q}\Gamma_h(y,y)\bigr).
 \label{eq:weighted-cauchy-finite}
\end{equation}

Suppose first that $x \geq \varepsilon \1$ for some $\varepsilon>0$. Since the spectrum of
$x$ is contained in the compact interval $[\varepsilon,\|x\|_\infty + \varepsilon]$, the joint
spectrum $\sigma (L_x,R_x) \subseteq [\varepsilon,\|x\|_\infty + \varepsilon]^2$. Thus $ \phi^{[1]} $ is bounded above and bounded away from zero on $\sigma (L_x,R_x)$. Thus
$ \phi^{[1]}(L_x,R_x) $ is a positive bounded and invertible operator on
$\H_h$.  Hence
\begin{align*}
 |\Eform_h(x,y)| & = |\langle v_h(x), v_h(y) \rangle_{h}| = |\langle \phi^{[1]}(L_x,R_x)^{1/2} v_h(x),
        \phi^{[1]}(L_x,R_x)^{-1/2} v_h(y) \rangle_{h}|\\
 &\leq
 \langle v_h(x), \phi^{[1]}(L_x,R_x) v_h(x) \rangle_{h}^{1/2}
 \langle v_h(y), \phi^{[1]}(L_x,R_x)^{-1} v_h(y)\rangle_{h}^{1/2} \\
 & = I_{q, h}(x)^{1/2}
 \langle v_h(y), \phi^{[1]}(L_x,R_x)^{-1} v_h(y)\rangle_{h}^{1/2},
\end{align*}
where we use \eqref{eq:v-Eform} and the chain rule \eqref{eq:doi-chain-rule} in the last equality. Applying \eqref{eq:reciprocal-scalar} implies
\[
 \phi^{[1]}(L_x,R_x)^{-1}
 \leq\frac1{q-1}\bigl(L_{x^{2-q}}+R_{x^{2-q}}\bigr).
\]
Therefore by \eqref{eq:v-LR},
\begin{align*}
 \langle v_h(y), \phi^{[1]}(L_x,R_x)^{-1} v_h(y)\rangle_{h} 
 &\leq\frac1{q-1}
 \bigl(
 \langle v_h(y), L_{x^{2-q}} v_h(y)\rangle_h
 +\langle v_h(y), R_{x^{2-q}} v_h(y)\rangle_h
 \bigr)\\
 &=\frac2{q-1}\tau(x^{2-q}\Gamma_h(y,y)).
\end{align*}
This proves \eqref{eq:weighted-cauchy-finite} for $x \geq \varepsilon1>0$. 

For general $x \in \M_+ \cap {\rm dom}(A)$, apply the preceding argument to
$x_\varepsilon=x+\varepsilon \1$ for $\varepsilon>0$.  Since 
$v_h(x_\varepsilon) = v_h(x) + \varepsilon v_h(\1) = v_h(x)$, one has
\[
\Eform_h(x_\varepsilon,y) =\lara{ v_h(x_\varepsilon), v_h(y) }_h =\Eform_h(x,y), \quad I_{q, h}(x_\varepsilon) = \Eform_h(x_\varepsilon, x_\varepsilon^{q-1}) = \tau(A_h(x)x_\varepsilon^{q-1}).
\]
Then one has
\begin{equation*}
|\Eform_h(x,y)|^2\leq \frac{2}{q-1} \tau(A_h(x)x_\varepsilon^{q-1})\tau\bigl(x^{2-q}_\varepsilon \Gamma_h(y,y)\bigr).
\end{equation*}
Since $x_\varepsilon \to x$, letting $\varepsilon \to 0$ yields \eqref{eq:weighted-cauchy-finite}.

Now let $h \to 0$. Since $A_h(x) \to A(x)$ and $A_h(y) \to A(y)$ in $L_2(\M)$, we have
\[
\Eform_h(x, y) = \tau\sk{ x^* A_h(y) } \to \tau\sk{x^* A(y)} = \Eform(x, y),
\]
and
\[
I_{q, h}(x) = \tau( A_h(x) x^{q-1}) \to \tau( A(x)x^{q-1} ) = I_q(x).
\]
Note that $y \in \Aalg_{\rm sa}$, and we have $A_h(y^*y) \to A(y^*y)$ in $L_2(\M)$. Hence, one has
\begin{align*}
\tau(x^{2-q} \Gamma_h(y, y) ) & = \frac{1}{2} \mk{ \tau( x^{2-q}A_h(y^*)y ) + \tau( x^{2-q}y^* A_h(y) ) - x^{2-q}A_h(y^*y) } \\
& \to \frac{1}{2} \mk{ \tau( x^{2-q}A(y^*)y ) + \tau( x^{2-q}y^* A(y) ) - x^{2-q}A(y^*y) } \\
& = \tau(x^{2-q} \Gamma(y, y)). 
\end{align*}
Finally, by letting $h \to 0$ in \eqref{eq:weighted-cauchy-finite}, we have finished the proof of \eqref{eq:weighted-cauchy}.
\end{proof}

The following lemma is standard, but we still present the detailed proof for reader's convenience.

\begin{lemma}\label{lem:trace-power-dissipation}
Let $1<q<2$ and $x \in \M_+ \cap {\rm dom}(A)$. Denote by $x_t=T_t(x)$. Then $t\mapsto D_q(x_t)$ is continuous on $[0,\infty)$, locally absolutely
continuous on $(0,\infty)$, and for almost every $t>0$
\begin{equation}
 -\frac{\dd}{\dd t}D_q(x_t)=qI_q(x_t).
 \label{eq:single-flow-dissipation}
\end{equation}
Moreover,
\begin{equation}
 \lim_{t\to\infty}D_q(x_t)=0.
 \label{eq:single-flow-defect-limit}
\end{equation}
\end{lemma}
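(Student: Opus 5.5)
The plan is to split $D_q(x_t)=\tau(x_t^q)-\tau(E(x_t)^q)$ and observe that the second term is constant in $t$. Indeed, $E\circ T_t=E$: the semigroup is $\tau$-symmetric, so $E T_t = T_t E = E$, because $E$ is the $\tau$-preserving conditional expectation onto the fixed points. Hence $\tau(E(x_t)^q)=\tau(E(x)^q)$, and it suffices to study $F(t)=\tau(x_t^q)$.

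Positivity and boundedness are preserved along the flow: $x_t\in\M_+$ and $\|x_t\|_\infty\le\|x\|_\infty$, so every spectrum lies in $K=[0,\|x\|_\infty]$.

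\emph{Continuity on $[0,\infty)$.} The map $t\mapsto x_t$ is continuous in $L_2(\M)$ by strong continuity of $e^{-tA}$. Since $s\mapsto s^q$ is Lipschitz on $K$ for $q>1$, the standard estimate for operator Lipschitz-type bounds in $L_1$ gives
\[
|\tau(a^q)-\tau(b^q)|\le q\,\|a-b\|_1\max(\|a\|_\infty,\|b\|_\infty)^{q-1}
\]
for $a,b\in\M_+$. This is the trace version of the mean value theorem, obtained by approximating $s^q$ by polynomials and using $\tau(P(a)-P(b))=\int_0^1\tau(P'(b+u(a-b))(a-b))\,\d u$. Continuity of $F$ follows.

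\emph{Derivative formula.} For $t>0$, $x_t\in{\rm dom}(A)$ and $t\mapsto x_t$ is differentiable in $L_2$ with derivative $-A x_t$. I would use the trace derivative formula: for $\varphi\in C^1(K)$ and a differentiable path $a_t$ in $L_2$ with uniformly bounded spectra in $K$,
\[
\frac{\d}{\d t}\tau(\varphi(a_t))=\tau(\varphi'(a_t)\dot a_t).
\]
This holds first for polynomials, by cyclicity of the trace, and then for $C^1$ functions by uniform approximation of $\varphi'$ on $K$. The limit is uniform in $t$ on compacts once $\|\dot a_t\|_1$ is locally bounded. With $\varphi(s)=s^q$ this gives $F'(t)=-q\,\tau(x_t^{q-1}Ax_t)=-qI_q(x_t)$.

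It remains to check local absolute continuity. The bound $\|Ax_t\|_2=\|T_tAx\|_2\le\|Ax\|_2$ holds for $x\in{\rm dom}(A)$. So $F$ is locally Lipschitz, and in fact $C^1$ on $(0,\infty)$ and continuous up to $0$. I expect the main technical point to be justifying the derivative formula for the non-polynomial power $s^q$ near $0$. However, $s^q$ is $C^1$ on $[0,\infty)$ since $q>1$, so polynomial approximation in $C^1(K)$ suffices and no regularization is needed.

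\emph{Limit $t\to\infty$.} From $\|x_t-E(x)\|_2\to0$ I conclude via the Lipschitz estimate above: $\|x_t-E(x)\|_1\le\|x_t-E(x)\|_2$, and the spectra are uniformly in $K$. Hence $\tau(x_t^q)\to\tau(E(x)^q)$, giving \eqref{eq:single-flow-defect-limit}.

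The $L_2$ convergence itself holds because $T_t=e^{-tA}$ converges strongly to the spectral projection onto $\ker A$. That projection equals $E$ on $L_2$ by ergodic-type identification: $\ker A$ consists of the $T$-invariant vectors, and their bounded elements are dense in the closure of $\M^T$.
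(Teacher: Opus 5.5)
Your proposal is correct and follows the paper's proof. Both use the same reduction via $E\circ T_t=E$, so only $\tau(x_t^q)$ varies; the same trace differentiation along $\frac{\d}{\d t}x_t=-Ax_t$, giving $-q\,\tau(x_t^{q-1}Ax_t)=-qI_q(x_t)$; and the same $L_2$-convergence $T_t(x)\to E(x)$ for the limit. You supply more detail than the paper: the $C^1$ polynomial approximation, the trace mean-value estimate $|\tau(a^q)-\tau(b^q)|\le q\|a-b\|_1\max(\|a\|_\infty,\|b\|_\infty)^{q-1}$, and the bound $\|Ax_t\|_2\le\|Ax\|_2$ in place of the paper's spectral bound on $\|AT_t\|$.

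The one loose phrase is your identification of the limit with $E(x)$, which the paper simply takes as known. As written, it does not give the needed inclusion $\ker A\subseteq\overline{\M^T}^{\,L_2}$. The cleanest fix is as follows: the $L_2$-limit $y$ of $T_t(x)$ lies in $\M$, because the ball of $\M$ is $L_2$-closed, and it satisfies $T_sy=y$. Hence $y\in\M^T$, and $y=E(y)=\lim_t E(T_tx)=E(x)$.
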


\begin{proof}
By definition,
\[
\tau( (E(x_t))^q ) = \tau( ( T_tE(x) )^q ) = \tau(E(x)^q).
\]
Hence
\[
\frac{\dd}{\dd t} D_q(x_t) = \frac{\dd}{\dd t} \sk{ \tau(x_t^q) - \tau( (E(x_t))^q ) } = \frac{\dd}{\dd t}\tau(x_t^q).
\]
By the spectral theorem, one has
\[
 \|AT_t\|_{L_2(\M)\to L_2(\M)}
 \leq\sup_{s\geq0}se^{-ts}<\infty,
 \quad t>0.
\]
Thus, $x_t\in {\rm dom}(A)$ and $\frac{\d}{\d t}x_t=-Ax_t$ in $L_2(\M)$ for $t > 0$, hence it also holds in $L_q(\M)$.
The trace differential implies
\begin{align*}
 \frac{\dd}{\dd t}\tau(x_t^q)
 =-q\tau(x_t^{q-1}A (x_t) )=-qI_q(x_t).
\end{align*}
\iffalse
The second equality is precisely
Lemma~\ref{lem:generator-singular-pairing}, applied to
$f_t\in\Dom(A)\cap\M$; traciality permits the two factors in its scalar
pairing to be interchanged.  It does not assert that $f_t^{q-1}$ is a form
element.  Since $Ef_t=ET_tf=Ef$, the
term $\tau((Ef_t)^q)$ is constant.  This proves
\eqref{eq:single-flow-dissipation}; the same computation on compact
positive-time intervals gives local absolute continuity.
\fi
Finally, when $t\to\infty$, $T_t(x)\to E(x)$ in $L_2(\M)$ and this convergence also holds in $L_q(\M)$. This yields
$\tau(x_t^q)\to\tau((E(x))^q)$, which proves
\eqref{eq:single-flow-defect-limit}.  
% Strong continuity at zero in $L_2$, and hence in $L_q$, proves the asserted continuity at zero.
\end{proof}

We employ the $q$-Beckner inequality \eqref{eq:beta-q} to show the following $L_p$-Poincar\'{e} inequality. In the meantime, we also derive the associated constants.

\begin{theorem}
\label{thm:single-defect}
Let $p>2$ and $q = p/(p - 1)$.
Assume that \eqref{eq:beta-q} holds with $\beta_q>0$.  Then every $x \in \Aalg_{\rm sa}$ satisfies
\begin{equation}
 \|x-E(x)\|_p
 \leq
 \frac{2^{3/2+1/p}}
 {q\sqrt{\beta_q(q-1)}}
 \|\Gamma(x,x)^{1/2}\|_p.
 \label{eq:master-conclusion}
\end{equation}
\end{theorem}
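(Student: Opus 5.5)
The plan is a duality argument. Set $z=x-E(x)\in\M_{\rm sa}$. By Lemma~\ref{lem:dual-seed}, $\|z\|_p^p=\tau(J_p(z)z)$, and I split
\[
J_p(z)=f-g,\qquad f=z_+^{p-1},\quad g=z_-^{p-1}.
\]
Both $f$ and $g$ are positive elements of $\M$, and since $(p-1)(q-1)=1$ we have $f^q=z_+^p$ and $g^q=z_-^p$.

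For a positive $w\in\{f,g\}$, the fact that $E$ is a $\tau$-preserving conditional expectation gives
\[
\tau(wz)=\tau(wx)-\tau(E(w)E(x))=\tau\bigl((w-E(w))x\bigr).
\]
Put $w_t=T_t(w)$. For $t>0$ we have $w_t\in\M_+\cap{\rm dom}(A)$ and $\frac{\d}{\d t}w_t=-Aw_t$, and $w_t\to E(w)$ in $L_2$ as $t\to\infty$. Hence
\[
\tau(wz)=\int_0^\infty \tau(A(w_t)x)\,\d t=\int_0^\infty\Eform(w_t,x)\,\d t .
\]
To avoid the endpoint $t=0$, I will first integrate from $t_0>0$ and then let $t_0\to0$, using $L_2$-continuity of $T_t$.

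Next I bound the integrand by Lemma~\ref{new-lemma3.2} and Hölder:
\[
|\Eform(w_t,x)|\le\sqrt{\tfrac{2}{q-1}}\,I_q(w_t)^{1/2}\,\tau\bigl(w_t^{2-q}\Gamma(x,x)\bigr)^{1/2}.
\]
For the last factor, note that $(2-q)\cdot\frac{p}{p-2}=q$. Hölder with exponents $\frac{p}{p-2}$ and $\frac p2$, together with $L_q$-contractivity of $T_t$, gives
\[
\tau\bigl(w_t^{2-q}\Gamma(x,x)\bigr)\le\tau(w_t^q)^{\frac{p-2}{p}}\,\|\Gamma(x,x)\|_{p/2}\le\tau(w^q)^{\frac{p-2}{p}}\,\|\Gamma(x,x)^{1/2}\|_p^2 .
\]

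The key step is to bound $\int_{t_0}^\infty I_q(w_t)^{1/2}\,\d t$. Write $D(t)=D_q(w_t)$. By Lemma~\ref{lem:trace-power-dissipation}, $I_q(w_t)=-D'(t)/q$. The Beckner inequality \eqref{eq:beta-q} applied to $w_t$ gives $D'\le -q\beta_q D$, so $D(t)\le D(t_0)e^{-q\beta_q(t-t_0)}$. I then use Cauchy--Schwarz with the weight $e^{\gamma t}$, where $\gamma=q\beta_q/2$:
\[
\int\sqrt{-D'}\le\Bigl(\int -D'e^{\gamma t}\Bigr)^{1/2}\Bigl(\int e^{-\gamma t}\Bigr)^{1/2}.
\]
Integrating by parts, and using the exponential decay of $D$ and \eqref{eq:single-flow-defect-limit} to handle the boundary term, the first factor is at most $\bigl(2D(t_0)e^{\gamma t_0}\bigr)^{1/2}$. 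The second factor is $\bigl(e^{-\gamma t_0}\cdot 2/(q\beta_q)\bigr)^{1/2}$. Therefore
\[
\int_{t_0}^\infty I_q(w_t)^{1/2}\,\d t\le\frac{2}{q\sqrt{\beta_q}}\,D(t_0)^{1/2},\qquad D(t_0)\le\tau(w^q).
\]
Combining the three bounds, and using $\frac{p-2}{2p}+\frac12=\frac{p-1}{p}$,
\[
|\tau(wz)|\le C\,\tau(w^q)^{\frac{p-1}{p}}\,\|\Gamma(x,x)^{1/2}\|_p,\qquad C=\frac{2^{3/2}}{q\sqrt{\beta_q(q-1)}}.
\]

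To conclude, let $a=\tau(z_+^p)$ and $b=\tau(z_-^p)$, so $a+b=\|z\|_p^p$. Applying the bound to $w=f$ and $w=g$ gives
\[
\|z\|_p^p\le C\|\Gamma(x,x)^{1/2}\|_p\bigl(a^{\frac{p-1}{p}}+b^{\frac{p-1}{p}}\bigr)\le C\,2^{1/p}\,\|z\|_p^{p-1}\,\|\Gamma(x,x)^{1/2}\|_p ,
\]
where the last step is concavity of $s\mapsto s^{(p-1)/p}$. Dividing by $\|z\|_p^{p-1}$ yields \eqref{eq:master-conclusion}.

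I expect the main obstacle to be the technical one: justifying these steps when $w$ itself need not lie in ${\rm dom}(A)$. This requires working on $[t_0,\infty)$, applying the Beckner inequality and Lemma~\ref{new-lemma3.2} to $w_t$ for $t>0$, and checking that the integration by parts has vanishing boundary terms.
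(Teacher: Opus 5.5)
Your proposal is correct and follows essentially the same route as the paper: you pair with $J_p(x-E(x))$ split into $z_\pm^{p-1}$, reconstruct the pairing along the heat flow, apply Lemma~\ref{new-lemma3.2} together with H\"older, and use the Beckner inequality to bound $\int_0^\infty I_q(w_t)^{1/2}\,\d t$, arriving at the identical constant. The differences are cosmetic. First, you treat the two positive flows separately and recover the factor $2^{1/p}$ by concavity at the end, whereas the paper sums $D_t^\pm$, $I_t^\pm$ and $K_t^\pm$. Second, you bound the length integral by Gronwall plus a Cauchy--Schwarz with weight $e^{q\beta_q t/2}$, rather than via $-D_t'/\sqrt{D_t}=-2(\sqrt{D_t})'$; this gives the same bound $\frac{2}{q\sqrt{\beta_q}}\sqrt{D(t_0)}$ and avoids the paper's case distinction about $D_t$ vanishing.
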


\begin{proof}
Let $y=x-E(x)$.  Then $y \in \Aalg_{\rm sa}$  and $E(y)=0$. Moreover, $\Gamma(y,y)=\Gamma(x,x)$. Denote by
\begin{equation}
 M=\tau(|y|^p),
 \qquad
 H=\|\Gamma(y,y)^{1/2}\|_p.
 \label{eq:M-H}
\end{equation}
It suffices to consider $M > 0$ and $H < \infty$. We devide the proof into three steps.

\medskip
\noindent\emph{\textbf{Step 1}:}
Define
\iffalse
\begin{equation*}
 a_+=y_+^{p-1},
 \qquad
 a_-=y_-^{p-1},
 \qquad
 J_p(y)=a_+-a_-=\sgn(y)|y|^{p-1},
 \label{eq:dual-seed}
\end{equation*}
\fi
\begin{equation*}
 f_t^+=T_t (y_+^{p-1}),
 \quad
 f_t^-=T_t(y_-^{p-1}).
 \label{eq:positive-flows}
\end{equation*}
Then $f_t^{\pm} \in \M_+ \cap {\rm dom}(A)$ for $t > 0$. Define
\begin{equation*}
 D_t^\pm=D_q(f_t^\pm),
 \quad
 I_t^\pm=I_q(f_t^\pm),
 \quad
 D_t=D_t^++D_t^-,
 \quad I_t=I_t^++I_t^-.
 \label{eq:Dt-It}
\end{equation*}
By Lemma~\ref{lem:trace-power-dissipation}, for almost every $t\in (0,\infty)$, one has
\begin{equation}
 - \frac{\d}{\d t} D_t^\pm=qI_t^\pm,
 \quad
 - \frac{\d}{\d t} D_t=qI_t,
 \label{eq:D-derivative}
\end{equation}
and
\begin{equation}
 \lim_{t\to\infty}D_t^\pm=0,
 \quad
 \lim_{t\to\infty}D_t=0.
 \label{eq:D-infinity}
\end{equation}
By the $q$-Beckner inequality \eqref{eq:beta-q},
\begin{equation}
 I_t\geq\beta_qD_t.
 \label{eq:I-lower-D}
\end{equation}
From the definition of $D_t$, it is clear that $D_t$ is nonincreasing.  Fix $0<\delta<R$ in an interval on which $D_t>0$. Then by \eqref{eq:D-derivative} and \eqref{eq:I-lower-D}, 
\begin{align*}
 \int_\delta^R\sqrt{I_t}\,\dd t
 &=\frac1q\int_\delta^R\frac{-D_t'}{\sqrt{I_t}}\,\dd t \leq\frac1{q\sqrt{\beta_q}}
 \int_\delta^R\frac{-D_t'}{\sqrt{D_t}}\,\dd t\\
 &=\frac{2}{q\sqrt{\beta_q}}
   \bigl(\sqrt{D_\delta}-\sqrt{D_R}\bigr).
\end{align*}
If $D_t$ first vanishes at time $t_0$, then it is zero for all
$t\geq t_0$, and \eqref{eq:D-derivative} implies $I_t=0$ for almost every such
$t$. Thus the preceding estimate may be integrated up to $t_0$.  If no zero
occurs, let $R\to\infty$ and use \eqref{eq:D-infinity}.  Finally let
$\delta\downarrow0$, use continuity of $D_t$ at zero, and apply monotone
convergence to the nonnegative integral.  In both cases,
\begin{equation*}
 \int_0^\infty\sqrt{I_t}\,\dd t
 \leq\frac{2}{q\sqrt{\beta_q}}\sqrt{D_0}.
 \label{eq:length-before-D0}
\end{equation*}
Since $E(y_\pm^{p-1})\geq0$, one has
\[
D_q(y_\pm^{p-1}) = \tau( y_\pm^{(p-1)q} ) - \tau\sk{ (E(y_\pm^{p-1}))^q } \leq \tau( y_\pm^{p} ) .
\] 
By Lemma~\ref{lem:dual-seed},
\begin{align*}
 D_0 \leq \tau(y_+^p)+\tau(y_-^p) = M,
\end{align*}
and consequently
\begin{equation}
 \int_0^\infty\sqrt{I_t}\,\dd t
 \leq\frac{2}{q\sqrt{\beta_q}}M^{1/2}.
 \label{eq:length-final}
\end{equation}

\smallskip
\noindent\emph{\textbf{Step 2}:}
Define
\[
 K_t^\pm= \frac{2}{q-1}\tau\sk{ (f_t^\pm)^{2-q} \Gamma(y, y) },
 \quad K_t=K_t^++K_t^-.
\]
By \eqref{eq:weighted-cauchy} and the Cauchy--Schwarz inequality, one has
\begin{align}
 |\Eform(f_t^+-f_t^-,y)|
 \leq
 \sqrt{I_t^+K_t^+}+\sqrt{I_t^-K_t^-}
  \leq\sqrt{I_tK_t}.
 \label{eq:two-sign-Cauchy}
\end{align}
By the H\"older inequality,
\begin{equation}
 \tau((f_t^\pm)^{2-q}\Gamma(y,y)) \leq
 \|(f_t^\pm)^{2-q}\|_{p/(p-2)}
 \|\Gamma(y,y)\|_{p/2} =\|f_t^\pm\|_q^{2-q} H^2 \label{eq:K_t-1}.
\end{equation}
Since $T_t$ is contractive on $L_q(\M)$, we have
\[
 \|f_t^\pm\|_q^{2-q}
 \leq\| y_\pm^{p-1} \|_q^{2-q} = \tau(y_\pm^p)^{(2 - q)/q}
 =\tau(y_\pm^p)^{(p-2)/p}.
\]
Note that for $0< (p-2)/p <1$, $u\geq 0$ and $v\geq 0$
\[
 u^{(p-2)/p}+v^{(p-2)/p} \leq 2^{2/p} (u+v)^{(p-2)/p} .
\]
Substituting into \eqref{eq:K_t-1} yields
\begin{equation}
K_t
 \leq \frac{2^{1+2/p}}{q-1} \tau( y_+^p + y_-^p )^{(p-2)/p} H^2 =
 \frac{2^{1+2/p}}{q-1}
 M^{(p-2)/p}H^2.
 \label{eq:Kt-final}
\end{equation}

\smallskip
\noindent\emph{\textbf{Step 3}:} Recall $J_p(y) = \sgn(y)|y|^{p-1}$. By Lemma~\ref{lem:dual-seed},
$\tau(J_p(y)y)=\tau(|y|^p)=M$.  Moreover, since $E(y) = 0$, 
\begin{equation}
 \tau(E(J_p(y)) y)=\tau \mk{E(J_p(y)) E(y) }=0.
 \label{eq:fixed-dual-pairing-zero}
\end{equation}
For $0<\varepsilon<R$,
\begin{align*}
 \tau((T_\varepsilon J_p(y)-T_RJ_p(y))y)
 &=\int_\varepsilon^R\tau(AT_tJ_p(y)y)\,\dd t\\
 &=\int_\varepsilon^R\Eform(T_tJ_p(y),y)\,\dd t.
\end{align*}
\iffalse
Indeed, $J_p(y)\in L_2$, and the spectral bound
$\|AT_t\|_{2\to2}\leq\sup_{r\geq0}re^{-tr}$ places
$T_tJ_p(y)$ in $\Dom(A)$ for positive time.  The identity therefore first
holds as an $L_2$ Bochner integral and then after pairing with the bounded core
element $y$.  
\fi
As $R\to\infty$, $T_RJ_p(y)\to E(J_p(y))$ in $L_2(\M)$, and also in $L_q(\M)$.  Observe that $T_\varepsilon J_p(y)\to J_p(y)$ in $L_q(\M)$ as
$\varepsilon\downarrow0$.  
% Noncommutative H\"older makes both endpoint pairings continuous because $1/p+1/q=1$.  
Together with \eqref{eq:dual-seed-pairing} and
\eqref{eq:fixed-dual-pairing-zero}, the left side equals
\[
\lim_{\varepsilon\downarrow0, R\to\infty} \tau((T_\varepsilon J_p(y)-T_RJ_p(y))y) = \tau(J_p(y)y) - \tau(E (J_p(y) )y) = M.
\]
Since $T_tJ_p(y)=f_t^+-f_t^-$, one has
\begin{equation}
 M=\int_0^\infty\Eform(f_t^+-f_t^-,y)\,\dd t,
 \label{eq:heat-reconstruction}
\end{equation}
where the integral is absolutely convergent. Indeed, it follows from
\eqref{eq:two-sign-Cauchy}, \eqref{eq:length-final}, and
\eqref{eq:Kt-final}. Finally, combining \eqref{eq:heat-reconstruction},
\eqref{eq:two-sign-Cauchy}, \eqref{eq:length-final}, and
\eqref{eq:Kt-final}, we obtain
\begin{align*}
 M
 \leq \sk{ \sup_{t > 0} K_t }^{1/2}
       \int_0^\infty\sqrt{I_t}\,\dd t \leq
 \frac{2^{3/2+1/p}}
 {q\sqrt{\beta_q(q-1)}}
 M^{(p-1)/p}H.
\end{align*}
This completes the proof of \eqref{eq:master-conclusion}.
\end{proof}

By a standard argument, Theorem~\ref{thm:single-defect} extends to general non-selfadjoint elements. We restate this result as follows.
\begin{theorem}\label{thm:non-selfadjoint}
Let $p>2$ and $q = p/(p - 1)$.
Assume that \eqref{eq:beta-q} holds with $\beta_q>0$.  Then every $x \in \Aalg$ with $x^* \in \Aalg$ satisfies
\[
 \|x-E(x)\|_p
 \leq
 \frac{2^{5/2-1/p}}
 {q\sqrt{\beta_q(q-1)}}
 \max\lk{ \|\Gamma(x,x)^{1/2}\|_p, \| \Gamma(x^*, x^*)^{1/2}\|_p }.
 \]
\end{theorem}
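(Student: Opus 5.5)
We reduce to the selfadjoint case, Theorem~\ref{thm:single-defect}, by splitting $x$ into its real and imaginary parts,
\[
a=\frac{x+x^*}{2},\qquad b=\frac{x-x^*}{2i},\qquad x=a+ib .
\]
Since $x,x^*\in\Aalg$, both $a$ and $b$ are selfadjoint. They lie in $\Aalg$ once we check that $a^2,b^2\in{\rm dom}(A)$; I would justify this by expanding $a^2=\frac14(x^2+xx^*+x^*x+(x^*)^2)$. If only $x^*x$ and $xx^*$ are known to lie in ${\rm dom}(A)$ while $x^2$ is not, I would read the standing domain convention so that the relevant products are included; this is a bookkeeping point only. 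Because $E$ is linear and $*$-preserving, we have $x-E(x)=(a-E(a))+i(b-E(b))$. The triangle inequality in $L_p(\M)$ and Theorem~\ref{thm:single-defect} then give
\[
\|x-E(x)\|_p\le C_{p}\bigl(\|\Gamma(a,a)^{1/2}\|_p+\|\Gamma(b,b)^{1/2}\|_p\bigr),
\qquad C_p=\frac{2^{3/2+1/p}}{q\sqrt{\beta_q(q-1)}}.
\]

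The key step is to bound $\|\Gamma(a,a)^{1/2}\|_p$ and $\|\Gamma(b,b)^{1/2}\|_p$ by the gradients of $x$ and $x^*$. Sesquilinearity of $\Gamma$ gives
\[
\Gamma(a,a)=\tfrac14\bigl(\Gamma(x,x)+\Gamma(x^*,x^*)+\Gamma(x,x^*)+\Gamma(x^*,x)\bigr).
\]
Since $\Gamma$ is a positive sesquilinear form (it is completely positive), the $2\times2$ matrix $[\Gamma(z_i,z_j)]$ with $z_1=x$, $z_2=x^*$ is positive. This yields the operator inequality
\[
\Gamma(x+x^*,x+x^*)\le 2\bigl(\Gamma(x,x)+\Gamma(x^*,x^*)\bigr),
\]
which one checks by noting $\Gamma(x-x^*,x-x^*)\ge0$ and adding it to the expansion of $\Gamma(x+x^*,x+x^*)$. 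Hence $\Gamma(a,a)\le\frac12(\Gamma(x,x)+\Gamma(x^*,x^*))$, and the same bound holds for $\Gamma(b,b)$.

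Next we pass to $L_p$ norms of square roots. Since $0\le u\le v$ implies $\|u^{1/2}\|_p=\|u\|_{p/2}^{1/2}\le\|v\|_{p/2}^{1/2}$, and $p/2\ge1$ so the triangle inequality holds in $L_{p/2}$, we get
\[
\|\Gamma(a,a)^{1/2}\|_p\le\Bigl(\tfrac12\bigl(\|\Gamma(x,x)\|_{p/2}+\|\Gamma(x^*,x^*)\|_{p/2}\bigr)\Bigr)^{1/2}\le m,
\]
where $m=\max\{\|\Gamma(x,x)^{1/2}\|_p,\|\Gamma(x^*,x^*)^{1/2}\|_p\}$. The same bound holds for $b$.

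Combining the two steps gives $\|x-E(x)\|_p\le 2C_pm$, with constant
\[
\frac{2^{5/2+1/p}}{q\sqrt{\beta_q(q-1)}}.
\]
This is slightly worse than the stated constant, whose exponent is $5/2-1/p$, i.e.\ smaller by a factor $2^{2/p}$. The main obstacle is recovering this factor. I expect it can be done by not applying Theorem~\ref{thm:single-defect} to $a$ and $b$ as a black box. Instead I would redo its Step~2 for the pair: there the factor $2^{2/p}$ came from $u^{(p-2)/p}+v^{(p-2)/p}\le 2^{2/p}(u+v)^{(p-2)/p}$ applied to the positive and negative parts, and in the combined estimate this loss should be absorbed. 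Alternatively I would use the sharper combination $\|a-E(a)\|_p+\|b-E(b)\|_p$ together with the $\ell_2$-type bound $\|\Gamma(a,a)^{1/2}\|_p^2+\|\Gamma(b,b)^{1/2}\|_p^2\le\|\Gamma(x,x)^{1/2}\|_p^2+\|\Gamma(x^*,x^*)^{1/2}\|_p^2$, which follows from $\Gamma(a,a)+\Gamma(b,b)=\frac12(\Gamma(x,x)+\Gamma(x^*,x^*))$. Then Cauchy--Schwarz in the form $\alpha+\beta\le\sqrt2(\alpha^2+\beta^2)^{1/2}$ gives a factor $\sqrt2\cdot\sqrt2=2$, with the power of $2$ involving $1/p$ to be tracked. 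I would first try to match the exponent exactly this way, and otherwise accept the constant off by the factor $2^{2/p}\le2$, which still gives the same qualitative statement.
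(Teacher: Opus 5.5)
\textbf{There is a genuine gap: your argument proves a weaker constant than the one stated.} Your reduction is sound, but it gives $\|x-E(x)\|_p\le 2C_p\,m$, where $C_p=2^{3/2+1/p}/(q\sqrt{\beta_q(q-1)})$ is the constant of Theorem~\ref{thm:single-defect}. The statement requires $2^{1-2/p}C_p=2^{5/2-1/p}/(q\sqrt{\beta_q(q-1)})$, and accepting a constant off by $2^{2/p}$ does not prove it.

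Neither proposed repair closes this:
\begin{itemize}
\item Your Cauchy--Schwarz variant returns exactly $\sqrt2\cdot\sqrt2=2$. Your $\ell_2$ gradient bound is at most $2m^2$, so no power involving $1/p$ appears.
\item The loss has nothing to do with the factor $2^{2/p}$ in Step~2 of Theorem~\ref{thm:single-defect}. That factor is intrinsic to the selfadjoint estimate and survives unchanged inside $C_p$ in the correct argument.
\end{itemize}
The loss actually comes from combining the real and imaginary parts through the triangle inequality. Even if you sharpen the gradient side to $g_1^p+g_2^p\le m^p$, with $g_i=\|\Gamma(y_i,y_i)^{1/2}\|_p$, the bound $\|y\|_p\le\|y_1\|_p+\|y_2\|_p$ still only gives $2^{1-1/p}C_p$.

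\textbf{The missing idea is to combine the two parts in $\ell_p$ on both sides.} Put $y=x-E(x)$, $y_1=(y+y^*)/2$ and $y_2=(y-y^*)/(2i)$; note $\Gamma(y,y)=\Gamma(x,x)$.
\begin{itemize}
\item On the left, apply Clarkson's inequality for $p\ge2$ to $y_1$ and $iy_2$. Since $y_1-iy_2=y^*$ has the same $L_p$-norm as $y$, you get $\|y\|_p^p\le 2^{p-2}(\|y_1\|_p^p+\|y_2\|_p^p)$.
\item On the right, start from the identity $\Gamma(y_1,y_1)+\Gamma(y_2,y_2)=\frac12(\Gamma(y,y)+\Gamma(y^*,y^*))$, which you already have. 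Add McCarthy's inequality $\tau(P^{p/2})+\tau(Q^{p/2})\le\tau((P+Q)^{p/2})$ for positive $P,Q$, and convexity of $X\mapsto\tau(X^{p/2})$. Together these give $g_1^p+g_2^p\le m^p$.
\end{itemize}
Applying Theorem~\ref{thm:single-defect} to each $y_i$ then yields $\|y\|_p^p\le 2^{p-2}C_p^p m^p$, that is, $\|y\|_p\le 2^{1-2/p}C_p\,m$. This is exactly the stated constant.

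Your caveat about whether $x^2\in{\rm dom}(A)$ is a point the paper's proof also passes over silently; it is not the issue here.
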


\begin{proof}
Let $y = x - E(x)$ and
\[ y_1 = \frac{y + y^*}{2}, \quad y_2 = \frac{y - y^*}{2i}. \]
Then $y_i \in \Aalg_{\rm sa}$ and $E(y_i) = 0$, $i = 1,2$. By the Clarkson inequality,
\[
\| y_1 + iy_2 \|_p^p + \| y_1 - iy_2 \|_p^p \leq 2^{p - 1}(\| y_1 \|_p^p + \| y_2 \|_p^p),
\]
which implies
\begin{equation}\label{eq:y-1-2}
      \| y \|_p^p \leq 2^{p - 2}(\| y_1 \|_p^p + \| y_2 \|_p^p).
\end{equation}
On the other hand, since $\Gamma$ is sesquilinear,
\[
\Gamma(y_1, y_1) + \Gamma(y_2, y_2) = \frac{1}{2} \sk{ \Gamma(y, y) + \Gamma(y^*, y^*) }.
\]
Therefore by the McCauchy trace inequality and the convexity inequality \cite[Lemma~2.5-2.6]{McCauchy67}, we obtain
\begin{align*}
\tau\sk{ \Gamma(y_1, y_1)^{p/2} } + \tau\sk{ \Gamma(y_2, y_2)^{p/2} }
\leq & \tau\mk{ \sk{ \Gamma(y_1, y_1) + \Gamma(y_2, y_2) }^{p/2} } \\
= & \tau\mk{ \sk{ \frac{\Gamma(y, y) + \Gamma(y^*, y^*)}{2} }^{p/2} } \\
\leq & \frac{1}{2} \tau\sk{ \Gamma(y, y)^{p/2} + \Gamma(y^*, y^*)^{p/2} } \\
\leq & \max\lk{ \|\Gamma(x,x)^{1/2}\|_p, \| \Gamma(x^*, x^*)^{1/2} \|_p }^p.
\end{align*}
By applying Theorem \ref{thm:single-defect} to $y_1$ and $y_2$ seperately, and substituting it into \eqref{eq:y-1-2}, we obtain the desired result. 
\end{proof}

\bigskip

\section{$L_p$-Poincar\'{e} inequalities from PI and MLSI---Tracial case}\label{sec:PI-MLSI-to-p}
To show Theorem \ref{Main-p-PI}, it remains to derive Beckner constants of Beckner inequalities from PI and MLSI by Theorem \ref{thm:single-defect}. For the proof of $L_p$-Poincar\'{e} inequalities from PI, the following noncommutative Stroock-Varopouls inequality will be used, and the detailed proof can be found in \cite{Xiong2017}.
\begin{lemma}
\label{lem:SV}
For $1<q<2$ and every $x\in \M_+\cap {\rm dom}(A)$,
\begin{equation}
 \Eform(x^{q/2},x^{q/2})
 \leq\frac{q^2}{4(q-1)}I_q(x).
 \label{eq:SV}
\end{equation}
\end{lemma}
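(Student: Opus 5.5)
We work first at finite time $h>0$, using the bimodule $\H_h$ and the chain rule \eqref{eq:doi-chain-rule}, and then let $h\to0$ as in the proof of Lemma~\ref{new-lemma3.2}. Fix $x\in\M_+\cap{\rm dom}(A)$ and assume first that $x\geq\varepsilon\1$, so that the joint spectrum of $(L_x,R_x)$ lies in a compact square inside $(0,\infty)^2$. Put $\psi(t)=t^{q/2}$ and $\phi(t)=t^{q-1}$. The chain rule gives
\[
v_h(x^{q/2})=\psi^{[1]}(L_x,R_x)v_h(x),\qquad v_h(x^{q-1})=\phi^{[1]}(L_x,R_x)v_h(x).
\]
Using \eqref{eq:v-Eform}, this yields
\[
\Eform_h(x^{q/2},x^{q/2})=\lara{v_h(x),\psi^{[1]}(L_x,R_x)^2v_h(x)}_h,\qquad I_{q,h}(x)=\lara{v_h(x),\phi^{[1]}(L_x,R_x)v_h(x)}_h.
\]
By joint spectral calculus for the commuting selfadjoint pair, the operator inequality
\[
\psi^{[1]}(L_x,R_x)^2\leq\tfrac{q^2}{4(q-1)}\phi^{[1]}(L_x,R_x)
\]
will follow from the scalar inequality
\[
\Bigl(\frac{s^{q/2}-t^{q/2}}{s-t}\Bigr)^2\leq\frac{q^2}{4(q-1)}\,\frac{s^{q-1}-t^{q-1}}{s-t},\qquad s,t>0.
\]
This gives $\Eform_h(x^{q/2},x^{q/2})\leq\frac{q^2}{4(q-1)}I_{q,h}(x)$.

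The scalar inequality is the classical Stroock--Varopoulos estimate. To prove it, write
\[
s^{q/2}-t^{q/2}=\tfrac q2\int_t^s u^{q/2-1}\,\d u .
\]
Cauchy--Schwarz then gives
\[
(s^{q/2}-t^{q/2})^2\leq\tfrac{q^2}{4}(s-t)\int_t^s u^{q-2}\,\d u=\tfrac{q^2}{4(q-1)}(s-t)(s^{q-1}-t^{q-1}),
\]
and dividing by $(s-t)^2$ gives the claim. The diagonal $s=t$ follows by continuity, or by the direct check $(q/2)^2s^{q-2}\leq\frac{q^2}{4(q-1)}(q-1)s^{q-2}$.

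For general $x\geq0$, we replace $x$ by $x_\varepsilon=x+\varepsilon\1$. Since $v_h(x_\varepsilon)=v_h(x)$, we have $I_{q,h}(x_\varepsilon)=\tau(A_h(x)x_\varepsilon^{q-1})\to\tau(A_h(x)x^{q-1})$. Also, $x_\varepsilon^{q/2}\to x^{q/2}$ in norm, and $\Eform_h$ is bounded on $L_2$ with bound $2/h$, so $\Eform_h(x_\varepsilon^{q/2},x_\varepsilon^{q/2})\to\Eform_h(x^{q/2},x^{q/2})$. Hence the finite-$h$ inequality holds for all $x\in\M_+$.

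The main obstacle is the limit $h\to0$. As in Lemma~\ref{new-lemma3.2}, $I_{q,h}(x)\to I_q(x)$ because $A_hx\to Ax$ in $L_2$. On the other side, $\Eform_h(z,z)=\frac1h\tau(z(\id-T_h)z)$ increases to $\Eform(z,z)$ as $h\downarrow0$, with value $+\infty$ if $z\notin{\rm dom}(A^{1/2})$; this is the spectral monotonicity of $(1-e^{-h\lambda})/h$. Taking $z=x^{q/2}$ and letting $h\to0$ gives
\[
\Eform(x^{q/2},x^{q/2})=\lim_{h\to0}\Eform_h(x^{q/2},x^{q/2})\leq\frac{q^2}{4(q-1)}I_q(x).
\]
In particular $x^{q/2}\in{\rm dom}(A^{1/2})$, which completes the argument.
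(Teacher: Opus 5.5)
Your proof is correct. The paper gives no argument of its own for Lemma~\ref{lem:SV}; it only cites \cite{Xiong2017}. Your route is therefore genuinely different: it makes the lemma self-contained using the paper's own machinery.

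\textbf{What your argument does.} The finite-time bimodule $\H_h$ and the chain rule \eqref{eq:doi-chain-rule} turn both $\Eform_h(x^{q/2},x^{q/2})$ and $I_{q,h}(x)$ into quadratic forms $\langle v_h(x), F(L_x,R_x)v_h(x)\rangle_h$. The operator inequality then reduces, by joint spectral calculus, to the classical scalar Stroock--Varopoulos inequality, which you verify by Cauchy--Schwarz. The regularization $x\mapsto x+\varepsilon\1$ and the passage $h\to0$ parallel the proof of Lemma~\ref{new-lemma3.2}. The monotone convergence $\Eform_h(z,z)\uparrow\Eform(z,z)$, coming from the monotonicity of $(1-e^{-h\lambda})/h$ in $h$, correctly handles the left-hand side.

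\textbf{What each approach buys.} The citation keeps the paper short. Your version keeps Section~\ref{sec:heat-flow} methodologically uniform. It also yields, as a byproduct, the form-domain membership $x^{q/2}\in{\rm dom}(A^{1/2})$. Proposition~\ref{prop:gap-to-q} uses this tacitly when it applies \eqref{eq:spectral-gap} to $g=x^{q/2}$.

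\textbf{A further observation.} Your argument works verbatim for every $q>1$. The scalar inequality holds for all $q>1$, and for $q\geq2$ the functions $t^{q/2}$ and $t^{q-1}$ are already $C^1$ on $[0,\infty)$, so no regularization is needed there.
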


\subsection{$L_p$-Poincar\'{e} inequalities from PI}
We first combine \eqref{eq:spectral-gap} with the above noncommutative Stroock-Varopouls inequality.

\begin{proposition}
\label{prop:gap-to-q}
Assume that \eqref{eq:spectral-gap} holds. Let $\lambda$ be the spectral gap. Then for every
$1<q<2$ and every
$x\in \M_+ \cap {\rm dom}(A)$,
\begin{equation}
 \frac{4(q-1)\lambda}{q^2}D_q(x)\leq I_q(x).
 \label{eq:gap-q-defect}
\end{equation}
Moreover, the constant $4(q-1)\lambda/q^2$ is best possible.
\end{proposition}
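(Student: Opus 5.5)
The plan is to apply the spectral gap inequality \eqref{eq:spectral-gap} to the element $z=x^{q/2}$ and then use Lemma~\ref{lem:SV} to control the Dirichlet form. Concretely, for $x\in\M_+\cap{\rm dom}(A)$ we aim for the chain
\[
\lambda\,\|x^{q/2}-E(x^{q/2})\|_2^2\leq \Eform(x^{q/2},x^{q/2})\leq \frac{q^2}{4(q-1)}I_q(x).
\]
This needs $x^{q/2}\in{\rm dom}(A^{1/2})$. If that is not directly available from the proof of Lemma~\ref{lem:SV}, we first treat $x_\varepsilon=x+\varepsilon\1$. For $x_\varepsilon$, the closure of $\Aalg_\Eform$ under $C^1$-calculus applies, and $t\mapsto t^{q/2}$ is $C^1$ on the spectrum. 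We then let $\varepsilon\to0$, noting that $I_q(x_\varepsilon)=\tau(A(x)x_\varepsilon^{q-1})\to I_q(x)$ and $D_q(x_\varepsilon)\to D_q(x)$.

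It then remains to prove the comparison $D_q(x)\leq\|x^{q/2}-E(x^{q/2})\|_2^2$. Since $\|z-E(z)\|_2^2=\tau(z^2)-\tau(E(z)^2)$ and $\tau(z^2)=\tau(x^q)$, this reduces to
\[
\tau\bigl(E(x^{q/2})^2\bigr)\leq\tau\bigl(E(x)^q\bigr).
\]
Write $w=x^{q/2}\geq0$ and $r=2/q\in(1,2)$, so that $x=w^r$. The function $t\mapsto t^r$ is operator convex on $[0,\infty)$ for $r\in[1,2]$. The Jensen--Choi inequality for the unital positive map $E$ then gives the operator inequality $E(w)^r\leq E(w^r)=E(x)$. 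Because $t\mapsto t^q$ is increasing, $\tau$-monotonicity ($0\leq a\leq b\Rightarrow\tau(a^q)\leq\tau(b^q)$, via singular values) yields $\tau(E(w)^{rq})=\tau(E(w)^2)\leq\tau(E(x)^q)$, which is the claim. Combining the two displays gives \eqref{eq:gap-q-defect}.

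For optimality, a small perturbation $x=\1+\varepsilon y$, with $y$ a gap eigenvector, only gives the ratio $I_q/D_q\to 2\lambda/q$. This exceeds $4(q-1)\lambda/q^2$, so sharpness must come from a genuinely nonlinear regime. I would test the simplest semigroups $A=\lambda({\rm id}-E)$ on a two-point space $\{1,2\}$ with weights $(\pi,1-\pi)$, where \eqref{eq:spectral-gap} is an equality for every mean-zero element. With $x=a\1_{\{1\}}+b\1_{\{2\}}$, both $D_q$ and $I_q$ are explicit functions of $(a,b,\pi)$, and I would minimize $I_q/D_q$ over these parameters, for instance $b\to0$ with $\pi$ tuned. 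The hope is that along some limit the Stroock--Varopoulos step and the Jensen step simultaneously become asymptotically tight, giving a ratio tending to $4(q-1)\lambda/q^2$. Exhibiting this extremal family is the step I expect to be the main obstacle, since the two inequalities in the chain degenerate in different regimes. If two-point spaces fail, I would next try $x$ close to a projection in a larger symmetric example, such as a complete graph with many vertices.
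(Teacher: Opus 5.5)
Your proof of \eqref{eq:gap-q-defect} is correct and follows the paper's argument. You apply \eqref{eq:spectral-gap} to $x^{q/2}$, bound $\Eform(x^{q/2},x^{q/2})$ by Lemma~\ref{lem:SV}, and prove $D_q(x)\le\|x^{q/2}-E(x^{q/2})\|_2^2$ by operator Jensen. Using operator convexity of $t^{2/q}$ and then monotonicity of $\tau(\cdot^q)$ is equivalent to the paper's use of operator concavity of $t^{q/2}$, and the $\varepsilon$-regularization is a harmless extra.

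The genuine gap is the sharpness claim. You exhibit no extremal family, and the candidates you propose provably cannot supply one. Every Markov generator on a two-point space, and the (lazy or non-lazy) Laplacian of a complete graph, has the form $A=\lambda(\mathrm{id}-E)$ with $\lambda$ its spectral gap. For such generators
\[
I_q(x)-\lambda D_q(x)=\lambda\bigl(\tau(E(x)^q)-\tau(E(x)E(x^{q-1}))\bigr)\geq 0,
\]
because $E(x^{q-1})\le E(x)^{q-1}$ by operator concavity of $t^{q-1}$. Hence $\beta_q\ge\lambda>4(q-1)\lambda/q^2$ there, however $(a,b,\pi)$ are tuned. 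The projection heuristic fails for the same reason. For any semigroup and any projection $P\in{\rm dom}(A)$, one has $I_q(P)=\Eform(P,P)\ge\lambda\|P-E(P)\|_2^2\ge\lambda D_q(P)$, since $0\le E(P)\le\1$ forces $E(P)^q\ge E(P)^2$. Your diagnosis is accurate: for jump-type generators the Stroock--Varopoulos step is tight only near the diagonal. That is exactly where the Jensen step loses the factor $2(q-1)/q$.

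The missing idea is to use a diffusion. There the chain rule makes Lemma~\ref{lem:SV} an identity, since $\Gamma(x^{q/2},x^{q/2})=\frac{q^2}{4}x^{q-2}\Gamma(x,x)$ and $\Gamma(x,x^{q-1})=(q-1)x^{q-2}\Gamma(x,x)$. You also need the spectral gap to sit at the bottom of a continuous spectrum with exponentially growing approximate eigenfunctions. Then $\tau(x^q)$ blows up while the mean terms stay bounded, and the Jensen and PI steps become tight simultaneously. The paper takes $\mathscr L f=f''-2f'$ on $[0,\infty)$ with $\dd\mu(t)=2e^{-2t}\dd t$, whose spectral gap is $1$, and $f_a(t)=e^{at}$ with $0<a<2/q$. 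For these,
\[
D_q(f_a)=\frac{2}{2-qa}-\Bigl(\frac{2}{2-a}\Bigr)^q,\qquad I_q(f_a)=\frac{2(q-1)a^2}{2-qa},
\]
so $I_q(f_a)/D_q(f_a)\to 4(q-1)/q^2$ as $a\uparrow 2/q$. The $f_a$ are unbounded, so one should truncate them; this does not affect the limit. A Gaussian Ornstein--Uhlenbeck semigroup would not work either: Beckner's inequality gives $\beta_q=2/q$ there. The extremal measure must have only a spectral gap, with exponential rather than Gaussian tails.
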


\begin{proof}
Let $g=x^{q/2}$.  Since $q/2\in(0,1)$, the function
$u\mapsto u^{q/2}$ is operator concave.  The operator Jensen inequality for the unital
completely positive map $E$ yields
\begin{equation*}
E(g) = E(x^{q/2})\leq(Ex)^{q/2}.
 \label{eq:concave-Jensen}
\end{equation*}
\iffalse
If $0\leq a\leq b$, then
\[
 \tau(b^2-a^2)=\tau((b-a)(b+a))
 =\tau((b-a)^{1/2}(b+a)(b-a)^{1/2})\geq0.
\]
Applying this to \eqref{eq:concave-Jensen} yields
\begin{equation*}
 \tau((Eg)^2)\leq\tau((Ef)^q).
 \label{eq:squared-Jensen-trace}
\end{equation*}
\fi
Consequently,
\begin{align*}
	\tau((Ex)^q)=\| (Ex)^{q/2}\|_2^2\geq \| E(x^{q/2})\|_2^2
\end{align*}
and
\begin{align*}
 D_q(x)
=\tau(g^2)-\tau((Ex)^q)
\leq\tau(g^2)-\tau((Eg)^2)
 =\|g-Eg\|_2^2.
 \label{eq:Dq-by-L2-variance}
\end{align*}
\iffalse
The final equality follows because $E$ is the $L_2$-orthogonal projection:
\begin{align*}
 \|g-Eg\|_2^2
 &=\tau(g^2)-2\tau(gEg)+\tau((Eg)^2)\\
 &=\tau(g^2)-\tau((Eg)^2),
\end{align*}
where $\tau(gEg)=\tau(E(gEg))=\tau((Eg)^2)$.
\fi
Applying \eqref{eq:spectral-gap} to $g$ and Lemma~\ref{lem:SV}, one has
\begin{align*}
 \lambda D_q(x)
 \leq\lambda\|g-Eg\|_2^2\leq\Eform(g,g)\leq\frac{q^2}{4(q-1)}I_q(x),
\end{align*}
which is exactly \eqref{eq:gap-q-defect}. 
% The form-domain membership of $g=f^{q/2}$, including the case in which $f$ has a kernel, was proved in Lemma~\ref{lem:SV} by closedness.

Next we show that the constant $4(q-1)\lambda/q^2$ is best possible, namely, there exists a special semigroup attaining this constant. Consider the diffusion semigroup on $\real_+ = [0, \infty)$:
\[
\mathscr L (f) = f'' - 2f', \quad T_t(f) = e^{t\mathscr L}.
\]
The invariant measure and Dirichlet form are
\[
\d \mu(t) = 2e^{-2t} \,\d t, \quad \Eform(f, g) = \int_{\real_+} f'(t) g'(t) \,\d \mu(t).
\]
Then $(T_t)$ is a quantum Markov semigroup on $L_\infty(\real_+, \d \mu)$ with the fixed-point subalgebra $\com$. One can check that the spectral gap $\lambda = 1$.

For $0 < a < 2/q$, consider the function $f_a(t) = e^{at}$. Then
\[
\mu(f_a^q)  = \int_{\real_+} f_a(t)^q \,\d \mu(t) = \frac{2}{2 - qa}, \quad \mu(f_a) = \frac{2}{2 - a}.
\]
Hence
\[
D_q(f_a) = \mu(f_a^q) - \mu(f_a)^q = \frac{2}{2 - qa} - \sk{ \frac{2}{2 - a} }^q,
\]
and
\[
I_q(f_a) = \Eform(f_a, f_a^{q-1}) = (q-1) \frac{2a^2}{2 - qa}.
\]
Therefore, the beckner constant
\[
\beta_q \leq \frac{ I_q(f_a) }{ D_q(f_a) } = (q-1)a^2 \frac{ 1 }{ 1 - \frac{2 - qa}{2}\sk{ \frac{2}{2-a} }^q }.
\]
By letting $a \to 2/q$, we obtain
\[
\beta_q \leq \frac{4(q - 1)}{q^2}.
\]
However \eqref{eq:gap-q-defect} implies $ \beta_q \geq 4(q-1)/q^2 $, which proves the sharpness.
\end{proof}

\begin{proof}[Proof of \eqref{main-1} in Theorem \ref{Main-p-PI}]
For $p > 2$ and $x \in \Aalg_{\rm sa}$, let $q = p/(p-1)$. By Proposition \ref{prop:gap-to-q}, we can choose $\beta_q=4(q-1)\lambda/q^2$.  Hence, by Theorem \ref{thm:single-defect}, we obtain
   \[
   \| x - E(x) \|_p \leq \frac{ 2^{3/2 + 1/p}  }{ q\sqrt{ 4(q - 1)^2\lambda/q^2 } } \| \Gamma(x, x)^{1/2} \|_p = \frac{ 2^{1/2 + 1/p}(p - 1) }{\sqrt{\lambda}} \| \Gamma(x, x)^{1/2} \|_p.
   \]
   Since $2^{1/2 + 1/p}(p - 1) \leq \sqrt{2} p$ for $p \geq 2$, we finish the proof.
\end{proof}

\subsection{$L_p$-Poincar\'{e} inequalities from MLSI}

The key idea is the point that $D_q(x)$ can be written as an integral representation of relative entropy. This integral representation relies on the following elementay fact.  
% It avoids both a diffusion chain rule and any non-self-adjoint $\M$-valued expression such as $\Gamma(f^{q-1},f)$.

\begin{lemma}
\label{lem:shifted-entropy}
Let $1<q<2$. For $s>0$ and $u\geq0$, define
\begin{equation*}
 \Psi_s(u)
 =(u+s)\log(u+s)-s\log s-u(1+\log s).
 \label{eq:Psi}
\end{equation*}
Then
\begin{equation}
 u^q=c_q\int_0^\infty\Psi_s(u)s^{q-2}\,\dd s,
 \label{eq:power-entropy-rep}
\end{equation}
\begin{equation}
 qu^{q-1}
 =c_q\int_0^\infty
 \bigl(\log(u+s)-\log s\bigr)s^{q-2}\,\dd s,
 \label{eq:power-log-rep}
\end{equation}
where
\begin{equation*}
 c_q=\frac{q(q-1)\sin(\pi(q-1))}{\pi}.
 \label{eq:cq}
\end{equation*}
Both integrals converge absolutely for $u$ in a bounded subset of
$[0,\infty)$.
\end{lemma}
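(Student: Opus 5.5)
The plan is to prove \eqref{eq:power-log-rep} first, since it is the simpler identity. Then \eqref{eq:power-entropy-rep} follows by integrating it in $u$ from $0$ to $u$.

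For \eqref{eq:power-log-rep} I start from the elementary representation $\log(u+s)-\log s=\int_0^u \frac{\dd r}{r+s}$. Tonelli's theorem applies because everything is nonnegative, so the right-hand side becomes
\[
c_q\int_0^u\int_0^\infty \frac{s^{q-2}}{r+s}\,\dd s\,\dd r .
\]
The inner integral is a Beta integral. Substituting $s=r\sigma$ gives
\[
\int_0^\infty \frac{s^{q-2}}{r+s}\,\dd s=r^{q-2}\int_0^\infty\frac{\sigma^{q-2}}{1+\sigma}\,\dd\sigma=r^{q-2}\,\frac{\pi}{\sin(\pi(q-1))},
\]
by the classical formula $\int_0^\infty \sigma^{a-1}(1+\sigma)^{-1}\dd\sigma=\pi/\sin(\pi a)$ with $a=q-1\in(0,1)$. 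Integrating $r^{q-2}$ over $[0,u]$ gives $u^{q-1}/(q-1)$. Multiplying by $c_q$ then yields exactly $q u^{q-1}$, which confirms the normalization of $c_q$.

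For \eqref{eq:power-entropy-rep} I note that $\Psi_s(0)=0$ and $\partial_u\Psi_s(u)=\log(u+s)+1-(1+\log s)=\log(u+s)-\log s\ge0$. Hence $\Psi_s(u)=\int_0^u(\log(v+s)-\log s)\,\dd v\ge 0$. Applying Tonelli again and using \eqref{eq:power-log-rep} gives
\[
c_q\int_0^\infty\Psi_s(u)s^{q-2}\,\dd s=\int_0^u q v^{q-1}\,\dd v=u^q .
\]

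The only point needing care, though it is routine, is absolute convergence uniformly for $u$ in a bounded set $[0,U]$. Since the integrands are nonnegative, finiteness follows from the computation itself. For the explicit bounds, split the $s$-integral at $s=1$.

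Near $s=0$, $\log(u+s)-\log s\le \log(U+1)+|\log s|$, and $s^{q-2}|\log s|$ is integrable near $0$ because $q-2>-1$. Similarly $\Psi_s(u)$ is bounded by $C_U(1+|\log s|)$ there.

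Near $s=\infty$, $\log(1+u/s)\le u/s$ and $\Psi_s(u)\le u^2/(2s)$ (the latter by Taylor expansion, since $\partial_u^2\Psi_s=1/(u+s)\le 1/s$). These give integrands $O(s^{q-3})$ and $O(s^{q-3})$, which are integrable at infinity because $q<2$. This yields dominating functions independent of $u\in[0,U]$, which is presumably what the later integration against spectral measures will need.
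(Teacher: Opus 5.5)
Your proof is correct and rests on the same facts as the paper's: $\Psi_s(0)=\partial_u\Psi_s(0)=0$, $\partial_u^2\Psi_s(u)=1/(u+s)$, and the Beta integral $\int_0^\infty \sigma^{q-2}(1+\sigma)^{-1}\dd\sigma=\pi/\sin(\pi(q-1))$. The difference is only one of direction. The paper differentiates $\int_0^\infty\Psi_s(u)s^{q-2}\dd s$ twice under the integral sign, solves the resulting ODE, and then differentiates once to obtain \eqref{eq:power-log-rep}; you instead integrate $1/(r+s)$ twice, so Tonelli's theorem justifies every interchange and you avoid handling differentiation under the integral near $u=0$, where $u^{q-2}$ is singular.
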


\begin{proof}
First note that for each fixed $u$, 
\[
|\Psi_s(u)| + |\log(u+s)-\log s| \leq C_u \min \lk{ 1 + |\log s|, s^{-1} } .
\]
Since $1 < q < 2$, the two integrals are absolutely convergent.

We have $\Psi_s(0)=\Psi_s'(0)=0$ and
\[
 \Psi_s''(u)=\frac1{u+s}.
\]
Thus $\Psi_s(u) \geq \Psi_s(0) = 0$. For $u > 0$, we observe
\[
\frac{\d^2}{ \d u^2}\int_0^\infty \Psi_s(u) s^{q-2} \,\d s = \int_0^\infty\frac{s^{q-2}}{u+s}\,\d s = u^{q-2}\int_0^\infty\frac{v^{q-2}}{1+v}\,\d v,
\]
where we substitute $s=uv$ in the last equality. 
\iffalse
and the beta integral give
\begin{align*}
 \int_0^\infty\frac{s^{q-2}}{u+s}\,\dd s
 &=u^{q-2}\int_0^\infty\frac{v^{q-2}}{1+v}\,\dd v\\
 &=\frac{\pi}{\sin(\pi(q-1))}u^{q-2}.
\end{align*}
\fi
Solving this ODE yields
\[
\int_0^\infty \Psi_s(u) s^{q-2} \,\d s = \frac{1}{q(q-1)}u^q\int_0^\infty\frac{v^{q-2}}{1+v}\,\d v = \frac{1}{c_q} u^q.
\]
This proves \eqref{eq:power-entropy-rep}.
Differentiating it in $u$ yields \eqref{eq:power-log-rep}.  
\iffalse
This differentiation under the
integral is justified first on compact $u$-intervals by the bounds
$|\log(u+s)-\log s|=O(|\log s|+1)$ as $s\downarrow0$ and $O(s^{-1})$ as
$s\to\infty$, with constants uniform on the compact interval.  Absolute
convergence of the first identity follows from
\[
 \Psi_s(u)=O(|\log s|+1)\quad(s\downarrow0),
 \qquad
 \Psi_s(u)=O(s^{-1})\quad(s\to\infty),
\]
uniformly for bounded $u$, together with $1<q<2$.
\fi
\end{proof}

\begin{theorem}
\label{prop:mlsi-to-q}
Assume that \eqref{eq:mlsi} holds with constant $\alpha$. Then, for every $1<q<2$ and every $x \in \M_+ \cap {\rm dom}(A)$,
\begin{equation}
 \frac{\alpha}{q} D_q(x)\leq I_q(x).
 \label{eq:mlsi-q-defect}
\end{equation}
Moreover, the constant $\alpha/q$ is best possible.
\end{theorem}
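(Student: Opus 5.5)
The plan is to integrate the MLSI against the positive measure $c_q s^{q-2}\,\dd s$, using the shifted-entropy representation of Lemma~\ref{lem:shifted-entropy}, applied to the shifted elements $x+s\1$.

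\textbf{Step 1 (MLSI at each shift).} For $x\in\M_+\cap{\rm dom}(A)$ and $s>0$, the element $x+s\1$ is positive and invertible, and $\log(x+s\1)$ lies in the form domain by the $C^1$ functional calculus. Since $A\1=0$ and $E(x+s\1)=E(x)+s\1$, the MLSI gives
\[
\alpha\bigl[\tau((x+s)\log(x+s))-\tau((Ex+s)\log(Ex+s))\bigr]\le \tau\bigl(A(x)\log(x+s)\bigr).
\]

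\textbf{Step 2 (rewrite the left side with $\Psi_s$).} Because $\tau(x)=\tau(E(x))$, the affine terms $s\log s$ and $u(1+\log s)$ cancel. Hence the left bracket equals
\[
\tau(\Psi_s(x))-\tau(\Psi_s(E(x))).
\]

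\textbf{Step 3 (rewrite the right side).} Since $\tau(A(x)\log s)=\log s\,\tau(A(x))=0$, the right side equals $\tau\bigl(A(x)(\log(x+s)-\log s)\bigr)$.

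\textbf{Step 4 (integrate in $s$).} Multiply by $c_q s^{q-2}$ and integrate over $(0,\infty)$. By \eqref{eq:power-entropy-rep}, applied through the spectral calculus to $x$ and to $E(x)$, the left side becomes $\alpha D_q(x)$. By \eqref{eq:power-log-rep}, the right side becomes $\tau(A(x)\,q x^{q-1})=qI_q(x)$. This gives $\alpha D_q(x)\le qI_q(x)$, which is \eqref{eq:mlsi-q-defect}.

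\textbf{Justifying Fubini.} This is the main technical point. The operators $\Psi_s(x)$ and $\log(x+s)-\log s$ satisfy operator-norm bounds $C_{\|x\|}\min\{1+|\log s|,s^{-1}\}$, uniformly on the spectrum of $x$; the same holds for $E(x)$, since $\|E(x)\|\le\|x\|$. Since $A(x)\in L_2\subseteq L_1$, the scalar integrands are dominated by $\|A(x)\|_1$ times this bound. That bound is integrable against $s^{q-2}$ for $1<q<2$. So trace and integral can be interchanged, and operator-valued Bochner integrals in $\M$ recover $x^q$ and $qx^{q-1}$. If the MLSI is only assumed on the form domain and $\log(x+s)$ causes domain issues, I would approximate as in Lemma~\ref{new-lemma3.2}.

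\textbf{Optimality.} I would use a two-point example. Take $\M=\ell_\infty(\{0,1\})$ with the uniform measure and the generator $A f=\tfrac12(f-\mu(f))$, that is, $\Eform(f,g)=\tfrac12\mathrm{Cov}(f,g)$, or the symmetric jump process. Test with $f=1+\varepsilon g$ where $\mu(g)=0$. As $\varepsilon\to0$ one has $D_q\approx \tfrac{q(q-1)}2\varepsilon^2\mu(g^2)$ and $I_q\approx(q-1)\varepsilon^2\Eform(g,g)$, while $\Ent\approx\tfrac12\varepsilon^2\mu(g^2)$ and $\Eform(f,\log f)\approx\varepsilon^2\Eform(g,g)$. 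Thus the linearized ratios are $I_q/D_q\approx \tfrac2q\,\Eform/\mathrm{Var}$ and $\Eform(f,\log f)/\Ent\approx 2\,\Eform/\mathrm{Var}$.

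Sharpness of $\alpha/q$ needs a semigroup whose MLSI constant is attained at the linear regime, namely $\alpha=2\lambda$, and whose Beckner constant is also attained there. An example whose MLSI constant equals $2\lambda$ is the depolarizing or conditional-expectation semigroup $A=\id-E$ on a matrix algebra, or the Ornstein--Uhlenbeck semigroup. With $\beta_q=2\lambda/q=\alpha/q$, this would show the constant cannot be improved. I expect identifying such an example is the harder part, and I would first check the Gaussian Ornstein--Uhlenbeck semigroup, where $\alpha=2\lambda$ is classical and Beckner's inequality gives $\beta_q=2\lambda/q$ exactly.
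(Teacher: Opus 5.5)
Your proof of $\alpha D_q(x)\le qI_q(x)$ follows the paper's argument. You apply MLSI to $x+s\1$, identify $\Ent_E(x+s\1)=\tau(\Psi_s(x))-\tau(\Psi_s(E(x)))$, drop the $\log s$ term using $\tau(A(x))=0$, and integrate against $c_q s^{q-2}\,\dd s$ via Lemma~\ref{lem:shifted-entropy}. Your domination by $\|A(x)\|_1\,C\min\{1+|\log s|,s^{-1}\}$ justifies the interchange of trace and integral more carefully than the paper does.

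\textbf{Gap: sharpness.} Your proposal does not prove that $\alpha/q$ is best possible. Your linearization is correct: it shows $\alpha\le 2\lambda$ and $\beta_q\le 2\lambda/q$ for every semigroup. So it suffices to exhibit one semigroup with $\alpha=2\lambda$. No separate condition on the Beckner constant is needed, because then $\alpha/q\le\beta_q\le 2\lambda/q=\alpha/q$. However, you never verify $\alpha=2\lambda$ for any example.

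Your claim about $A=\id-E$ on a matrix algebra is false in general. Take $E=\tau(\cdot)\1$ and normalize $\tau(f)=1$. Then $\Eform(f,\log f)-2\Ent_E(f)=-\tau(\log f)-\tau(f\log f)$. For $f=1+\varepsilon g$ with $\tau(g)=0$, this equals $-\tfrac{\varepsilon^3}{6}\tau(g^3)+O(\varepsilon^4)$. On $M_n$ or $\ell_\infty^n$ with $n\ge 3$, choose $g=\mathrm{diag}(2,-1,-1,0,\dots,0)$. Then $\tau(g^3)=6/n>0$, so $\alpha<2=2\lambda$.

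On the two-point space this cubic term vanishes, since every centered $g$ is a multiple of $\omega$ and $\tau(\omega^3)=0$. That alone does not give $\alpha=2$, so the paper proves it directly on $\{-1,1\}$ with $A=I-E$. For $f=1+s\omega$, the function $F(s)=\Eform(f,\log f)-2\Ent_E(f)$ satisfies $F(0)=F'(0)=0$ and $F''(s)=2s^2/(1-s^2)^2\ge 0$. Your own two-point example is a rescaling of the paper's, so adding this computation closes the gap.

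The Ornstein--Uhlenbeck route would also work, but only if you carry it out. You would need to invoke Gross's log-Sobolev inequality to get $\alpha\ge 2=2\lambda$. You would also need to replace the unbounded eigenfunction $x_1$ by bounded test elements with Rayleigh quotient close to $\lambda$, so that $1+\varepsilon g\in\M_+\cap{\rm dom}(A)$.
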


\begin{proof}
For $s > 0$ and $x \in \M_+ \cap {\rm dom}(A)$, one has
\begin{align*}
\Psi_s(x) - \Psi_s( E(x) ) & = (x + s\1)\log (x + s\1) - ( E(x) + s\1)\log ( E(x) + s\1) \\
& \qquad - (x - E(x))(1 + \log s).
\end{align*} 
By applying \eqref{eq:power-entropy-rep} and taking traces, we obtain
\begin{align*}
 D_q(x)
 &=c_q\int_0^\infty
 \Bigl[
 \tau((x+s\1)\log(x+s\1))
 -\tau((E(x)+s\1)\log(E(x)+s\1))
 \Bigr]s^{q-2}\,\dd s \notag\\
 &=c_q\int_0^\infty\Ent_E(x+s\1)s^{q-2}\,\dd s,
 \label{eq:Dq-entropy-integral}
\end{align*}
where we use the fact that
$E(x+s\1)=E(x)+s\1$ and $\tau(E(x))=\tau(x)$.
For each $s>0$, $x+s\1$ is invertible.  Then by \eqref{eq:mlsi}, one has
\begin{align*}
 \alpha\Ent_E(x+s\1)
 \leq\Eform(x+s\1,\log(x+s\1))
 =\Eform(x,\log(x+s\1)).
\end{align*}
This implies
\begin{align*}
 \alpha D_q(x) & \leq c_q\int_0^\infty
 \Eform(x,\log(x+s\1))s^{q-2}\,\dd s\\
 & =
 c_q\int_0^\infty
 \Eform(x,\log(x+s\1)-\log s\,\1)s^{q-2}\,\dd s\\
 & =qI_q(x),
\end{align*}
where we use $\Eform(x,\log s\,\1) = 0$ and \eqref{eq:power-log-rep}. This proves \eqref{eq:mlsi-q-defect}.
\iffalse
For completeness, no cancellation of divergent operator integrals is used.
For $0<\varepsilon<R<\infty$, define
\[
 G_{\varepsilon,R}(u)
 =c_q\int_\varepsilon^R
   (\log(u+s)-\log s)s^{q-2}\,\dd s.
\]
The entropy integrands are nonnegative by operator Jensen.  On the energy
side, the divided-difference kernel of $u\mapsto\log(u+s)$ is positive, so
\begin{align*}
 &c_q\int_\varepsilon^R
 \Eform(f,\log(f+s\1)-\log s\,\1)s^{q-2}\,\dd s\\
 &\qquad
 =\langle\delta f,G_{\varepsilon,R}^{[1]}(L_f,R_f)\delta f\rangle_{\mathcal H}
 \geq0.
\end{align*}
As the interval $[\varepsilon,R]$ increases to $(0,\infty)$, these positive
joint-spectral kernels increase.  Tonelli's theorem and monotone convergence
for positive quadratic forms therefore apply on both sides.  Formula
\eqref{eq:power-log-rep}, first for scalar spectral values and then in the
joint functional calculus, identifies the limiting positive quadratic form
with $qI_q(f)$.

If $x$ is not invertible, the argument can still be applied for $x_\varepsilon = x + \varepsilon\1$, and then we have
\[
\alpha D_q(x_\varepsilon) \leq q I_q(x_\varepsilon).
\]
By letting $\varepsilon \to 0$, the proof is completed.
\fi
\medskip 

Now we prove the sharpness of the constant. Consider $\{ -1, 1 \}$ equipped with the normalized counting measure. For any $f : \{ -1, 1 \} \to \com$, the expectation is $E(f) = (f(-1) + f(1))/2$. Take
\begin{equation*}
 A= I-E, \quad T_t = e^{-tA} .
 \label{eq:balanced-two-point-generator}
\end{equation*}
Then $(T_t)$ is a quantum Markov semigroup on $L_\infty(\{-1, 1\})$ with the fixed-point subspace $\com \1$. We first verify the optimal MLSI constant. Define the coordinate function
\[
\omega(x) = x, \quad x \in \{ -1, 1 \}.
\]
Then $\{1, \omega\}$ is a linear basis of $L_\infty(\{-1, 1\})$. For every positive $f \in L_\infty(\{-1, 1\})$, one can write
\[
f = C (1 + s \omega), \quad C > 0,\ -1 < s < 1.
\]
Without loss of generality, let $C= 1$ and $0 \leq s < 1$. Then
\[
\Ent_E(f) = \frac{1}{2} \sk{ (1 + s)\log(1 + s) + (1 - s)\log(1 - s) }  ,
\]
and
\[
\Eform(f, \log f) = \frac{1}{2} \sk{ (1 + s)\log(1 + s) + (1 - s)\log(1 - s) - ( \log(1+s) + \log(1-s) ) }.
\]
Let $F(s) = \Eform(f, \log f) - 2\Ent_E(f)$, then $F(0) = F'(0) = 0$ and
\[
F''(s) = -\frac{1}{2} \sk{ \frac{1}{s + 1} - \frac{1}{s - 1} - \frac{1}{(s + 1)^2} - \frac{1}{(s - 1)^2} } = \frac{2s^2}{(s^2 - 1)^2} \geq 0.
\]
Hence $F(s) \geq 0$ and therefore
\[
2\Ent_E(f) \leq \Eform(f, \log f), \quad \forall\, f \text{ positive}.
\]
Conversly, for $f_\varepsilon = 1 + \varepsilon \omega$, we have
\[
\Ent_E(f_\varepsilon) = \frac{\varepsilon^2}{2} + o(\varepsilon^2), \quad \Eform(f_\varepsilon, \log f_\varepsilon) = \varepsilon^2 + o(\varepsilon^2).
\]
We conclude that the optimal MLSI constant is $\alpha = 2$. Theorem \ref{prop:mlsi-to-q} implies the Beckner constant $\beta_q \geq 2/q$. Moreover, direct computation yields
\[
D_q(f_\varepsilon) = \frac{q(q-1)}{2}\varepsilon^2 + o(\varepsilon^2), \quad I_q(f_\varepsilon) = (q-1)\varepsilon^2 + o(\varepsilon^2).
\]
Then $\beta_q \leq 2/q$. This proves the sharpness of the constant $\alpha/q$.
\end{proof}

\begin{remark}
Theorem \ref{prop:mlsi-to-q} also answers the problem on the best universal comparison between the MLSI constant and the Beckner constant posed in \cite[Remark~2.10]{Adamczak2022} in the
scalar-fixed-point reversible setting.
\end{remark}

\begin{proof}[Proof of \eqref{main-2} in Theorem \ref{Main-p-PI}]
For $p>2$ and $x \in \Aalg_{\rm sa}$, let $q = p/(p-1)$. By Theorem \ref{prop:mlsi-to-q}, we can choose $\beta_q=\alpha/q$.  Hence, by Theorem \ref{thm:single-defect}, we obtain
\begin{align*}
 \|x-Ex\|_p
\leq
 \frac{2^{3/2+1/p}}
 {q\sqrt{(\alpha/q)(q-1)}}
 \|\Gamma(x,x)^{1/2}\|_p =\frac{2^{3/2+1/p}(p-1)}
 {\sqrt{\alpha p}}
 \|\Gamma(x,x)^{1/2}\|_p.
\end{align*}
Since $2^{3/2+1/p}(p-1)\leq 2\sqrt{2}p$, we finish the proof.
\end{proof}

The $L_p$-Poincar\'{e} inequality with $O(\sqrt{p})$ coefficient implies the following deviation inequality and exponential integrability (cf. \cite{Efraim2008, Bobkov1999, Junge2015}). 
\begin{corollary}
	Suppose that \eqref{eq:mlsi} holds with constant $\alpha > 0$. Then for any $x \in \Aalg_{\rm sa}$,
	\[
		\tau\sk{ e^{|x - E(x)|} } \leq 2\exp\sk{ 16\alpha^{-1} \| \Gamma(x, x) \|_{\infty} }, 
	\]
	and
	\[
		\tau\sk{ \1_{ [t, \infty) } (|x - E(x)|) } \leq 2\exp\sk{ -\frac{\alpha t^2}{ 64 \| \Gamma(x, x) \|_\infty } }, \quad \forall\, t > 0.
	\] 
\end{corollary}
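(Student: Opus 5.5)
Let $y=x-E(x)$ and $G=\|\Gamma(x,x)\|_\infty$; we may assume $0<G<\infty$. The plan is to pass from the $L_p$-Poincaré inequality \eqref{main-2} to moment bounds, and then sum the Taylor series of the exponential. Since $\|\Gamma(x,x)^{1/2}\|_p\leq \|\Gamma(x,x)\|_\infty^{1/2}$ because $\tau$ is a state, \eqref{main-2} gives for every $p\geq2$
\[
\tau(|y|^p)\leq \Bigl(\tfrac{8pG}{\alpha}\Bigr)^{p/2}.
\]
For $1\le p<2$ we use monotonicity of $L_p$-norms (finite trace): $\|y\|_p\leq\|y\|_2\leq \sqrt{16G/\alpha}$. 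This yields the same bound up to harmless constants.

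For the exponential integrability, we expand $\tau(e^{|y|})=\sum_{k\geq0}\tau(|y|^k)/k!$ and insert the moment bounds. The even terms $k=2m$ give
\[
\sum_m \frac{(16mG/\alpha)^m}{(2m)!}.
\]
Using $(2m)!\geq (m!)^2 4^m/(\text{poly})$, or more cleanly $m^m/(2m)!\leq 1/(m!\,\cdot)$ via $m^m\le e^m m!$ and $(2m)!\geq 4^m (m!)^2/(2m+1)$, this is dominated by $\sum_m (cG/\alpha)^m/m!=\exp(cG/\alpha)$. The odd terms are handled by $|y|^{2m+1}\leq \tfrac12(|y|^{2m}+|y|^{2m+2})$ in the functional calculus, or by Cauchy--Schwarz, which costs at most the factor $2$. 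The main obstacle is bookkeeping the constants so that the exponent is exactly $16/\alpha$.

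A cleaner route, which I would try first, avoids the series. Apply the $L_p$ bound to $\lambda y$ and use the standard fact that moment growth $\|y\|_p\leq K\sqrt p$ for all $p\ge2$ implies
\[
\tau(e^{\lambda|y|})\leq 2e^{c\lambda^2K^2}
\]
for all $\lambda>0$ (as in \cite{Efraim2008,Bobkov1999}). Here $K^2=8G/\alpha$, and one then takes $\lambda=1$; matching $16=2\cdot8$ suggests $c=2$.

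The deviation inequality then follows from the exponential bound by Chebyshev in spectral form. Since $\1_{[t,\infty)}(|y|)\leq e^{\lambda(|y|-t)}$ as operators,
\[
\tau\bigl(\1_{[t,\infty)}(|y|)\bigr)\leq 2\exp\bigl(16\lambda^2G/\alpha-\lambda t\bigr).
\]
Optimizing at $\lambda=\alpha t/(32G)$ gives $2\exp(-\alpha t^2/(64G))$, which matches the stated constant. This confirms that the exponential bound must hold for all $\lambda$, in the form $\tau(e^{\lambda|y|})\leq2\exp(16\lambda^2\alpha^{-1}G)$; by scaling $x\mapsto\lambda x$, under which $\Gamma$ scales by $\lambda^2$, this follows from the $\lambda=1$ case. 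So the real work is the single moment-to-exponential summation with constant $16$.
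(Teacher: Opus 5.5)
Your overall plan is the paper's: moments from \eqref{main-2} with $\|\Gamma(x,x)^{1/2}\|_p\le\|\Gamma(x,x)\|_\infty^{1/2}$, then summing the exponential series, then Chebyshev in spectral form. Your scaling argument and the optimization at $\lambda=\alpha t/(32G)$ for the tail bound are correct.

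The gap is the step you yourself call the real work. The summation with the exact constants $2$ and $16$ is never carried out, and the tools you propose do not give them.
\begin{itemize}
\item \emph{Odd moments.} The termwise bound $|y|^{2m+1}\le\frac12(|y|^{2m}+|y|^{2m+2})$ gives, after reindexing, $\sum_{j\ge1} j\,\tau(|y|^{2j})/(2j)!$, i.e.\ an extra factor $j$. Even with a sharp even-moment estimate this contributes about $(16G/\alpha)e^{16G/\alpha}$, which is not controlled by $e^{16G/\alpha}$ for large $G/\alpha$. So it does not cost only a factor $2$, and the Cauchy--Schwarz alternative is left unchecked.
\item \emph{Even moments.} The estimates $m^m\le e^m m!$ and $(2m)!\ge 4^m(m!)^2/(2m+1)$ only give $\sum_m (4eG/\alpha)^m(2m+1)/m!=(1+8eG/\alpha)e^{4eG/\alpha}$. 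This is larger than $e^{16G/\alpha}$ for small $G/\alpha$, so it cannot produce the stated constant.
\item \emph{The ``cleaner route''.} It is circular: you read the value $c=2$ off from the target. Proving the standard fact with $c=2$ is exactly the missing computation.
\end{itemize}

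Two elementary observations close the gap, and they are what the paper uses.

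First, apply the scalar inequality $e^{|u|}\le e^{u}+e^{-u}$ in the functional calculus. This gives $\tau(e^{|y|})\le 2\tau(\cosh y)$, so odd moments never appear and you do not need the case $1\le p<2$. This is exactly where the prefactor $2$ comes from.

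Second, $(2k)!=k!\,(k+1)\cdots(2k)\ge k!\,k^k$. Combining this with $\tau(y^{2k})\le(16kG/\alpha)^k$, which is \eqref{main-2} at $p=2k$, gives
\[
\tau(\cosh y)\le 1+\sum_{k\ge1}\frac{(16kG/\alpha)^k}{(2k)!}\le\sum_{k\ge0}\frac{(16G/\alpha)^k}{k!}=e^{16G/\alpha}.
\]
With these two inputs in place, your argument coincides with the paper's proof.
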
 

\begin{proof}
	It suffices to consider selfadjoint $x \in \Aalg$ with $E(x) = 0$. Obverve that
	\[
		\tau( e^{|x|} ) \leq \tau(e^{x}) + \tau(e^{-x}) = 2\tau\sk{ \cosh (x) }. 
	\] 
	Invoking Taylor's expansion and the moment estimate \eqref{main-2} with $p= 2k$, we have
	\[
	\begin{aligned}
		\tau( \cosh (x) ) & = 1 + \sum_{k = 1}^{\infty} \frac{ \tau(x^{2k}) }{ (2k)! } \leq 1 + \sum_{k = 1}^{\infty} \frac{ 16^{k}k^k }{ \alpha^k (2k)! } \| \Gamma(x, x) \|_k^k \\
		& \leq 1 + \sum_{k = 1}^{\infty} \frac{ 16^{k} }{ \alpha^k k! } \| \Gamma(x, x) \|_\infty^k = \exp\sk{ 16\alpha^{-1}\| \Gamma(x, x) \|_\infty }.
	\end{aligned}
	\]
	The tail bound follows from optimizing $\lambda > 0$ in
	\[
		\tau\sk{ \1_{ [t, \infty) } (|x|) } \leq e^{-\lambda t} \tau\sk{ e^{\lambda |x|} } \leq 2\exp\sk{ - \lambda t + 16\lambda^2 \alpha^{-1}\| \Gamma(x, x) \|_\infty },
	\] 
	where the Chebyshev inequality is applied in the first inequality.
\end{proof}

\begin{remark}
	Additionally, we can derive a one-sided variant of the preceding corollary along with associated transportation inequalities, analogous to those established in \cite[Proposition 4.14, Theorem 4.18]{Junge2015}. We invite the interested reader to formulate the statements and to fill in technical details.
\end{remark}

\bigskip

\section{Non-tracial reduction}
\label{sec:extensions}

In this section, we extend our main $L_p$-Poincar\'e inequalities to general quantum Markov semigroups satisfyting GNS detailed balance in the non-tracial setting. The main tool is the Haagerup reduction argument devoloped in \cite{HJX}, and the proof follows from the similar arguments as in \cite[Section~4.6]{JW}.  The spectral gap hypothesis is imposed on the original semigroup. For the MLSI estimate, we require the same inequality on its dyadic crossed-product extension; a complete MLSI meets this hypothesis at the end of this section.

\subsection{GNS detailed balance and entropy}

% We now record the precise form in which the preceding tracial results pass to a non-tracial algebra.  
Let $\M$ be a $\sigma$-finite von Neumann algebra equipped with a normal faithful state $\phi$ and denote by $(\sigma^\phi_t)_{t \in \real}$ its corresponding modular automorphism group. A quantum Markov semigroup $(T_t)$ on $\M$ is said to satisfy GNS detailed balance with respect to $\phi$ if
\begin{equation}
 \phi((T_tx)^*y)=\phi(x^*T_ty),
 \quad x,y\in\M,\ t\geq0.
 \label{eq:GNS-detailed-balance}
\end{equation}
% This condition implies commutation with the modular automorphism group and is the compatibility needed to lift $(T_t)$ to the crossed product; see \cite[Section~4.2]{JW}.  
Let $A$ be the infinitesimal generator of $(T_t)$ on $\M$. Its fixed-point algebra $\M^T$ is invariant under $(\sigma^\phi_t)_{t \in \real}$,
and we denote by $E^\phi:\M \to \M^T$ the $\phi$-preserving conditional
expectation; see, e.g. \cite[Section~4.2]{JW} or \cite{GJLL2025}. On the Hilbert space of the GNS representation, we use $\|x\|_{2,\phi}=\phi(x^*x)^{1/2}$.

We present Haagerup $L_p$ space $L_p(\M, \phi)$ (or simply $L_p(\M)$), which is independent of the chosen state; see \cite{Terp1981}. Let $h_\phi$ be the Haagerup density of $\phi$.  Let $1\leq p\leq \infty$. Define the symmetric embedding operator
\begin{equation}
 \iota_p(x)=h_\phi^{1/(2p)}xh_\phi^{1/(2p)},
 \quad x\in\M,
 \label{eq:symmetric-Haagerup-embedding}
\end{equation}
and define the $L_p$ extensions by
\begin{equation}
 T_{t,p}\iota_p(x)=\iota_p(T_tx),
 \quad
 E_p^\phi\iota_p(x)=\iota_p(E^\phi x).
 \label{eq:Haagerup-extensions}
\end{equation}
Then $E_p^\phi$ extends to a contractive projection onto $L_p(\M^T)$, while $(T_{t, p})$ is a contraction semigroup on $L_p(\M)$; these facts follow from Kosaki's
interpolation and the identification with Haagerup $L_p$ spaces and Kosaki $L_p$ spaces \cite{Kosaki,HJX}.  

We say that $(T_t)$ satisfies $L_2$ spectral gap with constant $\lambda$ if
\begin{equation}
 \lambda\|x-E^\phi x\|_{2,\phi}^2
 \leq \phi(x^*Ax),
 \quad x\in {\rm dom}(A).
 \label{eq:GNS-spectral-gap}
\end{equation}
To define MLSI, we first introduce the Araki-Umegaki relative entropy \cite{Araki1976}. Note that there is an order-preserving isometric isomorphism
\[
\M_* \to L_1(\M), \quad \psi \mapsto h_\psi.
\]
Denote by $T_{t*}$ and $A_*$ respectively the pre-adjoints of $T_{t}$ and $A$ on $L_1(\M)$ instead of $\M_*$. For a normal faithful state $\sigma$ and $a\geq 1$, define
\[
B_a(\sigma) = \lk{ \psi \in \M_{*+} : a^{-1} \sigma \leq \psi \leq a \sigma }, \quad B(\sigma) = \bigcup_{a \geq 1} B_a (\sigma).
\]
Then $B(\sigma) \cap {\rm dom}(A_*)$ is dense in $\M_{*+}$, see \cite{Wirth2025}. For $\sigma, \psi \in \M_{*+}$, let $\Delta_{\psi, \sigma}$ denote the relative modular oprator and $e_{\psi, \sigma}$ its spectral measure. Define the Araki-Umegaki relative entropy by
\[
D(\psi \| \sigma) = \int_{(0, \infty)} \lambda \log \lambda \,\d \langle h_\sigma^{1/2}, e_{\psi, \sigma} h_\sigma^{1/2} \rangle + \infty \cdot \psi(1 - s(\sigma)),
\]
where we set the usual convention $\infty \cdot 0 = 0$ and $s(\sigma)$ is the support of $\sigma$. In particular, if $s(\sigma)=1$ and $\psi \in B_a(\sigma)$ for some $a \geq 1$, the log-difference $\log h_\psi - \log h_\sigma$ is bounded, and 
\[
D(\psi \| \sigma) = \psi(\log h_\psi - \log h_\sigma) = \tr( h_\psi( \log h_\psi - \log h_\sigma ) ),
\]
where ``$\tr$'' is the trace on the Haagerup space $L_1(\M)$. In addition, if $s(\sigma)=1$ and $\psi \in B(\sigma) \cap {\rm dom}(A_*)$, define the Fisher information by
\[
I_A(\psi \| \sigma) = A_*(\psi) ( \log h_\psi - \log h_\sigma ) = \tr( A_*(h_\psi)( \log h_\psi - \log h_\sigma ) ).
\]
For $\psi \in B(\phi) \cap {\rm dom}(A_*)$, by \cite[Theorem B.1]{Hiai2018} or \cite{Hiai2021}, one has
\begin{equation}\label{eq:entropy-chain}
D(\omega \| \phi) = D(\omega \| \psi) + \omega(\log h_{\psi} - \log h_{\phi}), \quad \omega \in \M_{*+}.
\end{equation} 
The identity is interpreted in the extended sense when an entropy is infinite.

Let $E^\phi_*$ be the pre-adjoint of $E^\phi$. We say that $(T_t)$ satisfies MLSI with constant $\alpha$ if
\begin{equation}\label{eq:state-MLSI}
      \alpha D(\psi \| E^\phi_*(\psi) ) \leq I_A(\psi \| E^\phi_*(\psi) ),
\end{equation}
for all $\psi \in {\rm dom}(A_*)$ with $E^\phi_*(\psi) \in B(\psi)$ faithful. In addition, $(T_t)$ satisfies complete MLSI (CMLSI) if \eqref{eq:state-MLSI} holds for $T_t \otimes {\rm id}_{\N}$ for all $\sigma$-finite von Neumann algebra $\N$.

The following two lemmas can be found in \cite{Wirth2025}.
\begin{lemma}\label{lem:Entropy-ce}
Let $\M \subseteq \widehat{\M}$ be a unital inclusion of von Neumann algebras with the normal faithful conditional expectation $E : \widehat{\M} \to \M$.
\begin{enumerate}[(i)]
      \item If $\psi \in \M_{*+}$, then $$D(\psi \circ E \| \phi \circ E ) = D(\psi \| \phi). $$
      \item If $\psi \in B(\phi)$, then $$E \sk{ \log h_{\psi \circ E} - \log h_{\phi \circ E} } = \log h_{\psi} - \log h_\phi.$$
\end{enumerate}
\end{lemma}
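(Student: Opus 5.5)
Both parts of Lemma~\ref{lem:Entropy-ce} are statements about how Haagerup densities and relative modular data behave under a normal faithful conditional expectation. I would prove them using Connes' spatial derivative, or equivalently the Haagerup density picture in which the inclusion $\M\subseteq\widehat\M$ with $E$ induces an embedding $L_1(\M)\hookrightarrow L_1(\widehat\M)$.

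\textbf{Part (ii).} Fix the reference weight $\phi\circ E$ on $\widehat\M$. By Takesaki's theorem, since $\phi\circ E\circ E=\phi\circ E$, the modular group $\sigma^{\phi\circ E}$ restricts to $\sigma^\phi$ on $\M$. It follows that the crossed product $\M\rtimes_{\sigma^\phi}\real$ sits inside $\widehat\M\rtimes_{\sigma^{\phi\circ E}}\real$. The dual traces are compatible, and $E$ extends to a trace-preserving conditional expectation $\widetilde E$ between the crossed products.

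Under this embedding one has the identification $h_{\psi\circ E}=h_\psi$ for every $\psi\in\M_{*+}$. Both densities are characterized by the dual-action scaling together with $\tr(h\,x)=\psi(E(x))$ for $x\in\widehat\M$, and $h_\psi$ is affiliated with the smaller crossed product. In particular $h_{\phi\circ E}=h_\phi$.

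This identification turns (ii) into a triviality: the difference $\log h_{\psi\circ E}-\log h_{\phi\circ E}$ equals $\log h_\psi-\log h_\phi$. For $\psi\in B(\phi)$ this difference is a bounded element. It lies in $\M$, because it is invariant under the dual action and is affiliated with the smaller crossed product. Hence $E$ fixes it, which is exactly (ii).

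\textbf{Part (i).} The same identification handles the entropy. When $\psi\in B_a(\phi)$ for some $a$,
\[
D(\psi\circ E\|\phi\circ E)=\tr\bigl(h_{\psi}(\log h_\psi-\log h_\phi)\bigr)=D(\psi\|\phi).
\]
For general $\psi$, including non-faithful ones and those with infinite entropy, I would use the relative modular operator directly. The GNS space of $\phi$ embeds isometrically into that of $\phi\circ E$ via $x h_\phi^{1/2}\mapsto x h_{\phi\circ E}^{1/2}$. Connes cocycles satisfy $[D(\psi\circ E):D(\phi\circ E)]_t=[D\psi:D\phi]_t$, which lies in $\M$. Consequently the spectral measures $\langle h^{1/2},e_{\psi,\phi}h^{1/2}\rangle$ coincide, and the integral formula gives equality. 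The support term also matches, since $s(\phi\circ E)=1$ exactly when $s(\phi)=1$.

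\textbf{Main obstacle.} I expect the main difficulty to be justifying $h_{\psi\circ E}=h_\psi$ rigorously when $\psi$ is not faithful, that is, establishing the cocycle identity and the compatibility of the crossed products. As a fallback, I would first prove both parts for $\psi\in B(\phi)$ and then approximate by $\psi_\varepsilon=(1-\varepsilon)\psi+\varepsilon\phi$. The passage to the limit would use the lower semicontinuity of entropy, and its continuity along such convex combinations, on both sides.
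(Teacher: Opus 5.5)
The paper gives no proof of this lemma; it is quoted from \cite{Wirth2025}, so there is no in-paper argument to match. Your route is correct, and it actually proves more than is stated. Takesaki's theorem $\sigma^{\phi\circ E}|_{\M}=\sigma^\phi$ gives the inclusion of crossed products with compatible traces. The identity $h_{\psi\circ E}=h_\psi$ holds for \emph{every} $\psi\in\M_{*+}$, faithful or not: it is just the standard embedding $L_1(\M)\subseteq L_1(\widehat\M)$ induced by $E$, so your ``main obstacle'' is not really one. Finally, the cocycle identity $[D(\psi\circ E):D(\phi\circ E)]_t=[D\psi:D\phi]_t$ shows that $\log h_{\psi\circ E}-\log h_{\phi\circ E}$ already lies in $\M$ and equals $\log h_\psi-\log h_\phi$ before $E$ is applied.

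Since $\log h_\phi$ is not $\tau$-measurable, that ``difference'' is best read as Araki's relative Hamiltonian. It is recovered by differentiating $[D\psi:D\phi]_t=h_\psi^{it}h_\phi^{-it}$ at $t=0$. So the cocycle identity is the real input for (ii), not only for (i). Your intertwining $V\Delta^{it}_{\psi,\phi}=\Delta^{it}_{\psi\circ E,\phi\circ E}V$ covers non-faithful $\psi$ after reducing by $s(\psi\circ E)=s(\psi)$, which holds because $E$ is faithful. Your fallback, by contrast, would not work: $(1-\varepsilon)\psi+\varepsilon\phi\in B(\phi)$ requires $\psi\le c\phi$ for some $c$, which fails in general in infinite dimensions. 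Fortunately the main line does not need it.

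For comparison, there is a much lighter route with no crossed products. For (i), apply data processing to the unital normal CP maps $E$ and $\iota:\M\hookrightarrow\widehat\M$:
\[
D(\psi\circ E\|\phi\circ E)\le D(\psi\|\phi)=D(\psi\circ E\circ\iota\|\phi\circ E\circ\iota)\le D(\psi\circ E\|\phi\circ E)
\]
This holds for all $\psi$, infinite values included. For (ii), take $\omega\in B(\phi)$. Write the chain rule \eqref{eq:entropy-chain} on $\widehat\M$ for $(\omega\circ E,\psi\circ E,\phi\circ E)$ and on $\M$ for $(\omega,\psi,\phi)$, then cancel the finite entropies using (i). This gives $\omega\bigl(E(\log h_{\psi\circ E}-\log h_{\phi\circ E})\bigr)=\omega(\log h_\psi-\log h_\phi)$. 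Such $\omega$ separate points of $\M$, for example $x\mapsto\langle a\xi_\phi,xa\xi_\phi\rangle$ with $a\in\M'$ invertible. This route yields (ii) only in the stated form with $E$. Yours identifies the log-difference itself, at the price of the modular-theory machinery.
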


\begin{lemma}
Let $(\M_n)_{n \geq 1}$ be an increasing sequence of unital von Neumann subalgebras of $\M$ such that $\cup_{n \geq 1} \M_n$ is $w^*$ dense in $\M$. Let $\psi_n = \psi |_{\M_n}$ and $ \phi_n = \phi |_{\M_n}$.
\begin{enumerate}[(i)]
      \item If $\psi \in \M_{*+}$, then $$ D(\psi_n \| \phi_n ) \uparrow D(\psi \| \phi). $$
      \item If $\psi \in B(\phi)$, then $$ \log h_{\psi_n} - \log h_{\phi_n} \to \log h_{\psi} - \log h_{\phi} \quad \text{ in the $w^*$ topology}. $$
\end{enumerate}
\end{lemma}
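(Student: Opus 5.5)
The plan is to prove (i) by monotonicity plus a lower-semicontinuity argument based on Kosaki's variational formula. Part (ii) then follows from (i) and the chain rule \eqref{eq:entropy-chain}, together with a uniform norm bound on the log-differences.

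\textbf{Step 1: monotonicity in (i).} Write $\iota_{n,m}:\M_n\hookrightarrow\M_m$ for $n\le m$, and $\iota_n:\M_n\hookrightarrow\M$. Restriction of normal functionals is the predual of a unital normal $*$-homomorphism, hence of a unital completely positive map. Uhlmann's monotonicity of the Araki--Umegaki relative entropy then gives
\[
D(\psi_n\|\phi_n)\le D(\psi_m\|\phi_m)\le D(\psi\|\phi),\qquad n\le m .
\]
So the sequence increases, and its limit is at most $D(\psi\|\phi)$.

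\textbf{Step 2: the reverse inequality in (i).} For this I would use Kosaki's variational formula:
\[
D(\psi\|\phi)=\sup_{k\ge1}\sup_{x}\Bigl[\log k-\int_{1/k}^\infty\Bigl(\psi(y(t)^*y(t))+t^{-1}\phi(x(t)x(t)^*)\Bigr)\frac{\d t}{t}\Bigr].
\]
Here $x:[1/k,\infty)\to\M$ ranges over step functions with finitely many values that are eventually equal to $\1$, and $y(t)=\1-x(t)$.

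Fix $\varepsilon>0$ and a near-optimal pair $(k,x)$. By Kaplansky density, each of the finitely many values of $x$ is the strong$^*$ limit of a bounded net in $\bigcup_n\M_n$, and such a net can be taken inside some single $\M_n$ at each stage because the $\M_n$ increase. The quantities $\psi(y^*y)$ and $\phi(xx^*)$ are continuous along bounded strong$^*$ convergent nets. Hence there is a step function $x_n$ with values in $\M_n$ whose Kosaki functional is within $\varepsilon$ of that of $x$.

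Applying Kosaki's formula in $\M_n$ then gives $\lim_n D(\psi_n\|\phi_n)\ge D(\psi\|\phi)-2\varepsilon$. The case $D(\psi\|\phi)=\infty$ is handled the same way with arbitrarily large values. This is the step I expect to be the main technical obstacle: one must check that the approximation preserves the normalization of the step functions near $t=\infty$ and near $t=1/k$, and that bounded nets suffice.

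\textbf{Step 3: boundedness in (ii).} Let $\psi\in B_a(\phi)$. Then $\psi_n\in B_a(\phi_n)$ for every $n$. Set
\[
k_n=\log h_{\psi_n}-\log h_{\phi_n}\in\M_n,\qquad k=\log h_\psi-\log h_\phi\in\M .
\]
Araki's perturbation theory (relative Hamiltonians), as used in \cite{Wirth2025}, gives a bound $\|k_n\|_\infty\le C(a)$ depending only on $a$; in particular it is uniform in $n$. Since $(k_n)$ is bounded, $w^*$-convergence reduces to checking $\omega(k_n)\to\omega(k)$ on a norm-total subset of $\M_*$.

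\textbf{Step 4: identifying the limit in (ii).} Take $\omega\in B(\phi)$; then all entropies below are finite. Applying \eqref{eq:entropy-chain} in $\M_n$ and in $\M$ gives
\[
\omega(k_n)=\omega_n(k_n)=D(\omega_n\|\phi_n)-D(\omega_n\|\psi_n),\qquad \omega(k)=D(\omega\|\phi)-D(\omega\|\psi).
\]
By part (i), applied to the pairs $(\omega,\phi)$ and $(\omega,\psi)$, the right-hand sides converge, so $\omega(k_n)\to\omega(k)$.

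It remains to note that the linear span of $B(\phi)$ is norm dense in $\M_*$. Indeed, for any $\omega\in\M_{*+}$ one has $\omega+\delta\phi\in B(\phi)$ when $\omega\le c\phi$, and such $\omega$ are dense in $\M_{*+}$. Combined with the uniform bound from Step 3, this yields $k_n\to k$ in the $w^*$ topology.
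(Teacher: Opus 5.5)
Your proof is correct, but it is a different route from the paper's only in the sense that the paper gives no argument at all: it quotes this lemma from \cite{Wirth2025}, so your write-up is a self-contained substitute for that citation.

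For (i) you take the classical route to martingale convergence of relative entropy via Kosaki's variational formula. The step you flag as the main obstacle is harmless, for three reasons:
\begin{itemize}
\item The value $\1$ lies in every $\M_n$, so the tail of the step function is untouched.
\item The functional depends only on finitely many values, through $\psi\bigl((\1-x_j)^*(\1-x_j)\bigr)$ and $\phi(x_jx_j^*)$. These are continuous along bounded strong$^*$ convergent nets.
\item Finitely many approximants from $\bigcup_n\M_n$ all lie in a single $\M_N$.
\end{itemize}
Monotonicity can also be read off from the same formula, since the supremum for $\M_n$ runs over fewer step functions.

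For (ii), your reduction combines three ingredients: a uniform bound on $k_n$, the chain rule tested against $\omega\in B(\phi)$, and norm density of $\operatorname{span}B(\phi)$ in $\M_*$. Its merit is that it shows (ii) is a formal consequence of (i), which the citation leaves implicit.

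Three small points to tidy up.
\begin{itemize}
\item The bound in Step 3 is explicit and elementary. Restriction gives $\psi_n\in B_a(\phi_n)$, i.e. $a^{-1}h_{\phi_n}\le h_{\psi_n}\le a\,h_{\phi_n}$. Operator monotonicity of $\log$ then yields $\|k_n\|_\infty\le\log a$. No perturbation theory is needed beyond the existence of the bounded log-difference, which the statement already presupposes.
\item In Step 4 you use \eqref{eq:entropy-chain} on $\M_n$, where there is no semigroup. As written in the paper, that identity carries a ${\rm dom}(A_*)$ hypothesis, but the identity itself does not involve the semigroup. You should invoke it in its general form, $D(\omega\|\phi)=D(\omega\|\psi)+\omega(\log h_\psi-\log h_\phi)$ for arbitrary $\psi\in B(\phi)$ on a $\sigma$-finite algebra, as in the source \cite{Hiai2018} that the paper cites for it.
\item In Step 4 you apply (i) to non-normalized $\omega\in B(\phi)$ and to the reference $\psi$, which need not be a state. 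Kosaki's formula should therefore carry the factor $\omega(1)\log k$ rather than $\log k$.
\end{itemize}
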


\subsection{Haagerup's reduction}
Let $H$ be the Hilbert space of the GNS representation of $(\M, \phi)$ associated with $\phi$. Let
\[
 G=\bigcup_{m\geq1}2^{-m}\mathbb Z
\]
be equipped with the discrete topology. The crossed product
\[
 \widehat\M=\M\rtimes_{\sigma^\phi}G
\]
is the von Neumann algebra generated by the operators $\pi_{\sigma^\phi}(x)$, $x \in \M$ and $\lambda(g)$, $g \in G$, which are defined by
\[
(\pi_{\sigma^\phi}(x) \xi )(h) = \sigma^\phi_{-h}(x)\xi(h), \quad (\lambda(g)\xi)(h) = \xi(h - g), \quad \xi \in L_2(G; H), \ h \in G.
\]
Since $\pi_{\sigma^\phi}$ is a normal faithful representation of $\M$ on $L_2(G; H)$, we will identify $\M$ as $\pi_{\sigma^\phi}(\M)$ in the sequel.
Thus
$$ \lambda(g) x \lambda(g)^*=\sigma_g^\phi(x) , \quad x\in\M, \ g\in G . $$
There is a normal faithful conditional expectation
$E_\M:\widehat\M\to\M$ determined by
\[
 E_\M(x \lambda(g))=
 \begin{cases}
 x,&g=0,\\
 0,&g\neq0,
 \end{cases}
\]
and $\widehat\phi=\phi\circ E_\M$ is a normal faithful state on $\widehat\M$.

By \cite[Theorem 2.1]{HJX} and \cite[Section 4.2]{GJLL2025}, there exists an increasing sequence of finite
von Neumann subalgebras $(\M_m)_{m\geq1}$ of $\widehat\M$, and each $\M_m$ is equipped with a normal faithful tracial state $\tau_m$ such that
\begin{enumerate}
      \item $\cup_{m \geq 1} \M_m$ is $w^*$ dense in $\widehat{\M}$;
      \item For each $m \geq 1$, there is a normal faithful conditional expectation $\Phi_m:\widehat\M\to\M_m$ such that $\widehat\phi\circ\Phi_m=\widehat\phi$ and
\[
 \Phi_m\circ\sigma_s^{\widehat\phi}
 =\sigma_s^{\widehat\phi}\circ\Phi_m, \, \forall s\in \mathbb{R};
 \quad
 \Phi_m(x)\to x
 \ \text{$\sigma$-strongly for all } x\in \widehat{\M};
\]
      \item For each $m \geq 1$, there is a density $d_m \in \mathcal{L}(G) \subseteq \mathcal{Z}(\widehat{M}_{\widehat{\phi}})$ such that
\[
\tau_m(d_m \cdot) = \widehat{\phi}|_{\M_m}(\cdot) =: \phi_m(\cdot) .
\]
Furthermore, $a_m^{-1} \leq d_m \leq a_m$ for some $a_m \in \real_+$;
      \item Let $\psi$ be a state on $\M$, $\widehat{\psi}$ be its extension on $\widehat{\M}$ and $\psi_m = \widehat{\psi}|_{\M_m}$. Then $\psi_m \circ \Phi_m \to \widehat{\psi}$ in the weak topology on $\widehat{\M}$.
\end{enumerate}

The construction realizes $\M_m$ as the centralizer of a bounded
faithful normal positive functional whose modular group has period
$2^{-m}$.  $\Phi_m$ is obtained by averaging over one period, while restriction to the centralizer gives a finite trace on $\M_m$. The dyadic construction ensures that the centralizers are increasing and have weak$^*$-dense
union.  In particular, after the canonical isometric identifications,
\[
 L_p(\M)\subseteq L_p(\widehat\M),
 \quad
 \overline{\bigcup_{m\geq1}L_p(\M_m,\tau_m)}^{\,L_p}
 =L_p(\widehat\M),
 \quad 1\leq p<\infty.
\]
The induced maps $\Phi_{m,p}$ are contractions and satisfy
$\Phi_{m,p}(a)\to a$ in $L_p(\widehat\M)$; see \cite{HJX} for more details.
% Here the tracial state $\tau_m$ need not coincide with $\widehat\phi|_{\M_m}$.

The GNS detailed balance condition implies
$T_t\sigma_s^\phi=\sigma_s^\phi T_t$ for any $t\geq 0$ and $s\in\mathbb{R}$.  Hence $(T_t)$ and $E^\phi$
admit canonical extensions to $\widehat\M$, determined by
\[
 \widehat T_t  = T_t \otimes {\rm id}_{B(\ell_2(G))}|_{\widehat{\M}},
 \quad
 \widehat{E}^\phi = E^\phi \otimes {\rm id}_{B(\ell_2(G))}|_{\widehat{\M}}.
\]
Then the fixed-point subalgebra is $\widehat{\M}^{\widehat{T}} = \M^T \rtimes_{\sigma^\phi} G$. Moreover, $(\widehat{T}_t)$ is still a quantum Markov semigroup satisfying GNS detailed balance with respect to $\widehat{\phi}$; see \cite{GJLL2025}.
These extensions commute with every $\Phi_m$.  Moreover,
\[
 T_t^{(m)}=\widehat T_t|_{\M_m}
\]
is a $\tau_m$-symmetric quantum Markov semigroup, and
$E_m=\widehat{E}^\phi|_{\M_m}$ is its fixed-point expectation;
see \cite[Proposition~4.3]{HJX} and
\cite[Lemma 4.5]{GJLL2025}.  Thus, identifying
$x \in L_p(\M)$ with its image in $L_p(\widehat\M)$ and setting
$x_m=\Phi_{m,p}(x)$, we obtain
\[
 x_m-E_{m,p}(x_m)
 =\Phi_{m,p}(x-E_p^\phi (x))
 \to x-E_p^\phi (x)
 \quad\text{in }L_p(\widehat{\M}),
\]
where we use $ \widehat{E}^\phi \circ  \Phi_m= \Phi_m \circ \widehat{E}^\phi$ for all $m$; see \cite[Subsection 6.1]{HJX}.

\iffalse
The reduction argument is therefore to apply the tracial
$L_p$-Poincar\'e inequality to $a_m$ and then let $m\to\infty$.
Two additional ingredients are required: the gradients of the
approximants must converge in the sense specified below, and the
functional inequality used on the slices must have a constant
independent of $m$.  In the MLSI case, if every slice satisfies
\eqref{eq:mlsi} with the same constant $\alpha$, the tracial
coefficient
\[
 C_p(\alpha)
 =\frac{2^{3/2+1/p}(p-1)}{\sqrt{\alpha p}},
 \qquad p>2,
\]
passes to the limit without any additional factor.
The entropy approximation lemmas above do not by themselves establish
this uniform slice hypothesis from \eqref{eq:state-MLSI} on $\M$.
We formulate it separately as reduction-stability below.
\fi

Fix $2 \leq p < \infty$. To state the required gradient approximation precisely, we adopt the following  gradient-domain convention; see also \cite[Definition~4.13 and Remark~4.19]{JW}.
% The generator formula for $\Gamma_p(a,b)$ is used on $\mathfrak D_p$. For general self-adjoint elements of $L_p(\M)$, the gradient seminorm is understood through the lower semicontinuous relaxation described there. All Poincar\'e inequalities below are stated on the finite domain of this relaxed seminorm.
Denote by $A_p$ the generators of $(T_{t,p})$, and define
\[
 \mathfrak D_p=
 \{(x,y):x,y\in\Dom(A_p),\ x^*y\in\Dom(A_{p/2})\}.
\]
For $(x,y)\in\mathfrak D_p$, define the carr\'{e} du champ as
\begin{equation}
 \Gamma_p(x,y)=\frac12\bigl(
  A_p(x^*)y+x^*A_p(y)-A_{p/2}(x^*y)
  \bigr)\in L_{p/2}(\M).
 \label{eq:Haagerup-gamma}
\end{equation}
On the self-adjoint analytic core
\[
 \mathcal C_{p,\phi}=
 \{\iota_p(x):x\in {\rm dom}(A) \cap \M_{\rm sa} \text{ is entire}, \ (\iota_p(x), \iota_p(x)) \in \mathfrak D_p\},
\]
we define
\begin{equation}
 \| x \|_{\Gamma_p}=\|\Gamma_p(x, x) \|_{L_{p/2}(\M)}^{1/2}.
 \label{eq:Haagerup-gradient-seminorm}
\end{equation}
For a general self-adjoint $x\in L_p(\M)$, define the lower semicontinuous
extension by
\[
 \| x \|_{\Gamma_p}=
 \inf_{\substack{x_j\in\mathcal C_{p,\phi}\\x_j\to x\ {\rm in}\ L_p(\M)}}
 \liminf_{j\to\infty} \,\| x_j \|_{\Gamma_p},
\]
with the value $+\infty$ if no such sequence exists. 

\subsection{Regularization and $L_p$-Poincar\'{e} inequalities}
At first, we show a regularization lemma. For $\varepsilon>0$, define
\[
A_\varepsilon = A(I + \varepsilon A)^{-1} = \frac{1}{\varepsilon} \int_0^\infty e^{-s} (I - T_{\varepsilon s})\,\d s.
\]
Then $e^{-tA_\varepsilon}$ is a quantum Markov semigroup with the fixed-point subalgebra $\M^T$. Moreover, the extension $A_{\varepsilon, p}$ is bounded for $1\leq p<\infty$. For $2 \leq p < \infty$, define
\[
\Gamma_{\varepsilon, p}(x, y) = \frac{1}{2} \sk{ A_{\varepsilon, p}(x^*)y+x^*A_{\varepsilon, p}(y)-A_{\varepsilon, p/2}(x^*y) }
\]
for all $x, y \in L_p(\M)$. 

\begin{lemma}\label{lem:regular}
      If $(T_t)$ satisfies \eqref{eq:GNS-spectral-gap} with the constant $\lambda>0$, then $(e^{-tA_\varepsilon})$ also satisfies \eqref{eq:GNS-spectral-gap}, with the constant
      \[
      \lambda_\varepsilon \geq \frac{\lambda}{ 1 + \varepsilon \lambda }.
      \]
      If $(T_t)$ satisfies \eqref{eq:state-MLSI} with the constant $\alpha$, then  $(e^{-tA_\varepsilon})$ also satisfies \eqref{eq:state-MLSI}, with the constant
      \[
      \alpha_\varepsilon \geq \frac{\alpha}{ 1 + \varepsilon \alpha }.
      \]
\end{lemma}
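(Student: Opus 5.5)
The plan is to compare the two generators through the operator function $f_\varepsilon(u)=u/(1+\varepsilon u)$, which is concave and increasing on $[0,\infty)$ with $f_\varepsilon(u)\geq u/(1+\varepsilon u)$. Both $A$ and $A_\varepsilon$ are self-adjoint in the KMS/GNS inner product, since GNS detailed balance makes $A$ self-adjoint on $L_2(\M,\phi)$. They also have the same kernel, namely $\M^T$, so both fixed-point expectations coincide with $E^\phi$.

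\emph{Spectral gap.} On the GNS Hilbert space $L_2(\M,\phi)$, the gap hypothesis \eqref{eq:GNS-spectral-gap} says that the spectrum of $A$ restricted to $(\ker A)^\perp$ lies in $[\lambda,\infty)$. By the spectral theorem, the spectrum of $A_\varepsilon=f_\varepsilon(A)$ on the same subspace lies in $f_\varepsilon([\lambda,\infty))\subseteq[\lambda/(1+\varepsilon\lambda),1/\varepsilon)$, because $f_\varepsilon$ is increasing. Hence
\[
\phi(x^*A_\varepsilon x)\geq \frac{\lambda}{1+\varepsilon\lambda}\,\|x-E^\phi x\|_{2,\phi}^2 .
\]
This part is routine.

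\emph{MLSI.} The idea is to use entropy decay along the semigroup rather than comparing Fisher informations pointwise, since $I_{A_\varepsilon}$ is not a function of $I_A$. MLSI with constant $\alpha$ is equivalent to the exponential decay
\[
D(T_{t*}\psi\|E^\phi_*\psi)\leq e^{-\alpha t}D(\psi\|E^\phi_*\psi).
\]
One direction comes from differentiating, using $E^\phi_*T_{t*}=E^\phi_*$ and $\frac{d}{dt}D=-I_A$. The reverse direction comes from differentiating at $t=0$. The resolvent formula gives
\[
e^{-tA_\varepsilon}=e^{-t/\varepsilon}\sum_{k\geq0}\frac{(t/\varepsilon)^k}{k!}\,(I+\varepsilon A)^{-k},
\qquad
(I+\varepsilon A)^{-1}=\int_0^\infty e^{-s}T_{\varepsilon s}\,\d s .
\]
Here $(I+\varepsilon A)^{-1}$ is a convex combination of the $T_r$. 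Joint convexity of relative entropy, with the common second argument $E^\phi_*\psi$, then yields
\[
D\bigl((I+\varepsilon A)^{-1}_*\psi\,\|\,E^\phi_*\psi\bigr)\leq\int_0^\infty e^{-s}e^{-\alpha\varepsilon s}\,\d s\,D=\frac{1}{1+\varepsilon\alpha}D .
\]
Iterating $k$ times and summing the Poisson series gives
\[
D(e^{-tA_{\varepsilon *}}\psi\|E^\phi_*\psi)\leq e^{-t/\varepsilon}\exp\Bigl(\frac{t}{\varepsilon(1+\varepsilon\alpha)}\Bigr)D=\exp\Bigl(-\frac{\alpha t}{1+\varepsilon\alpha}\Bigr)D .
\]
This is exponential decay at rate $\alpha/(1+\varepsilon\alpha)$. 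Differentiating at $t=0$ then recovers \eqref{eq:state-MLSI} for $A_\varepsilon$ with $\alpha_\varepsilon\geq\alpha/(1+\varepsilon\alpha)$.

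\emph{Main obstacle.} The hardest step is the rigorous equivalence between MLSI and entropy decay in the non-tracial Haagerup setting. This needs:
\begin{itemize}
\item differentiability of $t\mapsto D(T_{t*}\psi\|E^\phi_*\psi)$ with derivative $-I_A$, for $\psi$ whose $E^\phi_*(\psi)$ lies in the regular class $B(\psi)$;
\item invariance of that class under $T_{t*}$, which holds because $T_{t*}$ is positive and preserves $E^\phi_*$;
\item the chain rule \eqref{eq:entropy-chain}, so the reference state may be taken fixed.
\end{itemize}
For the forward direction I would integrate $\frac{d}{dt}D\leq-\alpha D$ (Grönwall), first on the dense set $B(\phi)\cap{\rm dom}(A_*)$. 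I would then extend by lower semicontinuity of relative entropy. Joint convexity of Araki relative entropy is standard and poses no difficulty. Since $A_\varepsilon$ is bounded, the derivative at $0$ of $e^{-tA_{\varepsilon *}}$ causes no domain issues.
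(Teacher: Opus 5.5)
Your argument is correct. The spectral-gap half is the paper's proof in different clothing. The paper writes $A_\varepsilon=\varepsilon^{-1}\int_0^\infty e^{-s}(I-T_{\varepsilon s})\,\d s$ and inserts $\phi(|T_{\varepsilon s/2}x|^2)\le e^{-\lambda\varepsilon s}\phi(x^2)$. That is just your monotonicity of $u\mapsto u/(1+\varepsilon u)$ under the spectral theorem.

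For MLSI you share the paper's key step. With $\sigma=(I+\varepsilon A)^{-1}_*\rho=\int_0^\infty e^{-s}T_{\varepsilon s*}\rho\,\d s$, entropy decay of $(T_t)$ plus convexity gives $D(\sigma\|E^\phi_*\rho)\le(1+\varepsilon\alpha)^{-1}D(\rho\|E^\phi_*\rho)$. After that you take a detour. The paper never constructs $e^{-tA_\varepsilon}$. It uses $A_{\varepsilon*}\rho=\varepsilon^{-1}(\rho-\sigma)$ and the chain rule \eqref{eq:entropy-chain} in the form
\[
(\rho-\sigma)(\log h_\rho-\log h_{E^\phi_*\rho})=D(\rho\|E^\phi_*\rho)-D(\sigma\|E^\phi_*\rho)+D(\sigma\|\rho)\ge\Bigl(1-\frac{1}{1+\varepsilon\alpha}\Bigr)D(\rho\|E^\phi_*\rho).
\]
Dividing by $\varepsilon$ gives the Fisher inequality for $A_\varepsilon$ in one line.

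You instead iterate, sum the Poisson series to get decay of $e^{-tA_\varepsilon}$ at rate $\alpha/(1+\varepsilon\alpha)$ for all $t$, and then differentiate at $t=0$. This gives an explicit global decay statement for the regularized semigroup. But to first order in $t$ only the terms $k=0,1$ survive, since $e^{-tA_\varepsilon}=(1-t/\varepsilon)I+(t/\varepsilon)(I+\varepsilon A)^{-1}+O(t^2)$. So you end up using exactly the one-step estimate above.

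For the same reason, the obstacle you flag for the direction from decay to MLSI is milder than it looks. You do not need differentiability of $t\mapsto D(\psi_t\|E^\phi_*\psi)$. The chain rule \eqref{eq:entropy-chain} gives $D(\psi_t\|E^\phi_*\psi)-D(\psi\|E^\phi_*\psi)\ge(\psi_t-\psi)(\log h_\psi-\log h_{E^\phi_*\psi})$. Combine this with the decay bound, divide by $t$, and let $t\downarrow0$; this is legitimate because $A_\varepsilon$ is bounded. You obtain $I_{A_\varepsilon}\ge\alpha_\varepsilon D$ directly, which is the paper's computation.

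Likewise, the paper gets convexity over the continuous mixture from the same chain rule, not from finite joint convexity. If you keep your route, you should:
\begin{itemize}
\item justify the infinite Poisson mixture in the same way (or via lower semicontinuity);
\item note that $E^\phi_*\psi\in B((I+\varepsilon A)^{-k}_*\psi)$, by positivity and $E^\phi(I+\varepsilon A)^{-1}=E^\phi$, so the decay hypothesis applies at every iteration.
\end{itemize}
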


\begin{proof}
We first deal with the case of spectral gap. It is known that \eqref{eq:GNS-spectral-gap} is equivalent to the following $L_2$-exponential decay: for self-adjoint $x \in {\rm dom}(A)$ with $E^\phi(x) = 0$,
\[
\phi( |T_t(x)|^2 ) \leq e^{-2\lambda t} \phi(x^2), \quad \forall\, t \geq 0.
\]
Since $(T_t)$ is GNS detailed balanced, by \eqref{eq:GNS-spectral-gap}, we obatin
\[
\begin{aligned}
\phi(x A_\varepsilon x) & = \frac{1}{\varepsilon} \int_0^\infty e^{-s} \sk{ \phi(x^2) - \phi(x T_{\varepsilon s}x) } \,\d s \\
& = \frac{1}{\varepsilon} \int_0^\infty e^{-s} \sk{ \phi(x^2) - \phi( | T_{\varepsilon s/2} (x) |^2 ) } \,\d s \\
& \geq \frac{1}{\varepsilon} \phi(x^2) \int_0^\infty e^{-s}(1 - e^{-\lambda \varepsilon s}) \,\d s \\
& = \frac{\lambda}{ 1 + \varepsilon \lambda } \phi(x^2).
\end{aligned} 
\]
This verifies \eqref{eq:GNS-spectral-gap} for $A_\varepsilon$.

As for the MLSI case, it follows from \cite[Theorem 2.7]{Wirth2025} that \eqref{eq:state-MLSI} is equivalent to: for any $\rho \in {\rm dom}(A_*)$ with $E^\phi_*(\rho) \in B(\rho)$ faithful,  
\begin{equation}\label{eq:entropy-decay}
      D( T_{t*}(\rho) \| E^\phi_*(\rho) ) \leq e^{-\alpha t} D(\rho \| E^\phi_*(\rho)), \quad \forall\, t \geq 0.
\end{equation}
Since $T_t$ is positive and $E^\phi_*(\rho) \in B(\rho)$, we have
\[
a^{-1} T_{t*}(\rho) \leq T_{t*}E^\phi_*(\rho) \leq a T_{t*}(\rho) 
\]
for some $a \geq 1$. Hence $E^\phi_*(\rho) \in B(T_{t*}(\rho))$. Let
\[
\sigma = (I - \varepsilon A_{\varepsilon *})(\rho) = \int_0^\infty e^{-s} T_{\varepsilon s *}(\rho) \,\d s. 
\]
By \eqref{eq:entropy-chain}, one has
\[
D(T_{\varepsilon s*}(\rho) \| E^\phi_*(\rho)) = D( T_{\varepsilon s*}(\rho) \| \sigma ) + T_{\varepsilon s*}(\rho)( \log h_\sigma - \log h_{E^\phi_*(\rho)} ). 
\]
By multiplying $e^{-s}$ to both sides and integrating in $s$, we obtain 
\begin{align*}
\int_0^\infty e^{-s} D( T_{\varepsilon s*}(\rho) \| E^\phi_*(\rho)) \,\d s & = \int_0^\infty e^{-s} D( T_{\varepsilon s*}(\rho) \| \sigma) \,\d s \\
& \qquad + \sigma( \log h_\sigma - \log h_{E^\phi_*(\rho)} ) \\
& \geq D(\sigma \| E^\phi_*(\rho)). 
\end{align*}
Therefore, by \eqref{eq:entropy-decay}
\begin{align}\label{eq:1-convex}
D(\sigma \| E^\phi_*(\rho)) \leq \int_0^\infty e^{-s} e^{-\alpha \varepsilon s} D( \rho \| E^\phi_*(\rho)) \,\d s = \frac{1}{1 + \varepsilon \alpha} D( \rho \| E^\phi_*(\rho)).
\end{align}
By \eqref{eq:entropy-chain} again,
\[
D(\sigma \| E^\phi_*(\rho)) = D(\sigma \| \rho) + \sigma ( \log h_\rho - \log h_{E^\phi_*(\rho)} ).
\]
This implies
\begin{align*}
      (\rho - \sigma)( \log h_\rho - \log h_{E^\phi_*(\rho)} ) & = D(\rho \| E^\phi_*(\rho)) - D(\sigma \| E^\phi_*(\rho)) + D(\sigma \| \rho) \\
      & \geq D(\rho \| E^\phi_*(\rho)) - D(\sigma \| E^\phi_*(\rho)).
\end{align*}
Plugging \eqref{eq:1-convex} into it, we obtain
\begin{align*}
I_{A_\varepsilon}(\rho \| E^\phi_*(\rho)) & = A_{\varepsilon *}(\rho) ( \log h_\rho - \log h_{E^\phi_*(\rho)} ) \\
& = \frac{1}{\varepsilon} (\rho - \sigma)( \log h_\rho - \log h_{E^\phi_*(\rho)} ) \\
& \geq \frac{1}{\varepsilon} (1 - \frac{1}{1 + \varepsilon \alpha})  D(\rho \| E^\phi_*(\rho)) \\
& = \frac{\alpha}{1 + \varepsilon \alpha} D(\rho \| E^\phi_*(\rho)).
\end{align*}
This completes the proof.
\end{proof}

We come to the following $L_p$-Poincar\'{e} inequalities for quantum Markov semigroups satisfying GNS detailed balance.

\begin{theorem}
\label{thm:nontracial-poincare}
Let $2 \leq p < \infty$. Let $(\M,\phi)$, $(T_t)$, and $E^\phi$ be as above. If $(T_t)$ satisfies \eqref{eq:GNS-spectral-gap} with the constant $\lambda > 0$, then for any self-ajoint $x \in L_p(\M)$,
\begin{equation}
 \|x - E_p^\phi (x)\|_{L_p(\M)}
 \leq
 \frac{\sqrt{2}p }{\sqrt\lambda}\,\| x \|_{\Gamma_p}.
 \label{eq:nontracial-gap-p}
\end{equation}
If the canonical extension $(\widehat{T}_t)$ satisfies \eqref{eq:state-MLSI} on $\widehat{\M}$ with the constant $\alpha > 0$, then for any self-ajoint $x \in L_p(\M)$,
\begin{equation}
 \|x - E_p^\phi (x)\|_{L_p(\M)}
 \leq
 \frac{ 2\sqrt{2p} }{\sqrt{\alpha}}\,\| x \|_{\Gamma_p}.
 \label{eq:nontracial-mlsi-p}
\end{equation}
\end{theorem}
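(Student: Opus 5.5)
The plan is a Haagerup reduction to the tracial Theorem~\ref{Main-p-PI}, combined with the regularization of Lemma~\ref{lem:regular} to remove domain issues. By the definition of $\|x\|_{\Gamma_p}$ as a lower semicontinuous relaxation, it suffices to prove \eqref{eq:nontracial-gap-p} and \eqref{eq:nontracial-mlsi-p} for $x\in\mathcal C_{p,\phi}$. Indeed, if $x_j\to x$ in $L_p(\M)$, then $x_j-E^\phi_p x_j\to x-E^\phi_p x$ by contractivity of $E^\phi_p$, so the left side passes to the limit and the right side gives the $\liminf$. Fix therefore $x=\iota_p(x_0)$ with $x_0$ entire.

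First I would replace $A$ by $A_\varepsilon$. Its $L_p$ extensions are bounded, so every element of $L_p$ lies in the domain and $\Gamma_{\varepsilon,p}$ is defined everywhere. The fixed-point algebra is still $\M^T$, so $E^\phi$ is unchanged. By Lemma~\ref{lem:regular}, the spectral gap constant becomes $\lambda/(1+\varepsilon\lambda)$ and the MLSI constant becomes $\alpha/(1+\varepsilon\alpha)$. In the MLSI case the lemma is applied to $\widehat T_t$ on $\widehat\M$.

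Next, pass to $\widehat\M$ and to the finite slices $\M_m$. Set $x_m=\Phi_{m,p}(x)$. On $(\M_m,\tau_m)$ the semigroup $e^{-tA_\varepsilon}$ restricted to $\M_m$ is $\tau_m$-symmetric, with fixed-point expectation $E_m$.
\begin{itemize}
\item For PI: the spectral gap on $\M$ transfers to $\widehat\M$, since $\widehat T_t=T_t\otimes\id$ and $\widehat E^\phi=E^\phi\otimes\id$. It then restricts to $\M_m$ with the same constant, but only with respect to the $\widehat\phi$-norm, i.e.\ with density $d_m$.
\item For MLSI: I would use Lemma~\ref{lem:Entropy-ce}(i) together with the fact that $E_m$ is the fixed-point expectation. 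This restricts the MLSI on $\widehat\M$ to $\M_m$ with the same constant, again relative to $\phi_m=\tau_m(d_m\,\cdot)$.
\end{itemize}

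The main obstacle is converting these inequalities, stated for $\phi_m$, into the tracial inequalities for $\tau_m$ required by Theorem~\ref{thm:single-defect} and Propositions~\ref{prop:gap-to-q}, \ref{prop:mlsi-to-q}. The key fact is that $d_m\in\mathcal L(G)$ lies in the center of the centralizer and commutes with the semigroup action. Consequently the tracial Dirichlet form and the $\phi_m$-form are related through the commuting density. Since the spectral gap and MLSI are conditions relative to the fixed-point expectation, and $d_m$ is fixed by $\widehat T$ (it lies in $\widehat\M^{\widehat T}$), I would argue as in \cite[Section~4.6]{JW} that the constants are unchanged.

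Applying the tracial theorem on $\M_m$ to the selfadjoint bounded element corresponding to $x_m$ then gives
\[
\|x_m-E_m x_m\|_p\le C\,\|\Gamma_{\varepsilon,p}^{(m)}(x_m,x_m)\|_{p/2}^{1/2},
\]
with $C=\sqrt2p/\sqrt{\lambda_\varepsilon}$ or $C=2\sqrt{2p}/\sqrt{\alpha_\varepsilon}$.

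It remains to take limits. As $m\to\infty$, the left side converges to $\|x-E^\phi_p x\|_p$. For the right side, $A_{\varepsilon,p}$ is bounded, $\Phi_{m,p}\to\id$ strongly on $L_p$ and $L_{p/2}$, and $\Phi_m$ commutes with $\widehat T$. These facts give $\Gamma_{\varepsilon,p}(x_m,x_m)\to\Gamma_{\varepsilon,p}(x,x)$ in $L_{p/2}(\widehat\M)$; the multiplication is continuous $L_p\times L_p\to L_{p/2}$.

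Finally let $\varepsilon\to0$. Then $\lambda_\varepsilon\to\lambda$ and $\alpha_\varepsilon\to\alpha$. For entire $x_0$ in the domain, $A_{\varepsilon,p}x\to A_px$ and $A_{\varepsilon,p/2}(x^2)\to A_{p/2}(x^2)$, by the resolvent convergence $(I+\varepsilon A)^{-1}\to I$ on each $L_r$. Hence $\Gamma_{\varepsilon,p}(x,x)\to\Gamma_p(x,x)$ in $L_{p/2}$, which yields the stated constants.
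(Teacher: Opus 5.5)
Your proposal is correct and follows essentially the same route as the paper. Both arguments reduce to the core $\mathcal C_{p,\phi}$ by lower semicontinuity, regularize with $A_\varepsilon$ via Lemma~\ref{lem:regular}, restrict to the Haagerup slices $(\M_m,\tau_m)$, apply the tracial Theorem~\ref{Main-p-PI}, and then let $m\to\infty$ (using boundedness of $\Gamma_{\varepsilon,p}$) and finally $\varepsilon\to0$.

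One remark: the obstacle you flag is lighter than it looks in the MLSI case. Relative entropy does not depend on a reference state, so entropy decay passes directly to a genuine tracial MLSI on $\M_m$ through Lemma~\ref{lem:Entropy-ce}(i) and the intertwining $\Phi_{m*}T^{(m)}_{t,\varepsilon*}=\widehat T_{t,\varepsilon*}\Phi_{m*}$. The $\widehat T$-fixed density $d_m$ is needed only to identify Haagerup norms and gradients with tracial ones through $d_m^{1/2p}x_md_m^{1/2p}$, which is what the paper does in its Step~2.
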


\begin{proof}
The proof of the first assertion is as same as that in
\cite[Section~4.6]{JW}. We will only focus on the proof for the second assertion. It suffices to prove \eqref{eq:nontracial-mlsi-p} for $x \in \mathcal{C}_{p, \phi}$. The proof follows from the similar arguments as in \cite[Section~4.6]{JW}. We divide this proof into three steps.

\medskip

\noindent\emph{\textbf{Step 1}: $L_p$-Poincar\'{e} inequalities for $(T^{(m)}_{t, \varepsilon})$ on each $(\M_m, \tau_m)$.} Let $(\M_m, \tau_m)_{m \geq 1}$ be the increasing family of von Neumann subalgebras given as before. Fix $\varepsilon > 0$ and let $ \alpha_\varepsilon = \alpha/(1 + \varepsilon \alpha)$. Let $(T_{t, \varepsilon})$ be the quantum Markov semigroup generated by $A_\varepsilon$ and $\widehat{T}_{t, \varepsilon}$ be its extension to $\widehat{\M}$. By Lemma \ref{lem:regular}, $(\widehat{T}_{t, \varepsilon})$ satisfies \eqref{eq:state-MLSI} with the constant $\alpha_\varepsilon$. We show that each restriction $(T^{(m)}_{t, \varepsilon})$ also satisfies \eqref{eq:state-MLSI} on $(\M_m, \tau_m)$. 

Let $\Phi_{m *} : \M_{m*} \to \widehat{\M}_*$ be defined by $\Phi_{m *} (\rho) = \rho \circ \Phi_m$. We have
\[
\Phi_{m*} T_{t, \varepsilon *}^{(m)} = \widehat{T}_{t, \varepsilon *} \Phi_{m*}, \quad  \Phi_{m*} E_{m*} = \widehat{E}^\phi_{*} \Phi_{m*}.
\]
Then Lemma \ref{lem:Entropy-ce} and \eqref{eq:entropy-decay} yields that
\begin{align*}
D(T_{t, \varepsilon *}^{(m)} (\rho) \| E_{m*}(\rho) ) &
%  = D( x \circ T_{t, \varepsilon}^{(m)} \| x \circ E_m) \\ & = D( x \circ T_{t, \varepsilon}^{(m)} \circ \Phi_m \| x \circ E_m \circ \Phi_m ) \\  & = D( x \circ \Phi_m \circ \widehat{T}_{t, \varepsilon} \| x \circ \Phi_m \circ \widehat{E} ) \\
= D( \Phi_{m*} T_{t, \varepsilon *}^{(m)} (\rho) \| \Phi_{m*} E_{m*}(\rho) )\\
 & = D( \widehat{T}_{t, \varepsilon *} \Phi_{m*}(\rho) \| \widehat{E}^\phi_* \Phi_{m*}(\rho) ) \\
 & \leq e^{-t\alpha_\varepsilon }  D( \Phi_{m*}(\rho) \| \widehat{E}^\phi_* \Phi_{m*}(\rho) ) \\
 & = e^{-t\alpha_\varepsilon }  D( \rho \| E_{m*}(\rho) ) .
\end{align*}
Hence each $(T^{(m)}_{t, \varepsilon})$ also satisfies \eqref{eq:state-MLSI} with the constant $\alpha_\varepsilon$. Since $(\M_m, \tau_m)$ is tracial, it follows from Theorem \ref{Main-p-PI} that, for every self-adjoint $x \in L_p(\M_m, \tau_m)$,
\begin{equation}\label{eq:reduction-trace}
\| x - E_m(x) \|_{L_p(\M_m, \tau_m)} \leq \frac{ 2\sqrt{2 p } }{ \sqrt{\alpha_\varepsilon} } \| \Gamma_{\varepsilon}^{(m)} (x, x) \|_{L_{p/2}(\M_m ,\tau_m)}^{1/2},
\end{equation}
where $\Gamma_{\varepsilon}^{(m)}$ denotes the associated carr\'e du champ. 
%$A_\varepsilon^{(m)}$ denotes the generator of $T^{(m)}_{t, \varepsilon}$ and

\medskip

\noindent\emph{\textbf{Step 2}: $L_p$-Poincar\'{e} inequalities for $(T_{t, \varepsilon})$ on $\M$.}
We now employ the limiting argument. Note that the density $d_m \in \mathcal{L}(G) $ belongs to the fixed-point subalgera of $T^{(m)}_{t, \varepsilon}$. Then for any $u, v \in \real$ and $x\in \M_m$,
\[
T^{(m)}_{t, \varepsilon}(d_m^{u}xd_m^{v} ) = d_m^u T^{(m)}_{t, \varepsilon}(x) d_m^v, \quad E_m(d_m^u x d_m^v) = d_m^u E_m(x) d_m^v.
\]
The tracial and Haagerup $L_p$-norms are isometric, and hence for $x\in \M_m$
\[
\| d_m^{1/2p}xd_m^{1/2p} \|_{L_p(\M_m, \tau_m)} = \| \iota_p^{\phi_m}(x) \|_{L_p(\M_m, \phi_m)}.
\]
Recall that ($\sigma_t^{\phi_m}$) is the modular group of automorphisms with respect to $\phi_m$. Denote by $y_m = \sigma_{-i/2p}^{\phi_m}(x) = d_m^{1/2p} x d_m^{-1/2p}$ the modular shift. Then it is easy to verify
\[
\Gamma_{\varepsilon}^{(m)} (d_m^{1/2p}xd_m^{1/2p},  d_m^{1/2p}xd_m^{1/2p}) = d_m^{1/p} \Gamma_{\varepsilon}^{(m)} (y_m, y_m) d_m^{1/p}.
\]
Now let $\iota_p(x) \in \mathcal{C}_{p, \phi} \subseteq L_p(\M) \subseteq L_p(\widehat{\M})$. Then $x_m:= \Phi_m(x) \in \M_m$, and apply \eqref{eq:reduction-trace} for the element $ d_m^{1/2p} x_m d_m^{1/2p} \in L_p(\M_m, \tau_m) $,
\[
\| d_m^{1/2p} (x_m - E_m(x_m)) d_m^{1/2p} \|_{L_p(\M_m, \tau_m)}\leq \frac{ 2\sqrt{2 p } }{ \sqrt{\alpha_\varepsilon} } \| d_m^{1/p} \Gamma_{\varepsilon}^{(m)} (y_m, y_m) d_m^{1/p} \|_{L_{p/2}(\M_m ,\tau_m)}^{1/2}.
\]
That is
\[
\| \iota_p^{\phi_m}(x_m) - E_{m, p}(\iota_p^{\phi_m}(x_m)) \|_{L_p(\M_m, \phi_m)} \leq \frac{ 2\sqrt{2 p } }{ \sqrt{\alpha_\varepsilon} } \| \Gamma_{\varepsilon, p}^{(m)} (\iota_p^{\phi_m}(x_m), \iota_p^{\phi_m}(x_m)) \|_{L_{p/2}(\M_m ,\phi_m)}^{1/2}.
\]
Note that for $(x, y) \in \mathfrak{D}_p$, 
\begin{align*}
\| \Gamma_{\varepsilon, p}(x, y) \|_{p/2} & \leq \frac{1}{2} \sk{ \| A_{\varepsilon, p}(x)^* y \|_{p/2} + \| x^*A_{\varepsilon, p}(y) \|_{p/2} + \| A_{\varepsilon, p/2}(x^*y) \|_{p/2} } \\
& \leq \frac{3}{\varepsilon} \|x\|_p\|y\|_p < \infty,
\end{align*}
and then by letting $m \to \infty$, we obtain
\[
\| \iota_p(x) - E_p^\phi( \iota_p(x) ) \|_p \leq \frac{ 2\sqrt{2 p } }{ \sqrt{\alpha_\varepsilon} } \| \Gamma_{\varepsilon, p}( \iota_p(x), \iota_p(x) ) \|_{p/2}^{1/2},
\]
as desired.

\medskip

\noindent\emph{\textbf{Step 3}: $L_p$-Poincar\'{e} inequalities for $(T_{t})$ on $\M$.}
Since $\iota_p(x) \in \mathcal{C}_{p, \phi}$, we have $$ A_{\varepsilon, p}( \iota_p(x) ) \to A_p(\iota_p(x)),\quad \text{ in } L_p(\M),$$ 
and
$$ A_{\varepsilon, p/2}( \iota_{p}(x^*)\iota_{p}(x)) ) \to A_{p/2}( \iota_{p}(x^*)\iota_{p}(x)) ),\quad  \text{ in } L_{p/2}(\M). $$ 
Hence $$ \Gamma_{\varepsilon, p}( \iota_p(x), \iota_p(x) ) \to \Gamma_p(\iota_p(x), \iota_p(x)), \quad \text{ in } L_{p/2}(\M). $$ 
By letting $\varepsilon \to 0$, we obtain
\[
\| \iota_p(x) - E_p^\phi( \iota_p(x) ) \|_p \leq \frac{ 2\sqrt{2 p } }{ \sqrt{\alpha} } \| \Gamma_{p}( \iota_p(x), \iota_p(x) ) \|_{p/2}^{1/2} = \frac{ 2\sqrt{2 p } }{ \sqrt{\alpha} } \| \iota_p(x) \|_{\Gamma_p}.
\]

Finally, for general self-adjoint $x \in L_p(\M)$, if $x$ cannot be $L_p$-norm approximated by a sequence in $\mathcal{C}_{p, \phi}$, there is nothing to prove as $\| x \|_{\Gamma_p} = \infty$. Otherwise, we have for any $\delta > 0$, there exists a sequence $(x_j) \subseteq \mathcal{C}_{p, \phi}$ such that
\[
\lim_{j \to \infty} \| x_j - x \|_p = 0, \quad \liminf_{j \to \infty} \| x_j \|_{\Gamma_p} \leq \| x \|_{\Gamma_p} + \delta.
\]
Then
\begin{align*}
\| x - E_p^\phi(x) \|_p & \leq \liminf_{j \to \infty} \| x_j - E_p^\phi(x_j) \|_p \leq \frac{2\sqrt{2p}}{\sqrt{\alpha}} \liminf_{j \to \infty} \| x_j \|_{\Gamma_p} \\
& \leq  \frac{2\sqrt{2p}}{\sqrt{\alpha}} ( \| x \|_{\Gamma_p} + \delta ).   
\end{align*}
By letting $\delta \to 0$, this completes the proof.
\iffalse
If reduction-stable $\mathrm{MLSI}(\alpha)$ is assumed, every slice instead
satisfies Theorem~\ref{thm:mlsi-p} with the same $\alpha$.  The identical
limit argument proves \eqref{eq:nontracial-mlsi-p} and its endpoint on
$\mathcal C_{p,\phi}$.  For $p\geq5$, apply
Theorem~\ref{thm:sharp-high-p-mlsi} on every slice before taking the same
limit; this proves \eqref{eq:nontracial-sharp-high-p}.  The asymptotic
coefficient follows from \eqref{eq:M-p-kappa-asymptotic} and
\eqref{eq:c-zero-Dawson}.

Finally, let $a$ lie in the closed gradient domain and fix $\varepsilon>0$.
By the definition of $G_p^\phi$, there is a sequence
$a_j\in\mathcal C_{p,\phi}$ converging to $a$ in $L_p$ such that, after
passing to a subsequence,
\[
 \lim_{j\to\infty}G_{p,0}^\phi(a_j)
 \leq G_p^\phi(a)+\varepsilon.
\]
Contractivity of $E_p^\phi$ gives
$a_j-E_p^\phi a_j\to a-E_p^\phi a$ in $L_p$.  Applying the appropriate core
estimate, taking the limit, and then letting $\varepsilon\downarrow0$ proves
both assertions on the stated closed domains.
\fi
\end{proof}

We would like to remark that it still remains open whether the MLSI of $(T_t)$ implies the MLSI of $(\widehat{T}_t)$. However, we will show that if $(T_t)$ satisfies CMLSI, then  $(\widehat{T}_t)$ satisfies CMLSI as well.

% At the end of this section, we show that the CMLSI is sufficient for the assumption of our main theorem.

We first show the following auxiliary lemma. Let
\[
    F_n=
    \left\{
        j2^{-n}:j\in\mathbb Z,\,
        -2^{2n}\leq j\leq2^{2n}
    \right\} \subseteq G, \quad n \geq 1.
\]

\begin{lemma}\label{lem:folner-factorization}
There exists a sequence of normal unital completely positive (UCP) maps $\{u_n\}$ and $\{v_n\}$,
 \[ u_n : \widehat{\M} \to \M \,\overline{\otimes}\, B(\ell_2(F_n)), \quad v_n : \M \,\overline{\otimes} B(\ell_2(F_n))\, \to \widehat{\M}, \]
such that
\begin{enumerate}
    \item Suppose that $S: \M \to \M$ is a normal UCP map commuting with $\sigma^\phi_g$ for every $g\in G$.
Let $\widehat S$ be its extension to $\widehat{\M}$ and let $S_n=S \otimes \id_{B(\ell_2(F_n))}$.
Then
\begin{equation}\label{eq:ucp-intertwining}
    u_n\widehat S=S_n u_n,
    \quad
    v_nS_n= \widehat Sv_n.
\end{equation}
    \item Let $\Theta_n = v_nu_n$. Then for any $x\in\M$ and $g\in G$,
    \begin{equation}\label{eq:Theta-n}
    \Theta_n (x\lambda(g)) = a_n(g) x\lambda(g), \quad a_n(g) = \frac{ |F_n \cap (g + F_n)| }{|F_n|}\geq 0.
\end{equation}
    \item For any $\omega \in \widehat{\M}_*$, 
    \begin{equation}\label{eq:predual-approximation}
        \| \omega \circ \Theta_n - \omega \|_{\widehat{\M}_*} \to 0,
    \end{equation}
\end{enumerate}
\end{lemma}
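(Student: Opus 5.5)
The plan is to realize $\widehat{\M}$ concretely on $L_2(G;H)$ and use the standard Følner compression trick for crossed products by amenable groups. Write $P_n$ for the orthogonal projection of $\ell_2(G)$ onto $\ell_2(F_n)$. Every element of $\widehat{\M}$ is an operator on $L_2(G;H)=H\otimes\ell_2(G)$, which is a $\sigma$-weakly convergent sum $\sum_g x_g\lambda(g)$. I would define
\[
u_n(X)=(1\otimes P_n)X(1\otimes P_n)\in \M\,\overline{\otimes}\,B(\ell_2(F_n)).
\]
Since $\widehat{\M}\subseteq \M\,\overline{\otimes}\,B(\ell_2(G))$, this map is normal, completely positive and unital. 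Concretely, on $\pi_{\sigma^\phi}(x)\lambda(g)$ its matrix entries are $u_n(x\lambda(g))_{h,k}=\delta_{h-k,g}\,\sigma^\phi_{-h}(x)$ for $h,k\in F_n$.

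For $v_n$ I would use the dual formula. Set
\[
v_n(Y)=\frac{1}{|F_n|}\sum_{h,k\in F_n}\sigma^\phi_{h}(Y_{h,k})\lambda(h-k),
\]
where $Y=\sum e_{h,k}\otimes Y_{h,k}$. To check that $v_n$ is UCP, I would write $v_n(Y)=W^*(\mathrm{id}\otimes \text{``}\lambda\text{''})(\cdots)W$, with $W$ the column isometry $|F_n|^{-1/2}\sum_h e_h\otimes\lambda(h)^*$. More concretely, $v_n(Y)=\sum_{h,k}V_h^*\,\pi(Y_{h,k})\,V_k$, where $V_h=|F_n|^{-1/2}\lambda(h)^*$ up to the intertwining $\lambda(h)\pi(y)\lambda(h)^*=\pi(\sigma_h(y))$. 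In this form complete positivity is that of a Stinespring compression $V^*\pi^{(n)}(Y)V$, and unitality reduces to $\sum_h V_h^*V_h=1$. Normality is clear.

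\textbf{Item (2).} This is a direct computation: $v_nu_n(x\lambda(g))=|F_n|^{-1}\sum_{h-k=g,\ h,k\in F_n}\sigma_h\sigma_{-h}(x)\lambda(g)$. The number of such pairs is $|F_n\cap(g+F_n)|$, which gives \eqref{eq:Theta-n}. Positivity of $a_n(g)$ is then obvious.

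\textbf{Item (1).} Because $S$ commutes with every $\sigma^\phi_g$, the extension satisfies $\widehat S(x\lambda(g))=S(x)\lambda(g)$, and $S_n$ acts entrywise. Then $u_n\widehat S(x\lambda(g))$ has entries $\delta_{h-k,g}\sigma_{-h}(S(x))=\delta_{h-k,g}S(\sigma_{-h}(x))$, which is $S_nu_n(x\lambda(g))$. Similarly $v_nS_n(Y)=\widehat S v_n(Y)$ entrywise. By normality and linearity this extends from the $\sigma$-weakly dense span to all of $\widehat{\M}$ and $\M\,\overline{\otimes}\,B(\ell_2(F_n))$.

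\textbf{Item (3).} This is the main obstacle. The first step is to observe that $F_n$ is a Følner sequence in $G$: for fixed $g\in 2^{-m}\mathbb Z$ and $n\ge m$, we have $|F_n\triangle(g+F_n)|\le 2\cdot 2^n|g|$ while $|F_n|\sim 2^{2n+1}$, so $a_n(g)\to1$. The function $a_n$ is the positive definite function $g\mapsto\langle \lambda(g)\xi_n,\xi_n\rangle$ with $\xi_n=|F_n|^{-1/2}1_{F_n}$, and $\Theta_n$ is the associated Herz–Schur multiplier, which is UCP. By uniform boundedness ($\|\omega\circ\Theta_n\|\le\|\omega\|$), it then suffices to prove \eqref{eq:predual-approximation} on a norm-dense subset of $\widehat{\M}_*$.

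For the dense subset I would take vector functionals $\omega_{\eta,\zeta}$ with $\eta,\zeta\in L_2(G;H)$ finitely supported, together with their linear span; this is dense because $\widehat{\M}$ acts standardly enough, or else one can pass through the predual of $B(L_2(G;H))$ restricted to $\widehat{\M}$. For such functionals I would realize $\Theta_n$ spatially as $\Theta_n(X)=W_n^*(X\otimes1)W_n$. Here $W_n\eta(h,k)=\xi_n(h-k)\,\eta(h)$, or the analogous Fell-absorption isometry, satisfies $W_n^*(x\lambda(g)\otimes 1)W_n=\langle\lambda(g)\xi_n,\xi_n\rangle x\lambda(g)$. This gives
\[
\omega_{\eta,\zeta}\circ\Theta_n=\omega_{W_n\eta,W_n\zeta}\quad\text{on }\widehat{\M}\otimes 1 .
\]
The estimate $\|\omega_{W_n\eta,W_n\zeta}-\omega_{\eta\otimes\xi',\zeta\otimes\xi'}\|\le\|W_n\eta-\eta\otimes\xi'\|\|\zeta\|+\cdots$ then reduces to $\|\lambda(g)\xi_n-\xi_n\|_2\to0$ for the finitely many $g$ in the supports, which is the Følner property. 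Verifying the exact form of $W_n$ and the density claim is the step I expect to need the most care.
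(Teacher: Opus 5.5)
Your proposal is correct. The construction and items (1)--(2) coincide with the paper's, and item (3) is proved by a genuinely different route.

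Your $v_n(Y)=|F_n|^{-1}\sum_{h,k}\sigma^\phi_h(Y_{h,k})\lambda(h-k)$ is exactly the paper's $|F_n|^{-1}\sum_{h,k}\lambda(h)Y_{h,k}\lambda(k)^*$. Items (1) and (2) are the same matrix-entry computations. Your Stinespring isometry $V=\sum_h e_h\otimes|F_n|^{-1/2}\lambda(h)^*$ supplies the justification of complete positivity that the paper only calls clear.

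The paper proves item (3) in the GNS space of $\widehat{\phi}$:
\begin{itemize}
\item since $\sigma^\phi$ preserves $\phi$, the pieces $x\lambda(g)$ are mutually orthogonal, so $\Theta_n$ is a GNS-contractive, GNS-symmetric multiplier;
\item $a_n(g)\to 1$ then gives $\|\Theta_n f-f\|_{2,\widehat{\phi}}\to 0$ for all $f$;
\item symmetry gives $\|\widehat{\phi}(x^*\cdot)\circ\Theta_n-\widehat{\phi}(x^*\cdot)\|\le\|\Theta_n x-x\|_{2,\widehat{\phi}}$;
\item one concludes by norm density of $\{\widehat{\phi}(x^*\cdot)\}$, which follows from faithfulness and Hahn--Banach.
\end{itemize}

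Your argument is spatial and uses no invariant state. I checked that $(W_n\eta)(h,k)=\xi_n(h-k)\eta(h)$ is an isometry with $W_n^*(X\otimes 1)W_n=\Theta_n(X)$: the inner sum is $\sum_j\overline{\xi_n(j)}\xi_n(j-g)=a_n(g)$, and you extend by normality. Density of finite sums of vector functionals with finitely supported vectors in $\widehat{\M}_*$ is automatic. Normal functionals on $\widehat{\M}\subseteq B(L_2(G;H))$ are restrictions of trace-class functionals, so no standard-form argument is needed.

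What your route buys is a proof valid for arbitrary actions of the amenable group $G$, with an explicit F\o lner rate. The cost is the extra Fell-type isometry. The paper's route is shorter once the invariant state is available, and it is available in the paper's application.

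The one step you should make explicit is the comparison vector, which must depend on $n$: take $\xi'=\xi_n$. Using $F_n=-F_n$,
\[
\|W_n\eta-\eta\otimes\xi_n\|^2=\sum_h\|\eta(h)\|^2\,\|\lambda(h)\xi_n-\xi_n\|_2^2,
\]
which tends to $0$ for finitely supported $\eta$. A fixed $\xi'$ would fail, because $W_n\eta$ tends weakly to $0$ in the second tensor leg.
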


\begin{proof}
We first realize the elements $x\lambda(g)$ in $\widehat{\M}$ as a matrix $( [x \lambda(g)]_{s, t} )_{s, t \in G}$. Note that for any $\xi \in L_2(G; H)$, $x \in \M$, $g \in G$,
\[
(x \lambda(g))\xi (s) = \sum_{t \in G} [ x\lambda(g) ]_{s, t} \xi(t) = \sigma^\phi_{-s}(x)\xi(s - g), \quad s \in G.
\]
Hence
\begin{equation}\label{eq:regular-matrix-entries}
    [ x\lambda(g) ]_{s, t}
    =
    \begin{cases}
        \sigma^\phi_{-s}(x),&s-t=g,\\
        0,&s-t\neq g.
    \end{cases}
\end{equation}
Let $p_n$ be the orthogonal projection from $L_2(G; H)$ onto $L_2(F_n; H)$.  Define
\[
u_n : \widehat{\M} \to \M \,\overline{\otimes}\, B(\ell_2(F_n)), \quad u_n(x \lambda(g)) = p_n x\lambda(g) p_n. 
\]
It is clear that $u_n$ is normal UCP. On the other hand, define
\[
v_n : \M \,\overline{\otimes}\, B(\ell_2(F_n)) \to \widehat{\M}, \quad v_n( ([x]_{s, t})_{s, t \in F_n} ) = \frac{1}{ |F_n| } \sum_{s, t \in F_n} \lambda(s) [x]_{s, t} \lambda(t)^*.
\]
It is also clear that $v_n$ is normal UCP.

For \eqref{eq:ucp-intertwining}, note that
\[
\widehat{S}(x\lambda(g)) = S(x)\lambda(g).
\]
Hence on matrix realization,
\begin{align*}
[u_n \widehat{S} ( x\lambda(g) )]_{s, t} & = [u_n( S(x)\lambda(g) )]_{s, t} = [ \sigma^\phi_{-s} S(x) \delta_{s-t, g}1_{\{s, t \in F_n\}} ]_{s, t} \\
& = [ S\sigma^\phi_{-s}(x) \delta_{s-t, g}1_{\{s, t \in F_n\}} ]_{s, t} = [S_n(u_n(x\lambda(g)))]_{s, t}.
\end{align*}
This proves the first identity of \eqref{eq:ucp-intertwining}. The other follows from a similar computation of $v_n$, and so we omit the details. Note that direct computation shows
\begin{align*}
    \Theta_n(x\lambda(g)) & = \frac{1}{|F_n|} \sum_{\substack{s,t\in F_n\\s-t=g}} \lambda(s) \sigma^\phi_{-s}(x)\lambda(t)^* \\
    & = \frac{1}{|F_n|} \sum_{\substack{s,t\in F_n\\s-t=g}} x \lambda(s-t) = \frac{ |F_n \cap (g + F_n)| }{|F_n|} x\lambda(g).
\end{align*}
This proves \eqref{eq:Theta-n}. It remains to show \eqref{eq:predual-approximation}. Fix $g= 2^{-m}k \in G$, where $k\in\mathbb Z$. Then, for $n$ sufficiently large,
\[
    a_n(g)
    =
    1-\frac{|k|2^{n-m}}{2^{2n+1}+1}
    \to 1, \quad \text{ as } n \to \infty.
\]
Therefore, for any finite Fourier expansion $f = \sum_{g \in G} \widehat{f}(g) x_g \lambda(g)$, by orthogonality,
\begin{align*} 
\| \Theta_n(f) - f\|_{2, \widehat{\phi}}^2 & = \sum_{g \in G} |a_n(g) - 1|^2|\widehat{f}(g)|^2 \phi(x_g^*x_g) \to 0.
\end{align*}
Since the maps $\Theta_n$ are contractions on the GNS Hilbert space, we conclude that
\begin{equation}\label{eq:gns-approximation}
    \|\Theta_n(f)- f \|_{2,\widehat\phi}
    \to 0,
    \quad \forall\, f\in  \widehat{\M}.
\end{equation}
The maps $\Theta_n$ are GNS symmetric, since $a_n(g)$ is real and $a_n(-g) = a_n(g)$. Moreover, for the normal functional $\omega_x(\cdot) := \widehat{\phi}(x^* \cdot)$, we have
\begin{align*}
  \| \omega_x \circ \Theta_n - \omega_x \|_{\widehat{\M}_*} & = \sup_{ \| y \|_{\widehat{\M}} \leq 1 } | \widehat{\phi}( x^*(\Theta_n(y) - y) ) |    \\
  & = \sup_{ \| y \|_{\widehat{\M}} \leq 1 } | \widehat{\phi} ((\Theta_n(x) - x)^*y ) | \leq \| \Theta_n(x) - x \|_{2, \widehat{\phi}}.
\end{align*}
Since $\{\omega_x: x \in \widehat{\M} \}$ is dense in $\widehat{\M}_*$, this implies \eqref{eq:predual-approximation}.
\end{proof}

\begin{remark}\label{rem:folner-factorization}
    Lemma \ref{lem:folner-factorization} holds for any $\phi$-preserving action $\eta : G \to {\rm Aut}(\M)$, with $\sigma^\phi |_{G}$ replaced by $\eta$ throughout the statement and proof. 
\end{remark}

\begin{proposition}
Let $(\M,\phi)$, $(T_t)$, and $E^\phi$ be as above. Let $(\widehat{T}_t)$ be the canonical extension of $(T_t)$ to $\M \rtimes_{\sigma^\phi} G$. If $(T_t)$ satisfies CMLSI with the constant $\alpha$, then $(\widehat{T}_t)$ satisfies CMLSI with the constant $\alpha$.
\end{proposition}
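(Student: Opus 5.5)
We transfer CMLSI from $S_t:=T_t\otimes\id_{B(\ell_2(F_n))}$ (and its further amplifications) to $\widehat T_t$ using the UCP maps $u_n,v_n$ of Lemma~\ref{lem:folner-factorization}, and then let $n\to\infty$. We work with the equivalent entropy-decay form \eqref{eq:entropy-decay} of CMLSI given by \cite[Theorem 2.7]{Wirth2025}. Fix an auxiliary $\sigma$-finite $\N$; replacing $\M$ by $\M\overline\otimes\N$ and $\phi$ by $\phi\otimes\psi_0$ (so the $G$-action becomes $\sigma^\phi\otimes\id$, which is allowed by Remark~\ref{rem:folner-factorization}), we have $\widehat{\M}\overline\otimes\N\cong(\M\overline\otimes\N)\rtimes G$. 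So it suffices to show that $\widehat T_t$ satisfies \eqref{eq:entropy-decay} with rate $\alpha$ on $\widehat\M$ itself, uniformly over this class of amplifications. By hypothesis, $S_t$ on $\M\overline\otimes B(\ell_2(F_n))$ satisfies \eqref{eq:entropy-decay} with rate $\alpha$, with fixed-point expectation $E_n=E^\phi\otimes\id$.

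Apply \eqref{eq:ucp-intertwining} with $S=T_t$, which commutes with $\sigma^\phi_g$ by GNS detailed balance, and with $S=E^\phi$. This gives $u_n\widehat T_t=S_tu_n$, $v_nS_t=\widehat T_tv_n$, and the analogous relations for $\widehat E^\phi$. On preduals: for $\rho\in\widehat\M_*$, put $\omega_n:=\rho\circ v_n$, a normal state on $\M\overline\otimes B(\ell_2(F_n))$. Then
\[
\omega_n\circ S_t=(\widehat T_{t*}\rho)\circ v_n,\qquad \omega_n\circ E_n=(\widehat E^\phi_*\rho)\circ v_n .
\]
Data processing for the relative entropy under the UCP maps $u_n$ and $v_n$ gives
\begin{align*}
D\bigl((\widehat T_{t*}\rho)\circ\Theta_n\,\big\|\,(\widehat E^\phi_*\rho)\circ\Theta_n\bigr)
&\le D\bigl(\omega_n\circ S_t\,\big\|\,\omega_n\circ E_n\bigr)\\
&\le e^{-\alpha t}D(\omega_n\|\omega_n\circ E_n)\\
&\le e^{-\alpha t}D(\rho\|\widehat E^\phi_*\rho).
\end{align*}
Here $\Theta_n=v_nu_n$. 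The first inequality uses $(\cdot)\circ v_n\circ u_n$, the second is the CMLSI hypothesis, and the last is monotonicity under $v_n$. If \eqref{eq:entropy-decay} requires $\omega_n\circ E_n\in B(\omega_n)$, this follows from $\widehat E^\phi_*\rho\in B(\rho)$ together with positivity of $v_n$. Note also that $(\widehat E^\phi_*\rho)\circ\Theta_n=\widehat E^\phi_*(\rho\circ\Theta_n)$, because $\Theta_n$ commutes with $\widehat E^\phi$ and $\widehat T_t$.

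Now let $n\to\infty$. By \eqref{eq:predual-approximation}, $(\widehat T_{t*}\rho)\circ\Theta_n\to\widehat T_{t*}\rho$ and $(\widehat E^\phi_*\rho)\circ\Theta_n\to\widehat E^\phi_*\rho$ in norm. Lower semicontinuity of the Araki relative entropy under weak convergence of both arguments then gives
\[
D(\widehat T_{t*}\rho\|\widehat E^\phi_*\rho)\le e^{-\alpha t}D(\rho\|\widehat E^\phi_*\rho).
\]
By \cite[Theorem 2.7]{Wirth2025} this is MLSI with constant $\alpha$ for $\widehat T_t$. Since the amplified case was reduced to the same argument, we obtain CMLSI.

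The main obstacle is the bookkeeping in the reduction. One must check that the amplification by $\N$ commutes with the crossed-product construction, which amounts to the remark applied to the action $\sigma^\phi\otimes\id$. One must also confirm that the admissibility class in \eqref{eq:state-MLSI}/\eqref{eq:entropy-decay} (states in ${\rm dom}(A_*)$ with faithful $E_*\rho\in B(\rho)$) is preserved under $\rho\mapsto\rho\circ v_n$, or else pass through a density argument, using the density of $B(\sigma)\cap{\rm dom}(A_*)$ together with lower semicontinuity. The entropy inequalities themselves are just data processing combined with the intertwining relations.
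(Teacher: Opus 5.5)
Your proposal is correct and follows essentially the same route as the paper. Both arguments amplify by $\N$ and use the action version of Lemma~\ref{lem:folner-factorization} for $\sigma^\phi\otimes\id_{\N}$. Both then sandwich the entropy decay of $T_t\otimes\id_{\N\overline{\otimes}B(\ell_2(F_n))}$ between data processing under $u_n$ and $v_n$, via the intertwining relations. Finally, both pass to the limit using \eqref{eq:predual-approximation} and lower semicontinuity of relative entropy.

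Your remark about checking that $\rho\circ v_n$ stays in the admissible class, or else using a density argument, addresses a point the paper's proof passes over silently. It is extra care rather than a deviation.
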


\begin{proof}
Fix a $\sigma$-finite von Neumann algebra $\mathcal N$
and a normal faithful state $\psi$ on $\mathcal N$.
Define
\[
    \widetilde{\M} = \M \,\overline{\otimes}\, \mathcal N,
    \quad
    \widetilde{\phi}=\phi\otimes\psi,
    \quad
    \eta_g=\sigma_g^\phi \otimes \id_{\mathcal N}.
\]
This action $\eta$ preserves $ \widetilde{\phi}$. There is a canonical normal $*$-isomorphism
\[
    \widetilde{\M} \rtimes_\eta G
    \cong \widehat{\M} \,\overline{\otimes}\, \mathcal N,
\]
Denote this algebra by $\widehat{\M}_{\mathcal N}$. Denote by
\[
\begin{aligned}
    S_t =T_t \otimes \id_{\mathcal N}, \quad 
    E_{\mathcal N} = E^\phi \otimes \id_{\mathcal N}.
\end{aligned}
\]
All $S_t$ and $E_{\mathcal N}$ are normal UCP commuting with $\eta_g$. We use the action version of Lemma~\ref{lem:folner-factorization}; see Remark~\ref{rem:folner-factorization}, since the action $\eta$ need not coincide with its modular action.

Define
\[
    S_{t, n}=S_t \otimes \id_{ B(\ell_2(F_n)) },
    \quad
    E_{\mathcal N, n} 
       =E_{\mathcal N} \otimes \id_{ B(\ell_2(F_n)) }.
\]
By Lemma~\ref{lem:folner-factorization}, there exist normal UCP maps $u_n$ and $v_n$ such that
\[
    u_n\widehat S_t = S_{t, n}u_n, \quad
    v_nS_{t, n} = \widehat S_t v_n,
\]
and
\[
    u_n\widehat E_{\mathcal N}
    =E_{\mathcal N, n} u_n, \quad
    v_nE_{\mathcal N, n}
       =\widehat E_{\mathcal N}v_n.
\]
Fix a normal state $\rho \in \widehat{\M}_{\mathcal \N*}$, it suffices to show 
\begin{equation}\label{eq:CMLSI-crossed}
      D( \widehat{S}_{t*}(\rho) \| \widehat{E}_{\N*}(\rho) ) \leq e^{-\alpha t} D( \rho \| \widehat{E}_{\N*}(\rho) ) .  
\end{equation}
Define normal states on $ \widetilde{\M} \,\overline{\otimes}\, B(\ell_2(F_n)) $ by $ \rho_n=\rho \circ v_n $.
Then
\begin{align}
    E_{\mathcal N, n *} (\rho_n) = \rho \circ v_n \circ E_{\mathcal N, n} =
    \rho \circ\widehat E_{\mathcal N}\circ v_n
    = \widehat{E}_{\N*}(\rho) \circ v_n.
    \label{eq:middle-reference-state}
\end{align}
Since $\mathcal N  \,\overline{\otimes}\, B(\ell_2(F_n))$ is $\sigma$-finite, and $(T_t)$ satisfies CMLSI, we obtain
\begin{equation}\label{eq:middle-cmlsi}
    D( S_{t, n*}(\rho_n) \| E_{\N, n*}(\rho_n) )
    \leq
    e^{-\alpha t}
    D(\rho_n \|  E_{\N, n*}(\rho_n)).
\end{equation}
By the data processing inequality,
\[
\begin{aligned}
D( \widehat S_{t*}(\rho)\circ\Theta_n \| \widehat E_{\mathcal N*}(\rho)\circ\Theta_n ) & = D( \rho \circ \widehat S_{t}\circ v_n \circ u_n \| \rho \circ \widehat E_{\mathcal N}\circ v_n \circ u_n ) 
\\
& \leq D( \rho \circ \widehat S_{t} \circ v_n \| \rho \circ \widehat E_{\mathcal N}\circ v_n) \\
& = D(S_{t, n*}(\rho_n) \| E_{\mathcal N, n*}(\rho_n)).
\end{aligned}
\]
On the other hand, by \eqref{eq:middle-reference-state},
\[
    D(\rho_n \| E_{\mathcal N, n*}(\rho_n))
    =
    D(\rho\circ v_n \| \widehat{E}_{\N*}(\rho) \circ v_n)
    \leq D( \rho \| \widehat{E}_{\N*}(\rho) ).
\]
Together with \eqref{eq:middle-cmlsi}, these estimates yield
\begin{equation}\label{eq:entropy-sandwich}
D( \widehat S_{t*}(\rho)\circ\Theta_n \| \widehat E_{\mathcal N*}(\rho)\circ\Theta_n ) \leq e^{-\alpha t}D( \rho \| \widehat{E}_{\N*}(\rho) ).
\end{equation}
Applying \eqref{eq:predual-approximation} to both normal states on the left-hand-side of \eqref{eq:entropy-sandwich}, and by the lower semicontinuity of relative entropy, we obtain
\[
\begin{aligned}
    D( \widehat{S}_{t*}(\rho) \| \widehat{E}_{\N*}(\rho) ) 
    &\leq
    \liminf_{n\to\infty}
    D( \widehat S_{t*}(\rho)\circ\Theta_n \| \widehat E_{\mathcal N*}(\rho)\circ\Theta_n ) 
    \\
    &\leq
    e^{-\alpha t}D( \rho \| \widehat{E}_{\N*}(\rho) ).
\end{aligned}
\]
This proves \eqref{eq:CMLSI-crossed}.
Since the auxiliary algebra $\mathcal N$ is arbitrary, $(\widehat T_t)$ satisfies CMLSI with the constant $\alpha$. 
\end{proof}

\bigskip

\section{Schatten-valued Poincar\'e inequalities on expanders}\label{sec:schatten-graphs}

This section is devoted to a concrete application of noncommutative $L_p$-Poincar\'e inequalities on the minimum distortion needed to embed any $n$-point metric space into Schatten class $S_p$. This solves an open question posed in \cite[Question 2]{EscuderoGutierrez2024}.  The key idea of the proof in this section is standard; for instance, see \cite{Matousek2002,Ostrovskii2013} and references therein. However, the new ingredient of our proof is just noncommutative $L_p$-Poincar\'e inequalities, i.e. Theorem \ref{Main-p-PI}.
% The problem is to compare average oscillation over all vertex pairs with average oscillation over edges for maps from a finite graph into a Schatten class, with a constant independent of the matrix dimension.  This comparison also produces metric-distortion lower bounds for expander families.

\subsection{Poincar\'{e} type estimate on graphs}
We first recall some basic notions on graphs. In this section, all graphs are assumed to be simple and undirected. Let $G=(V,E_G)$ be a finite connected $d$-regular simple undirected graph, where $d\geq1$. Its normalized adjacency matrix $Q=(Q_{xy})_{x, y\in V}$ is
\[
Q_{xy}=
\begin{cases}
	\dfrac{1}{d}, & x\sim y,\\[4pt]
	0, & \text{otherwise},
\end{cases}
\]
and the standard lazy walk is
\begin{equation*}
 P=\frac{I+Q}{2}.
 \label{eq:lazy-graph-walk}
\end{equation*}
Then the spectrum $\sigma(P)\subseteq [0,1]$. The Laplacian is defined by
\[
L = I - P.
\]
Let $V$ be equipped with the normalized counting measure, and $\mathsf E$ denote the corresponding expectation, namely, for $f : V \to \com$,
\[
\mathsf{E}(f)=|V|^{-1}\sum_{x\in V}f(x).
\]
Then $L$ generates a symmetric Markov semigroup $T_t = e^{-tL}$ on $L_\infty(V)$, with the fixed-point expectation $\mathsf{E}$. Observe that $P\mathsf{E} = \mathsf{E}$. Since $G$ is connected, the second largest spectrum of $P$ is
\begin{equation*}
 \rho(G)=\|P-\mathsf E\|_{L_2(V)\to L_2(V)} < 1.
\end{equation*}
Hence the spectral gap of $L$ is
$\delta(G) = 1 - \rho(G) > 0$.

For $1\leq p<\infty$, let $S_p$ be the Schatten class on a separable Hilbert space with norm $\|a\|_{S_p}=\operatorname{Tr}(|a|^p)^{1/p}$ for $a\in S_p$. For $f:V\to S_p$, define
\begin{align*}
 L_{V,p}(f)
 &=\left(\frac1{|V|^2}\sum_{x,y\in V}
        \|f(x)-f(y)\|_{S_p}^p\right)^{1/p},\\
 L_{E,p}(f)
 &=\left(\frac1{|E_G|}\sum_{\{x,y\}\in E_G}
        \|f(x)-f(y)\|_{S_p}^p\right)^{1/p}.
\end{align*}
Define
\begin{equation}
 \widetilde h_{S_p}(G)
 =\inf_{L_{V,p}(f)>0}\frac{L_{E,p}(f)}{L_{V,p}(f)}.
 \label{eq:Schatten-nonlinear-Poincare}
\end{equation}
The goal in this subsection is to estimate $\widetilde h_{S_p}(G)$ from noncommutative $L_p$-Poincar\'e inequalities; see Theorem \ref{thm:A-expander-schatten-poincare}. We proceed with the following elementary but quite helpful lemma.

\iffalse
Vertex, directed-edge, and undirected-edge averages use probability
normalization.  In particular, for symmetric quantities $b_{xy}=b_{yx}$,
\begin{equation}
 \frac1{d|V|}\sum_x\sum_{y\sim x}b_{xy}
 =\frac1{|E_G|}\sum_{\{x,y\}\in E_G}b_{xy}.
 \label{eq:directed-undirected-normalization}
\end{equation}
\fi
\begin{lemma}\label{lem:centered-pairwise-comparison}
Let $1\leq p<\infty$ and $f:V\to S_p$. Then
\begin{equation}
 \| f - \mathsf{E}(f) \|_{L_p(V; S_p)}
 \leq L_{V,p}(f)
 \leq2\| f - \mathsf{E}(f) \|_{L_p(V; S_p)}.
 \label{eq:centered-pairwise-comparison}
\end{equation}
\end{lemma}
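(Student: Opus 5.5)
Both inequalities in \eqref{eq:centered-pairwise-comparison} follow from Jensen's inequality and the triangle inequality in the Bochner space $L_p(V\times V; S_p)$, where $V\times V$ carries the product of the normalized counting measures. Write $m=\mathsf E(f)=|V|^{-1}\sum_{y\in V} f(y)\in S_p$. Note that
\[
L_{V,p}(f)=\Bigl\|\,(x,y)\mapsto f(x)-f(y)\,\Bigr\|_{L_p(V\times V;S_p)} .
\]

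\emph{Lower bound.} For fixed $x$, we have $f(x)-m=|V|^{-1}\sum_{y}(f(x)-f(y))$. The function $a\mapsto\|a\|_{S_p}^p$ is convex for $p\ge1$, since it is the $p$-th power of a norm. Jensen's inequality over the uniform average in $y$ therefore gives
\[
\|f(x)-m\|_{S_p}^p\le \frac1{|V|}\sum_{y}\|f(x)-f(y)\|_{S_p}^p .
\]
Averaging over $x$ and taking $p$-th roots yields $\|f-\mathsf E(f)\|_{L_p(V;S_p)}\le L_{V,p}(f)$.

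\emph{Upper bound.} Split $f(x)-f(y)=(f(x)-m)-(f(y)-m)$. Minkowski's inequality in $L_p(V\times V;S_p)$ bounds $L_{V,p}(f)$ by the sum of the norms of $(x,y)\mapsto f(x)-m$ and $(x,y)\mapsto f(y)-m$. Each of these norms equals $\|f-\mathsf E(f)\|_{L_p(V;S_p)}$, because the measure on $V$ is a probability measure and the free variable simply integrates out. This gives the factor $2$.

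There is no real obstacle; the only point requiring care is the normalization. Both $L_{V,p}$ and the Bochner norm use probability normalization ($|V|^{-2}$ and $|V|^{-1}$ respectively), and this is exactly what makes the constants $1$ and $2$ come out with no dependence on $|V|$. The argument uses only that $S_p$ is a normed space, so no noncommutative structure is needed here.
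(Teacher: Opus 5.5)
Your proof is correct and follows essentially the same route as the paper: the lower bound is the identical Jensen/convexity argument applied to $f(x)-\mathsf E(f)=|V|^{-1}\sum_y (f(x)-f(y))$. For the upper bound, the paper instead uses the pointwise estimate $\|f(x)-f(y)\|_{S_p}^p\le 2^{p-1}\bigl(\|f(x)-\mathsf E(f)\|_{S_p}^p+\|f(y)-\mathsf E(f)\|_{S_p}^p\bigr)$ and then averages. Your Minkowski step in $L_p(V\times V;S_p)$ is just the normed form of that estimate and yields the same constant $2$.
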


\begin{proof}
For any fixed $x\in V$,
\[
 f(x) - \mathsf{E}(f)=|V|^{-1}\sum_{y\in V}(f(x)-f(y)).
\]
Then by the triangle inequality and the Jensen inequality, one has
\[
 \|f(x) - \mathsf{E}(f)\|_{S_p}^p
 \leq |V|^{-1}\sum_{y \in V}\|f(x)-f(y)\|_{S_p}^p.
\]
By taking expectaion on both sides in $x$, we obtain the first inequality. For the second, the triangle inequality yields
\[
 \|f(x)-f(y)\|_{S_p}^p
 \leq2^{p-1}\bigl(\| f(x) - \mathsf{E}(f) \|_{S_p}^p+
   \| f(y) - \mathsf{E}(f) \|_{S_p}^p\bigr).
\]
This completes the proof after taking expectation on both sides in two variables $x, y$. 
\end{proof}

\begin{theorem}\label{thm:A-expander-schatten-poincare}
Let $G$ be a finite connected $d$-regular graph. Then for $p \geq 2$,
\begin{equation}
 \widetilde h_{S_p}(G)
 \geq
 \frac{\sqrt{\delta(G)}}{ 2\sqrt{2}p },
 \label{eq:strong-expander-Schatten}
\end{equation}
and for $1 \leq p < 2$,
\begin{equation}
\label{eq:schatten-low-p-expansion}
    \widetilde h_{S_p}(G)
    \ge
    \frac{\delta(G)^{1/p}}{2^{1 + 1/p}}.
\end{equation}
\iffalse
At the Hilbert endpoint the stronger identity
\begin{equation}
 \widetilde h_{S_2}(G)=\sqrt{2\delta(G)}.
 \label{eq:Hilbert-expander-constant}
\end{equation}
\fi
Consequently, for every $1 \leq p < \infty$,
\begin{equation}
 \widetilde h_{S_p}(G) \geq  \frac{\delta(G)}{4p}.
 \label{eq:A-plus-and-A-prime}
\end{equation}
The constants are independent of both the matrix dimension and $|V|$.
\end{theorem}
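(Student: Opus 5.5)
The plan is to realize $S_p$-valued functions on $V$ as elements of a tracial tensor product and apply \eqref{main-1} of Theorem~\ref{Main-p-PI} to the lazy-walk semigroup. First I reduce to finite dimensions. Let $P_N$ be finite-rank projections increasing to $\1$. Then $P_N f(x) P_N\to f(x)$ in $S_p$ for each of the finitely many $x\in V$, so both $L_{V,p}$ and $L_{E,p}$ converge. Hence it suffices to treat $f:V\to M_N$.

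Next I make $f$ self-adjoint via the dilation $F(x)=\begin{pmatrix}0&f(x)\\ f(x)^*&0\end{pmatrix}\in M_{2N}$. Since $\|F(x)-F(y)\|_{S_p}^p=2\|f(x)-f(y)\|_{S_p}^p$, both $L_{V,p}$ and $L_{E,p}$ scale by the same factor $2^{1/p}$, and $F$ is centered exactly when $f$ is. Take $\M=L_\infty(V)\otimes M_{2N}$ with the trace $\mathsf E\otimes\tr$ and the semigroup $e^{-tL}\otimes\id$. This is a symmetric quantum Markov semigroup with fixed-point expectation $\mathsf E\otimes\id$ and spectral gap $\delta(G)$, because $L\otimes\id$ has the same $L_2$ spectrum. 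Normalizing $\tr$ or using $\operatorname{Tr}$ does not matter, since both sides of \eqref{main-1} are homogeneous in the trace. Every element of this finite-dimensional algebra lies in $\Aalg$.

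The key computation is the carré du champ of $L=I-P$:
\[
\Gamma(F,F)(x)=\tfrac12\sum_{y}P_{xy}\,|F(x)-F(y)|^2,
\]
which follows from a direct expansion using $\sum_y P_{xy}=1$. Since $P(x,\cdot)$ is a probability measure and $t\mapsto t^{p/2}$ is convex for $p\ge2$, the trace Jensen inequality gives
\[
\tr\bigl(\Gamma(F,F)(x)^{p/2}\bigr)\le 2^{-p/2}\sum_y P_{xy}\,\|F(x)-F(y)\|_{S_p}^p .
\]
The diagonal terms vanish and $P_{xy}=1/(2d)$ for $y\sim x$. Using $d|V|=2|E_G|$, averaging over $x$ gives $\|\Gamma(F,F)^{1/2}\|_p^p\le 2^{-p/2}\cdot\tfrac12\,L_{E,p}(F)^p$.

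Combining \eqref{main-1} with Lemma~\ref{lem:centered-pairwise-comparison} then yields
\[
L_{V,p}(F)\le 2\|F-\mathsf E F\|_p\le \frac{2\sqrt2\,p}{\sqrt{\delta(G)}}\,2^{-1/2-1/p}\,L_{E,p}(F).
\]
This implies \eqref{eq:strong-expander-Schatten} with room to spare.

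For $1\le p<2$ the $L_p$-Poincaré machinery is not available. Here I would instead use complex interpolation of $L_p(V;S_p)$ between $L_1(V;S_1)$ and the Hilbert space $L_2(V;S_2)$. On mean-zero functions, $P^k-\mathsf E$ has norm $\rho(G)^k$ on $L_2(V;S_2)$ and norm at most $2$ on $L_1(V;S_1)$. By interpolation its norm on $L_p(V;S_p)$ is at most $2^{2/p-1}\rho^{k(2-2/p)}$. Telescoping $f-P^kf=\sum_{j<k}P^j(I-P)f$ bounds $\|f-\mathsf Ef\|_p$ by a multiple of $\|(I-P)f\|_p$. Pointwise, $\|(I-P)f(x)\|\le \tfrac1{2d}\sum_{y\sim x}\|f(x)-f(y)\|$, which gives $\|(I-P)f\|_p\le 2^{-1}L_{E,p}(f)$. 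Optimizing $k$ against $\delta(G)$ should give a bound of the form $\delta^{1/p}$.

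This low-$p$ step is the main obstacle. The exact constant $2^{-1-1/p}\delta^{1/p}$ may instead come from a direct $p$-th power Cheeger/Matoušek-type comparison of $L_{V,p}^p$ and $L_{E,p}^p$ through the $L_2$ case. I would first try to interpolate the quadratic-form inequality at the level of $p$-th powers. Finally, \eqref{eq:A-plus-and-A-prime} follows from $\delta(G)\le1$. For $p\ge2$, $\sqrt\delta/(2\sqrt2p)\ge\delta/(4p)$. For $p<2$, $\delta^{1/p}/2^{1+1/p}\ge\delta/4\ge\delta/(4p)$.
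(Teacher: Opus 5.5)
Your argument for $p\ge 2$ is correct and follows the paper's proof. You realize matrix-valued $f$ in $L_\infty(V)\otimes M_n$, use $\Gamma(f,f)(x)=\frac1{4d}\sum_{y\sim x}|f(x)-f(y)|^2$, apply trace convexity, then \eqref{main-1} and Lemma~\ref{lem:centered-pairwise-comparison}. There are two minor differences. You use the $2\times2$ dilation where the paper invokes Theorem~\ref{thm:non-selfadjoint}. Your Jensen step over the lazy kernel $P(x,\cdot)$ wastes mass $\frac12$ on the vanishing diagonal term, so it is weaker by a factor $2^{1/2-1/p}$ than the paper's Jensen over the uniform neighbour measure. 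Your final constant $2^{1-1/p}p/\sqrt{\delta(G)}$ still yields \eqref{eq:strong-expander-Schatten}.

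The genuine gap is the range $1\le p<2$. The interpolation scheme cannot give \eqref{eq:schatten-low-p-expansion}, and the claim that optimizing $k$ produces $\delta^{1/p}$ is false. With $\theta=2-2/p$, forcing $2^{2/p-1}\rho^{k(2-2/p)}\le\frac12$ requires $k\ge \log 2/\bigl((p-1)\log(1/\rho)\bigr)$. The telescoping estimate $\|f-P^kf\|_p\le k\|(I-P)f\|_p$ is linear in $k$, so the best you obtain is a lower bound for $\widetilde h_{S_p}(G)$ of order $(p-1)\delta(G)$. This has the wrong power of $\delta(G)$; even at $p=2$ this route gives $\delta$ instead of $\sqrt{\delta}$. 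It also degenerates as $p\downarrow1$. At $p=1$ one has $\theta=0$, the interpolated bound is just $2$, and nothing follows. So neither \eqref{eq:schatten-low-p-expansion} nor \eqref{eq:A-plus-and-A-prime} for $p$ near $1$ is proved. Using a mixing estimate at the $L_1$ endpoint does not rescue this. $\|P^k-\mathsf E\|_{L_1\to L_1}$ becomes small only after a number of steps of order $\log|V|$, which destroys the independence of $|V|$.

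The missing idea is to reduce to $p=2$ through the noncommutative Mazur map, rather than interpolating linear operators.
\begin{itemize}
\item For self-adjoint $f$, put $g=f-\mathsf E(f)$ and $h=\Phi_{p/2}(g)=\sgn(g)|g|^{p/2}$ pointwise, so that $\|h(x)\|_{S_2}^2=\|g(x)\|_{S_p}^p$.
\item Ricard's dimension-free H\"older bound $\|\Phi_{p/2}(a)-\Phi_{p/2}(b)\|_{S_2}\le 2\|a-b\|_{S_p}^{p/2}$, valid for $1\le p\le2$, gives $\|h(x)-h(y)\|_{S_2}^2\le 4\|f(x)-f(y)\|_{S_p}^p$.
\item The $L_2$ spectral gap applied to $h$ bounds $\delta(G)\|h-\mathsf E(h)\|_{L_2(V;S_2)}^2$ by $\frac1{4|E_G|}\sum_{\{x,y\}\in E_G}\|h(x)-h(y)\|_{S_2}^2$.
\item Since $h$ is not centered, you also need $\|\mathsf E(h)\|_{S_2}^2\le\frac12\|h\|_{L_2(V;S_2)}^2$. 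This follows from $\mathsf E(h_+^{2/p})=\mathsf E(h_-^{2/p})$ (that is, $\mathsf E(g)=0$), operator concavity of $t\mapsto t^{p/2}$, and convexity of $X\mapsto\operatorname{Tr}(X^p)$.
\end{itemize}
Together these give $\|f-\mathsf E(f)\|_{L_p(V;S_p)}^p\le\frac{2}{\delta(G)}L_{E,p}(f)^p$. Then \eqref{eq:schatten-low-p-expansion} follows from Lemma~\ref{lem:centered-pairwise-comparison} and the dilation. Once both bounds are in hand, your closing deduction of \eqref{eq:A-plus-and-A-prime} is fine.
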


\begin{proof}
We first consider finite-dimensional $S_p$-valued functions. Denote by
\[
 \M_G = L_\infty(V)\,\overline\otimes\,M_n(\com) \simeq L_\infty(V; M_n(\com)).
\]
Define the tracial state on $\M_G$ by
\[
 \tau_G(f)=\frac1{n|V|}\sum_{x\in V}\operatorname{Tr}(f(x)), \quad \forall f\in \M_G.
\]
Then the Laplacian, quantum Markov semigroup and corresponding fixed-point expectation on $\M_G$ are
\[ L \otimes {\rm id}_{M_n(\com)}, \quad T_t \otimes {\rm id}_{M_n(\com)}, \quad \mathsf E\otimes {\rm id}_{M_n(\com)}. \]
We still denote them by $L$, $T_t$ and $\mathsf{E}$ without causing any confusion. Thus $(T_t)$ has the spectral gap $\delta(G)$ on $L_2(\M_G,\tau_G)$. 
Let $\Gamma_G$ be the corresponding carr\'e du champ. Direct computation implies
\begin{equation}
 \Gamma_G(f,f)(x) = \frac1{4d}\sum_{y\sim x} |f(x) - f(y)|^2.
 \label{eq:graph-row-column-gamma}
\end{equation}

For $2 \leq p < \infty$, the map $X\mapsto\operatorname{Tr}(X^{p/2})$ is convex on
positive matrices. Therefore, we have
\begin{align*}
 \|\Gamma_G(f,f)^{1/2}\|_{L_p(\M_G)}^p & = \frac{1}{2^p n|V|} \sum_{x \in V} {\rm Tr}\sk{ \jdz{ \frac{1}{d}\sum_{y \sim x} |f(x) - f(y)|^2 }^{p/2} } \\
 & \leq  \frac{1}{2^p n|V|} \sum_{x \in V} \frac{1}{d}\sum_{y \sim x} {\rm Tr} (|f(x) - f(y)|^p)\\
 & = \frac1{2^pn|E_G|}
   \sum_{\{x,y\}\in E_G}\| f(x) - f(y) \|_{S_p}^p.
 \label{eq:column-by-edge}
\end{align*}
Hence, one has
\begin{equation}
 \|\Gamma_G(f, f)^{1/2}\|_{L_p(\M_G)}
 \leq\frac1{2n^{1/p}}L_{E,p}(f).
 \label{eq:rc-by-edge}
\end{equation}
Applying \eqref{main-1} in Theorem \ref{Main-p-PI} and Theorem \ref{thm:non-selfadjoint}, combining it with \eqref{eq:rc-by-edge}, we obtain
\begin{equation*}
\| f - \mathsf{E}(f) \|_{L_p(V; S_p)} = n^{1/p} \| f - \mathsf{E}(f) \|_{L_p(\M_G)}
 \leq 
 \frac{\sqrt{2}p}{\sqrt{\delta(G)}} L_{E,p}(f).
 \label{eq:centered-graph-Poincare}
\end{equation*}
Combining it with the second inequality in Lemma~\ref{lem:centered-pairwise-comparison} yields
\[
 L_{V,p}(f)
 \leq
 \frac{2\sqrt{2}p}{\sqrt{\delta(G)}}L_{E,p}(f).
\]
This proves \eqref{eq:strong-expander-Schatten} for $M_n(\com)$-valued functions.

For a general $S_p$-valued function, let $(e_n)$ be increasing finite-rank
projections converging strongly to the identity and let
$f_n(x)=e_nf(x)e_n$.  Since $V$ is finite,
$f_n(x)\to f(x)$ in $S_p$ simultaneously for all vertices.  Hence both
$L_{V,p}(f_n)$ and $L_{E,p}(f_n)$ converge to $L_{V, p}(f)$ and $L_{E, p}(f)$ respectively.
Passing to the limit yields \eqref{eq:strong-expander-Schatten} in $S_p$.

\iffalse
It remains to identify the endpoint.  For $S_2$-valued $f$ and
$g=f-\mathsf E_Vf$, expansion of the two quadratic forms gives
\begin{equation}
 L_{E,2}(f)^2=4\langle g,(I-P)g\rangle_{L_2(V;S_2)},
 \qquad
 L_{V,2}(f)^2=2\|g\|_{L_2(V;S_2)}^2.
 \label{eq:Hilbert-graph-identities}
\end{equation}
The Rayleigh quotient of $I-P$ on the centered subspace has infimum
$\delta(G)$.  Thus the quotient of the two expressions in
\eqref{eq:Hilbert-graph-identities} has infimum $2\delta(G)$.  A scalar
eigenfunction for the eigenvalue $\rho(G)$, multiplied by a rank-one element
of $S_2$, attains this infimum.  This proves
\eqref{eq:Hilbert-expander-constant}, which also implies
\eqref{eq:strong-expander-Schatten} when $p=2$.

For $p>2$, \eqref{eq:unified-Beckner-constant} gives
$C_p^{\mathrm B}\leq4p$.  Combining this with
$\sqrt{\delta(G)}\geq\delta(G)$ proves
\eqref{eq:A-plus-and-A-prime}; the case $p=2$ follows from
\eqref{eq:Hilbert-expander-constant}.
\fi

For $1 < p < 2$, for a self-adjoint matrix $x$ and $\theta>0$, write $ \Phi_\theta(x)=\operatorname{sgn}(x)|x|^\theta $ as the noncommutative Mazur map. It follows from
\cite[Lemma~2.6]{Ricard2015} that the following dimension-free estimate holds
\begin{equation}
\label{eq:mazur-low-p-holder}
    \|\Phi_{p/2}(x)-\Phi_{p/2}(y)\|_2
    \le
    2\|x-y\|_p^{p/2},
    \qquad 1\le p\le2,
\end{equation}
where $x, y$ are self-adjoint. Note that the constant $2$ is implicit in the original proof.

For any self-adjoint $f:V \to S_p$, let
\[
    g = f - \mathsf{E}(f),
    \quad
    h(x)=\Phi_{p/2}(g(x)), \ \forall x\in V.
\]
Thus
\begin{equation*}
    \mathsf{E}(g) = 0,
    \quad
    g(x) =\Phi_{2/p}(h(x)),
    \quad
    \|h(x)\|_{S_2}^2=\|g(x)\|_{S_p}^p.
\end{equation*}
By \eqref{eq:spectral-gap} and \eqref{eq:graph-row-column-gamma}, we have
\begin{equation}\label{eq:mazur-spectral-gap}
\delta(G) \| h - \mathsf{E}(h) \|_{L_2(V; S_2)}^2 \leq \frac{1}{4|E_G|} \sum_{ \lk{x, y}\in E_G } \| h(x) - h(y) \|_{S_2}^2.
\end{equation}  
On the other hand, \eqref{eq:mazur-low-p-holder} yields
\begin{equation}\label{eq:Mazur-p}
\|h(x) - h(y)\|_{S_2}^2
    \le
    4\|g(x)-g(y)\|_{S_p}^p
    =
    4\|f(x)-f(y)\|_{S_p}^p.
\end{equation}

Note that $\mathsf E ( \Phi_{2/p}(h) )=0$, we show
\begin{equation}
\label{eq:scalar-mean-defect}
    \|\mathsf E (h)\|_{S_2}^2\le\frac12 \| h \|_{L_2(V; S_2)}^2.
\end{equation}
Indeed, write $h=h_+-h_-$ and
\[
    U = \mathsf E (h_+^{2/p})=\mathsf E (h_-^{2/p}).
\]
Since $t \mapsto t^{p/2}$ is operator concave, by the Jensen inequality,
\[
    \mathsf{E}(h_{\pm}) = \mathsf{E} [ (h_{\pm}^{2/p})^{p/2} ] \leq [\mathsf{E}(h_{\pm}^{2/p})]^{p/2} = U^{p/2}.
\]
Then $-U^{p/2} \leq \mathsf{E}(h) \leq U^{p/2}$, and therefore
\[
\| \mathsf{E}(h) \|_{S_2}^2 \leq \| U^{p/2} \|_{S_2}^2. 
\]
Since $X \mapsto {\rm Tr}(X^{p})$ is convex, by the Jensen inequality,
\begin{align*}
\| U^{p/2} \|_{S_2}^2 & = \frac{1}{2} {\rm Tr} ( [\mathsf{E}(h_{+}^{2/p})]^p + [\mathsf{E}(h_{-}^{2/p})]^p ) \\
& \leq \frac{1}{2} \sk{ \mathsf{E} [{\rm Tr}( h_+^2 )] + \mathsf{E}[{\rm Tr} (h_-^2) ] } = \frac{1}{2}\| h \|_{L_2 (V; S_2)}^2.
\end{align*}
This proves \eqref{eq:scalar-mean-defect}. Hence
\[
\| h - \mathsf{E}(h) \|_{L_2(V; S_2)}^2 = \| h \|_{L_2(V; S_2)}^2 - \| \mathsf{E}(h) \|_{S_2}^2 \geq \frac{1}{2}\| h \|_{L_2(V; S_2)}^2
\]
Combining it with \eqref{eq:mazur-spectral-gap} and \eqref{eq:Mazur-p}, we obtain
\begin{align*}
\label{eq:low-p-centered-estimate}
    \| f - \mathsf{E}(f) \|_{L_p(V; S_p)}^p & = \frac{1}{|V|} \sum_{x \in V} \| g(x) \|_{S_p}^p =  \frac{1}{|V|} \sum_{x \in V} \| h(x) \|_{S_2}^2 \\ 
    & \le 2 \| h - \mathsf{E}(h) \|_{L_2(V; S_2)}^2 \leq
    \frac{2}{\delta(G)}L_{E,p}(f)^p.
\end{align*}
Finally, the second inequality in Lemma~\ref{lem:centered-pairwise-comparison} implies
\[
    L_{V,p}(f)
    \le 2 \| f - \mathsf{E}(f) \|_{L_p(V; S_p)}\leq \frac{2^{1 + 1/p}}{\delta(G)^{1/p}} L_{E, p}(f).
\]
As for an arbitrary matrix-valued $f$, apply the self-adjoint case to
\[
    \widehat f(x)=
    \begin{pmatrix}
        0&f(x)\\
        f(x)^*&0
    \end{pmatrix}.
\]
Note that
\[
    \|\widehat f(x)-\widehat f(y)\|_p^p
    =
    2\|f(x)-f(y)\|_p^p.
\]
\iffalse
both $L_{V,p}$ and $L_{E,p}$ are multiplied by $2^{1/p}$.
Thus the same constant holds for arbitrary matrices.
For maps into the infinite-dimensional Schatten class,
compress all values of $f$ by a common sequence of
finite-rank projections converging strongly to the identity.
These compressions converge in $S_p$ at every vertex.
Since $V$ is finite, passing to the limit proves
\eqref{eq:schatten-low-p-poincare} in full generality.
\fi
Thus the same inequality holds for arbitrary matrices. As a result, this proves \eqref{eq:schatten-low-p-expansion} for general $S_p$-valued functions.
\end{proof}

\subsection{Distortion of expanders}
For a finite metric space $(X,d_X)$, recall that the distortion $c_{S_p}(X)$ is the infimum of all
$D \geq1 $ for which there are $1$-Lipschitz maps $f:X\to S_p$ satisfying
\[
 \frac{1}{D} d_X(x,y)\leq\|f(x)-f(y)\|_{S_p}
 \leq d_X(x,y),
 \quad x,y\in X.
\]

The upper bound of the minimum distortion needed to embed any $n$-point metric space into Schatten class $S_p$ follows from the finite metric embedding estimate into $\ell_p$ shown in \cite{Matousek1997} and the isometric diagonal embedding $\ell_p \hookrightarrow S_p$. It suffices to show that this upper bound is optimal. We will see that Corollary \ref{cor:expander-distortion} provides such examples.

To construct such examples, we need to use expanders. We proceed with the introduction of graphs. A finite connected graph $G =(V, E_G)$ can be viewed as a metric space, equipped with the shortest-path metric, namely
\[
d_G(x, y) = \inf\lk{ n : x \sim x_1 \sim \cdots \sim x_{n - 1} \sim x_n = y }.
\]
Then we obtain the Schatten distortion of expanders by virtue of Theorem \ref{thm:A-expander-schatten-poincare}.
\begin{corollary}\label{cor:expander-distortion}
Let $(G_N = (V_N, E_{G_N}))$ be a family of regular graphs with $|V_N|=N$, degrees at most
$d_0$, and $\inf_N\delta(G_N)=\delta_0>0$, equipped with the shortest-path metric. For every $1 \leq p < \infty$,
\begin{equation}
 c_{S_p}(G_N) \approx \max\lk{1, \frac{\log N}{p} }.
 \label{eq:expander-Schatten-distortion}
\end{equation}

\end{corollary}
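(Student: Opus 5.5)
The upper bound comes from known embedding results, and the lower bound comes from Theorem~\ref{thm:A-expander-schatten-poincare} combined with the fact that typical vertex pairs in a bounded-degree graph are far apart.

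\textbf{Upper bound.} Matou\v{s}ek's theorem \cite{Matousek1997} embeds every $N$-point metric space into $\ell_p$ with distortion $O(\max\{1,\log N/p\})$. Composing with the isometric diagonal embedding $\ell_p\hookrightarrow S_p$ gives
\[
c_{S_p}(G_N)\lesssim \max\{1,\log N/p\}.
\]
The bound $c_{S_p}\ge 1$ is trivial, so it only remains to show $c_{S_p}(G_N)\gtrsim \log N/p$ once $\log N/p$ is large.

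\textbf{Lower bound.} Fix $f:V_N\to S_p$ with
\[
\frac1D\, d_{G_N}(x,y)\le \|f(x)-f(y)\|_{S_p}\le d_{G_N}(x,y).
\]
On edges the upper Lipschitz bound gives $\|f(x)-f(y)\|_{S_p}\le 1$, so $L_{E,p}(f)\le 1$.

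For the vertex average we count pairs. A ball of radius $r$ in a graph of degree at most $d_0$ has at most $d_0^{r+1}$ vertices. Choose $r=\lfloor \log_{d_0}(N/2)\rfloor-1$. Then for each $x$, at least $N/2$ vertices $y$ satisfy $d_{G_N}(x,y)\ge r\gtrsim \log N/\log d_0$. Therefore
\[
L_{V,p}(f)^p\ge \tfrac12\,(r/D)^p, \qquad\text{so}\qquad L_{V,p}(f)\ge 2^{-1/p}\,\frac{r}{D}\gtrsim \frac{\log N}{D}.
\]

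The ratio $L_{E,p}(f)/L_{V,p}(f)$ is bounded below by $\widetilde h_{S_p}(G_N)$. For the graph parameter we use \eqref{eq:A-plus-and-A-prime}, which gives $\widetilde h_{S_p}(G_N)\ge \delta_0/(4p)$ for every $1\le p<\infty$; for $p\ge 2$ one could instead use the stronger \eqref{eq:strong-expander-Schatten}. Combining the three bounds,
\[
\frac{\delta_0}{4p}\le \frac{L_{E,p}(f)}{L_{V,p}(f)}\lesssim \frac{D}{\log N},
\qquad\text{hence}\qquad D\gtrsim \frac{\delta_0\,\log N}{p}.
\]
Taking the infimum over $f$ yields the lower bound, with implicit constants depending only on $d_0$ and $\delta_0$.

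\textbf{Main issue.} The only delicate point is that the claimed order is $\log N/p$, linear in $1/p$. So the linear-in-$p$ bound \eqref{eq:A-plus-and-A-prime} is exactly what is needed. The $\sqrt{\delta}/p$ and $\delta^{1/p}$ forms also suffice, since $\delta_0$ is fixed. Regularity of $G_N$ is needed so that Theorem~\ref{thm:A-expander-schatten-poincare} applies, and this is part of the hypotheses. Finally, the regime $p\gtrsim\log N$ is absorbed by the term $1$ in the maximum.
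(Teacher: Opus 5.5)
Your proposal is correct and follows essentially the same route as the paper. The upper bound comes from Matou\v{s}ek's embedding composed with the diagonal embedding $\ell_p\hookrightarrow S_p$. For the lower bound, ball counting shows that every vertex has at least $N/2$ vertices at distance $\gtrsim \log N$; this is combined with $L_{E,p}(f)\le 1$ and $\widetilde h_{S_p}(G_N)\ge \delta_0/(4p)$. The only cosmetic difference is your cruder ball bound $d_0^{r+1}$ in place of the paper's Moore-type count. It is valid once $d_0\ge 2$, which you may assume without loss of generality by enlarging $d_0$.
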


\begin{proof}
It suffices to show the lower bound as mentioned before. The case $d_0\leq2$ contains no infinite spectral expander family and needs no separate argument. For $d_0\geq3$, every ball of radius $r$ in a graph of maximum degree $d_0$
has at most
\[
 1+d_0\sum_{j=0}^{r-1}(d_0-1)^j = 1 + \frac{d_0}{d_0 - 2} [(d_0 - 1)^r-1]
\]
vertices. For a sufficiently large $N$, choose $ r_N=\lfloor c_{d_0}\log N\rfloor \geq c_{d_0}(\log N)/2 $
with $c_{d_0}>0$ small enough such that
\[
1 + \frac{d_0}{d_0 - 2} (d_0 - 1)^{r_N} \leq N/2. 
\]
Then, for each vertex $x$, there are at least $N/2$ vertices $y$ having distance $d_{G_N}(x, y) \geq r_N$, namely,
\[
|\lk{ y \in V_N: d_{G_N}(x, y) \geq r_N } | \geq N/2, \quad \forall\, x \in V_N.
\]
Consequently, for $p \geq 1$,
\begin{align}
 & \ \ \ \left(\frac1{|V_N|^2}\sum_{x,y\in V(G_N)}d_{G_N}(x,y)^p\right)^{1/p} \notag\\
 & \geq \left(\frac1{ |V_N|^2 }\sum_{ \substack{ x, y \in V_N \\ d_{G_N}(x, y) \geq r_N}  }d_{G_N}(x,y)^p\right)^{1/p} \notag\\
& \geq r_N N^{-2/p}|\lk{ (x, y) \in V_N \times V_N: d_{G_N}(x, y) \geq r_N } |^{1/p} \notag\\
& \geq  c_{d_0}'\log N.
 \label{eq:expander-average-distance}
\end{align}
Let $f:V_N \to S_p$ be an $1$-Lipschitz function with distortion $D$. Every edge has metric distance $1$, so 
\[ L_{E,p}(f) \leq \left(\frac1{|E_{G_N}|}\sum_{\{x,y\}\in E_{G_N}}
         |d_{G_N}(x, y)|^p \right)^{1/p} = 1. \]
On the other hand, by \eqref{eq:expander-average-distance},
\[
L_{V,p}(f) \geq \frac{1}{D}\left(\frac1{|V_N|^2}\sum_{x,y\in V(G_N)}d_{G_N}(x,y)^p\right)^{1/p} \geq \frac{c_{d_0}'}{D}\log N.
\]
Therefore Theorem \ref{thm:A-expander-schatten-poincare} yields, for $1 \leq p \leq \log N$,
\[
D \geq DL_{E,p}(f)
 \geq D\widetilde h_{S_p}(G_N)L_{V,p}(f)
 \geq c_{d_0}'
 \frac{ \delta_0 }{4p}\log N.
\]
Taking the infimum over all such $f$ yields the lower bound. Since the distortion is always at least $1$, the proof is complete.
\end{proof}

\begin{remark}
	The family of regular graphs with additional properties appearing in Corollary \ref{cor:expander-distortion} is called ``expander''. The notion of ``expander'' was discovered independently by Kolmogorov and Barzdin \cite{KB1967}, and Pinsker \cite{Pinsker1973}. However, the first explicit construction of expander was due to Margulis \cite{Margulis1973}.
	\end{remark}

\bigskip

\section{Examples}\label{sec:example}
This section is devoted to various applications to some specific quantum Markov semigroups and their best constants of $L_p$-Poincar\'{e} inequalities. We would like to mention that all specific examples of quantum Markov semigroups appearing in this section are ergodic. However, a simple way to construct non-ergodic quantum Markov semigroups is through amplification, namely, by tensoring with the identity map ${\rm id}_{M_n(\mathbb{C})}$. In this section, we focus on $L_p$-Poincar\'{e} inequalities only for ergodic quantum Markov semigroups. 

\subsection{Quantum tori}
Let $d \geq 2$ and $\theta = (\theta_{ij}) $ be a real skew symmetric $d \times d$ matrix. The $d$-dimensional noncommutative torus $\Aalg_\theta$ is the universal $C^*$-algebra generated by unitary operators $U_1, U_2, \cdots, U_d$ satisfying the commutation relations:
\[
	U_i U_j = e^{ 2\pi i \theta_{ij} } U_j U_i, \quad i, j = 1, 2, \cdots, d.
\]
Adopting Fourier series notation, for multi-indices $m = (m_1, m_2, \cdots, m_d) \in \ent^d$, a polynomial is defined as a finite sum
\[
	x = \sum_{ m \in \ent^d } \alpha_m U^m, \quad \alpha_m \in \com, \ U^m = U_1^{m_1} U_2^{m_2} \cdots U_d^{m_d},
\]
that is, $\alpha_m = 0$ for all but finite indices $m \in \ent^d$. The involution algebra $\mathcal P_\theta$ of all such polynomials is dense in $\Aalg_\theta$. The canonical tracial state is $\tau (x) = \alpha_{\bf 0}$, where ${\bf 0} = (0, 0 , \cdots, 0)$. Let $\mathbb T^d_\theta$ be the von Neumann algebra as the $w^\ast$-closure of $\Aalg_\theta$ in the GNS representation of $\tau$. In particular, when $\theta = 0$, $\Aalg_\theta=C(\mathbb{T}^d)$ and $\mathbb T^d_\theta=L_\infty(\mathbb{T}^d, \d m)$.

The heat semigroup $T_t$ on $\mathbb T_\theta^d$ is given by
\[
	T_t(U^m) = e^{-|m|^2t} U^m, \quad |m|^2:= \sum_{i = 1}^d |m_i|^2.
\]
The generator is the Laplacian $\Delta(U^m) = |m|^2U^m$. Direct computation leads to
\[
	\Gamma_T(U^m, U^n) = \sum_{j = 1}^dm_jn_j (U^{m})^*U^n, \ \Gamma_T^2(U^m, U^n) = \sum_{i,j = 1}^d m_im_jn_in_j (U^{m})^*U^n,
\]
for all $m , n \in \ent^d$.
Notably, $\Gamma_T^2 \geq 0$ but violates the $\Gamma^2$-condition. Nevertheless, $(T_t)$ is ultracontractive. Indeed, for any $x = \sum_{ m \in \ent^d } \alpha_m U^m \in L_2(\mathbb T^d_\theta)$
\begin{align*}
	\| T_t(x) \|_\infty \leq \sum_{m \in \ent^d} |\alpha_m|e^{-|m|^2 t} \leq \sk{ \sum_{m \in \ent^d} |\alpha_m|^2 }^{1/2}\sk{ \sum_{m \in \ent^d} e^{-2|m|^2t} }^{1/2} =: \phi(t)\| x \|_2. 
\end{align*}
The MLSI holds for $(T_t)$ with (not optimal) constant $\alpha_T = (2\log3)^{-1}$; see \cite[Corollary 6.4]{Brannan2023}. We also consider Poisson semigroups,	
\begin{align*}
	& P_t: \mathbb T_\theta^d \to \mathbb T^d_\theta, \quad P_t(U^m) = e^{-|m|t}U^m ; \\
	& Q_t: \mathbb T_\theta^d \to \mathbb T^d_\theta, \quad Q_t(U^m) = e^{-\|m\|_1t}U^m , \quad \| m \|_1 := \sum_{j=1}^{d}|m_j|,
\end{align*}
with associated carr\'{e} du champ operators denoted by $\Gamma_P$ and $\Gamma_Q$, respectively. According to \cite[Corollary 6.4]{Brannan2023}, the MLSI holds for both $(P_t)$ and $(Q_t)$ with one (not optimal) constant $\alpha_P = (2^{d+1}s_d (d-1)!)^{-1/d} $ (where $s_d = 2\pi^{d/2}/\Gamma(d/2)$ denotes the surface area of $(d-1) $-dimensional unit sphere) and the optimal constant $\alpha_Q = 2$, respectively. Therefore, Theorem \ref{Main-p-PI} immediately yields the following $L_p$-Poincar\'{e}-type inequalities.
\begin{corollary}
	Let $2 \leq p < \infty$. For any selfadjoint $x \in \mathcal P_\theta$, we have
	\[
		\| x - \tau(x) \|_p \leq 4 \sqrt{p\log 3} \| \Gamma_T(x, x)^{1/2} \|_{p},
	\]
	and
	\[
		\| x - \tau(x) \|_p \leq 4(2s_d (d-1)!)^{1/2d} \sqrt{p} \| \Gamma_P(x, x)^{1/2} \|_{p},
	\]
	\[
		\| x - \tau(x) \|_p \leq 2\sqrt{p} \| \Gamma_Q(x, x)^{1/2} \|_{p}.
	\]
\end{corollary}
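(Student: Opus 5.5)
This corollary should follow from \eqref{main-2} in Theorem~\ref{Main-p-PI}, once the three MLSI constants quoted from \cite[Corollary~6.4]{Brannan2023} are substituted. First I would check the hypotheses. Each of the three semigroups $(T_t)$, $(P_t)$, $(Q_t)$ is a trace-symmetric quantum Markov semigroup on the finite von Neumann algebra $\mathbb T^d_\theta$. It is diagonal in the Fourier basis $\{U^m\}$, with real eigenvalues $|m|^2$, $|m|$ and $\|m\|_1$ respectively. Every eigenvalue vanishes only at $m=\mathbf 0$, so the fixed-point algebra is $\com\1$ and $E(x)=\tau(x)\1$. For a selfadjoint polynomial $x\in\mathcal P_\theta$, both $x$ and $x^2$ are again polynomials, which are finite sums of eigenvectors. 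Hence $x,x^2\in{\rm dom}(A)$ for each generator, so $x\in\Aalg_{\rm sa}$ and $\Gamma(x,x)$ is a well-defined positive polynomial. The case $p=2$ of \eqref{main-2} is included by continuity in $p$, or directly by the MLSI-implies-PI route, so it needs no separate treatment.

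Next I would insert the constants into $2\sqrt{2p}/\sqrt{\alpha}$. For the heat semigroup, $\alpha_T=(2\log 3)^{-1}$, so
\[
\frac{2\sqrt{2p}}{\sqrt{\alpha_T}}=2\sqrt{2p}\,\sqrt{2\log 3}=4\sqrt{p\log 3}.
\]
For $(P_t)$, $\alpha_P^{-1/2}=(2^{d+1}s_d(d-1)!)^{1/(2d)}$. This gives the coefficient $2\sqrt{2}\,2^{(d+1)/(2d)}(s_d(d-1)!)^{1/(2d)}\sqrt p$, and I would then simplify it to the stated form $4(2s_d(d-1)!)^{1/2d}\sqrt p$. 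The exponent bookkeeping is $2\sqrt2\cdot 2^{(d+1)/(2d)}=4\cdot 2^{1/(2d)}$, which matches after absorbing $2^{1/(2d)}$ into $(2s_d(d-1)!)^{1/(2d)}$. For $(Q_t)$, $\alpha_Q=2$, so the coefficient is $2\sqrt{2p}/\sqrt2=2\sqrt p$.

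No step is a genuine obstacle. The only point needing care is confirming that the MLSI formulation in \cite{Brannan2023} agrees with \eqref{eq:mlsi} as normalized here, namely $\alpha\Ent_E(x)\le\Eform(x,\log x)$ with ergodic $E=\tau$. If their convention carries an extra factor of $2$ or uses the relative entropy of densities, I would rescale $\alpha$ accordingly, using that both sides of \eqref{eq:mlsi} are homogeneous under $x\mapsto cx$. After that, the three displayed inequalities are immediate instances of \eqref{main-2}.
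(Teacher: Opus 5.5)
Your proposal is correct and follows the paper's own route: the corollary comes from substituting $\alpha_T=(2\log 3)^{-1}$, $\alpha_P=(2^{d+1}s_d(d-1)!)^{-1/d}$ and $\alpha_Q=2$ into the coefficient $2\sqrt{2p}/\sqrt{\alpha}$ of \eqref{main-2}, and your arithmetic (including $2\sqrt2\cdot 2^{(d+1)/(2d)}=4\cdot 2^{1/(2d)}$) is right. Your additional checks are sound: ergodicity with $E(x)=\tau(x)\1$, polynomials lying in $\Aalg_{\rm sa}$, and the endpoint $p=2$ by continuity or via MLSI $\Rightarrow$ PI (the paper's proof of Theorem~\ref{Main-p-PI} only treats $p>2$ explicitly).
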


We would like to stress that $(T_t)$ does not satisfy the $\Gamma^2$-condition. Hence, our Theorem \ref{Main-p-PI} complements Junge and Zeng's results \cite{Junge2015,Junge2015_2}, while the $\Gamma^2$-condition is essential in their proofs.

\subsection{Random matrices}
Let $\Omega$ be a Polish space equipped with a Borel probability measure
$\mu$, and let $\mathbb{E}$ denote the expectation. Let $(X_t)_{t\geq0}$ be a time-homogeneous Markov process with invariant measure $\mu$, and assume that its transition semigroup is ergodic,
$\mu$-reversible and strongly continuous on $L_2(\Omega,\mu)$.
We write $T_t=e^{-tA}$, where $A$ is nonnegative and self-adjoint so that
\[
    (T_t f)(x)
    =\mathbb E\!\left[f(X_t)\mid X_0=x\right]
\]
for $f\in L_2(\Omega,\mu)$ and $\mu$-almost every $x$.

\iffalse
We say that $(T_t^{(d)})$ satisfies scalar $L_2$-Poincar\'e inequality if for all $f: \Omega \to \com$ with $f \in {\rm dom}(A^{1/2})$,
\begin{equation}\label{eq:scalar-2P}
    \lambda \| f - \mathbb{E}(f) \|_2 \leq \Eform(f, f).
\end{equation}
In \cite[Proposition 2.4]{HT}, the authors proved that \eqref{eq:spectral-gap} for $(T_t^{(d)})$ is equivalent to \eqref{eq:scalar-2P} with the same constant.
\fi
To obtain estimates uniform in the matrix dimension, we use the following criterion for CMLSI in \cite[Theorem 3.9]{GJLL2025}.
\begin{lemma}\label{lem:7.2}
    Suppose that $(T_t)$ satisfies \eqref{eq:spectral-gap} with the constant $\lambda > 0$ and suppose that there exists $t_0 > 0$ and $C_0 \geq 1 $ such that
    \begin{equation}\label{eq:cb-time}
       \| T_{t_0} : L_1(\Omega, \mu) \to L_\infty(\Omega, \mu) \| \leq C_0. 
    \end{equation} 
    Then $(T_t)$ satisfies CMLSI with the constant
    \[
    \alpha \geq \frac{ \lambda }{\lambda t_0 + \log C_0 + 2}.
    \]
\end{lemma}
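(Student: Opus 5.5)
The plan is to show that for a suitable time $t_1$ the map $T_{t_1}$ is completely close to $E$ in the completely positive order. From that we get a fixed-ratio contraction of relative entropy at time $t_1$, which holds for every amplification $T_t\otimes{\rm id}_{\N}$, and we then convert this contraction into CMLSI. Throughout, $E=\mathbb E(\cdot)\1$, since the semigroup is ergodic.

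\emph{Step 1: $L_1\to L_\infty$ decay.} By $\mu$-symmetry, $\|T_{t_0/2}\|_{L_1\to L_2}^2=\|T_{t_0/2}\|_{L_2\to L_\infty}^2=\|T_{t_0}\|_{L_1\to L_\infty}\le C_0$. For $t\ge t_0$ we factor
\[
T_t-E=T_{t_0/2}\,(T_{t-t_0}-E)\,T_{t_0/2}.
\]
The spectral gap \eqref{eq:spectral-gap} gives $\|T_{t-t_0}-E\|_{L_2\to L_2}\le e^{-\lambda(t-t_0)}$, hence
\[
\|T_t-E\|_{L_1\to L_\infty}\le C_0e^{-\lambda(t-t_0)}.
\]
Since $T_t-E$ has a kernel $k_t(x,y)-1$, this means $|k_t-1|\le C_0e^{-\lambda(t-t_0)}$ almost everywhere. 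We choose $t_1=t_0+(\log C_0+2)/\lambda$, so that $|k_{t_1}-1|\le\varepsilon:=e^{-2}$.

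\emph{Step 2: cp-order and entropy contraction.} A kernel bound $(1-\varepsilon)\le k_{t_1}\le(1+\varepsilon)$ implies the completely positive order $(1-\varepsilon)E\le_{cp}T_{t_1}\le_{cp}(1+\varepsilon)E$. Indeed, the maps with kernels $k_{t_1}-(1-\varepsilon)\ge0$ and $(1+\varepsilon)-k_{t_1}\ge0$ are positive maps into a commutative algebra, hence completely positive. The same inequality therefore holds for $T_{t_1}\otimes{\rm id}_{\N}$ versus $E\otimes{\rm id}_{\N}$.

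Write $T_{t_1}=(1-\varepsilon)E+\varepsilon S$ with $S$ UCP and $ES=SE=E$. Joint convexity of relative entropy, together with data processing for $S$ (which fixes $E_*\rho$), yields
\[
D(T_{t_1*}\rho\,\|\,E_*\rho)\le\varepsilon D(\rho\,\|\,E_*\rho),
\]
uniformly over all amplifications.

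\emph{Step 3: from contraction to CMLSI (main obstacle).} A one-time contraction gives decay like $\varepsilon^{\lfloor t/t_1\rfloor}$, but not immediately the infinitesimal inequality with $\alpha\ge\lambda/(\lambda t_0+\log C_0+2)=1/t_1$. The first approach is
\[
D(\rho)-D(T_{t_1*}\rho)=\int_0^{t_1}I_A(T_{s*}\rho)\,\d s\le t_1\,I_A(\rho),
\]
which would give $(1-\varepsilon)D\le t_1 I_A$. This requires monotonicity of Fisher information along the flow, which is not generally available. The fallback is to instead use the cp-order $T_{t_1}\ge_{cp}(1-\varepsilon)E$ directly, together with the chain rule \eqref{eq:entropy-chain}, to compare $I_A$ with $D$. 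This is the argument of \cite[Theorem 3.9]{GJLL2025}, and it is where the precise constant $\lambda/(\lambda t_0+\log C_0+2)$ (coming from the choice $\log(1/\varepsilon)=2$) arises. This conversion is the step I expect to be hardest; Steps 1--2 are routine.
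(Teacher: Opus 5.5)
\textbf{There is a genuine gap: Step 3 is not carried out.} Steps 1 and 2 are sound. The $TT^*$ identity, the factorization $T_t-E=T_{t_0/2}(T_{t-t_0}-E)T_{t_0/2}$ and the spectral gap give $\|T_t-E\|_{L_1\to L_\infty}\le C_0e^{-\lambda(t-t_0)}$. The resulting kernel bound gives $(1-\varepsilon)E\le_{\mathrm{cp}}T_{t_1}$ at $t_1=t_0+(\log C_0+2)/\lambda$; only this lower bound is actually used. The splitting $T_{t_1}=(1-\varepsilon)E+\varepsilon S$, joint convexity, and data processing for $S$ (which fixes $E_*\rho$) then give the complete one-time contraction $D(T_{t_1*}\rho\,\|\,E_*\rho)\le e^{-2}D(\rho\,\|\,E_*\rho)$, uniformly in $\N$.

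That contraction is not CMLSI, and the step that turns it into CMLSI, which is the actual content of the lemma, is missing. Pointing to \cite[Theorem 3.9]{GJLL2025} for it is circular, because that theorem is the statement you are asked to prove. The paper gives no argument of its own and simply quotes the same result, so your proposal amounts to a correct partial reduction followed by the same citation.

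Concretely, here is why each of the routes you sketch fails to close the gap.
\begin{itemize}
\item \emph{Iterating the contraction.} This only gives $D(T_{t*}\rho\,\|\,E_*\rho)\le e^{2}e^{-2t/t_1}D(\rho\,\|\,E_*\rho)$. Because the prefactor $e^{2}$ exceeds $1$, this says nothing about the derivative at $t=0$, and that derivative is exactly what the inequality $\alpha D(\rho\,\|\,E_*\rho)\le I_A(\rho\,\|\,E_*\rho)$ controls.
\item \emph{The convexity route.} It needs $s\mapsto I_A(T_{s*}\rho\,\|\,E_*\rho)$ to be nonincreasing, i.e.\ convexity of the entropy along the flow. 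That is a curvature-type property, and it is not implied by a spectral gap plus an $L_1\to L_\infty$ bound.
\item \emph{The convexity route's constant.} Even if it held, it would only give $(1-e^{-2})D\le t_1I_A$, i.e.\ $\alpha\ge(1-e^{-2})/t_1$. This is strictly below the claimed $1/t_1=\lambda/(\lambda t_0+\log C_0+2)$. So your remark that the constant comes from the choice $\log(1/\varepsilon)=2$ is not substantiated by anything in the proposal.
\item \emph{The ``fallback''.} Using the cp-order together with \eqref{eq:entropy-chain} to compare $I_A$ with $D$ is a description, not an argument. You never exhibit an inequality that bounds $D(\rho\,\|\,E_*\rho)$ by a multiple of the infinitesimal quantity $I_A(\rho\,\|\,E_*\rho)$ rather than by a finite-time quantity.
\end{itemize}
What is missing is a concrete mechanism converting the fixed-time cp-order, or the fixed-time entropy contraction, into the infinitesimal inequality with the stated constant, uniformly over all amplifications $T_t\otimes{\rm id}_{\N}$. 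Without it the lemma is not established.
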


For $d \geq 1$, let $\M = M_d(\com)$ be the $d \times d$ complex matrix algebra equipped with the normalized trace $\operatorname{tr}_d$. For each $\mu$-integrable Borel function $f: \Omega \to \M$ and random variable $X \sim \mu$, we can define a random matrix $f(X)$. 
We identify bounded matrix-valued function space $L_\infty(\Omega; \M) \simeq L_\infty(\Omega, \mu) \overline{\otimes} \M$, equipped with the normalized trace $\mathbb{E} \otimes \operatorname{tr}_d$. For $1\leq p<\infty$, we use the notation
\[
    \|f\|_{L_p(\Omega;\M)}
    =
    \left(
        \mathbb{E}\operatorname{tr}_d(|f(x)|^p)
    \right)^{1/p}.
\]

For any $f \in L_\infty(\Omega; \M)$, we write $f=[f_{ij}]$, then
\[
    \mathbb E\!\left[f(X_t)\mid X_0=x\right]
    =
    \left[
        \mathbb E\!\left[f_{ij}(X_t)\mid X_0=x\right]
    \right]_{i,j=1}^d
    =
    \bigl[(T_t f_{ij})(x)\bigr]_{i,j=1}^d
\]
for $\mu$-almost every $x$.
Thus the transition operators acting on matrix-valued functions are
precisely the amplifications
\[
    T_t^{(d)}
    =T_t\otimes\operatorname{id}_{\M},
    \quad
    (T_t^{(d)}f)(x)
    =\mathbb E\!\left[f(X_t)\mid X_0=x\right].
\]
It is clear that $(T_t^{(d)})$ is a quantum Markov semigroup on $L_\infty(\Omega; \M)$ with the generator $A_d = A \otimes {\rm id}_\M$. The fixed point subalgebra is $\M$, and the corresponding conditional expectation is
$E = \mathbb{E} \otimes {\rm id}_{\M}$.
% We simly write $T_t$ for $T^{(d)}_t$.

Let $\Gamma$ be the carr\'e du champ of $(T_t)$. For suitable $f: \Omega \to \M $, direct computation shows that the carr\'e du champ of $(T^{(d)}_t)$ acts on $f = [f_{ij}]$ as
\[
\big[\Gamma^{(d)}(f, f) (x)_{ij} \big]= \Big[ \sum_{k = 1}^d \Gamma (f_{ki}, f_{kj}) (x) \Big], \quad x \in \Omega.
\]
In particular, $\Gamma^{(d)}(f,f)$ is positive.
Combining Lemma~\ref{lem:7.2} with
Theorem~\ref{Main-p-PI} implies the following estimate.

\begin{corollary}\label{coro:random mat}
Let $2 \leq p < \infty$. Suppose that the Markov semigroup $(T_t)$ satisfies \eqref{eq:spectral-gap} with $\lambda > 0$ and \eqref{eq:cb-time}. Then for any $f : \Omega \to \M_{\rm sa}$ with $f, f^2 \in {\rm dom}(A \otimes {\rm id}_\M)$, one has
\[
\| f - E(f) \|_{L_p(\Omega; \M)} \leq \frac{2 \sqrt{ 2(\lambda t_0 + \log C_0 + 2) p }}{\sqrt{\lambda}} \| \Gamma^{(d)}(f, f)^{1/2} \|_{L_{p}(\Omega; \M)}.
\]
\end{corollary}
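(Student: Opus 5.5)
The plan is to obtain a dimension-free MLSI constant for the amplified semigroup $(T_t^{(d)})$ on $L_\infty(\Omega;\M)\simeq L_\infty(\Omega,\mu)\overline{\otimes}M_d(\com)$, and then apply the tracial estimate \eqref{main-2} of Theorem~\ref{Main-p-PI} to $(L_\infty(\Omega;\M),\mathbb{E}\otimes\operatorname{tr}_d)$.

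\textbf{Step 1 (CMLSI).} The hypotheses of Lemma~\ref{lem:7.2} are exactly \eqref{eq:spectral-gap} with $\lambda$ and the ultracontractivity bound \eqref{eq:cb-time}. So $(T_t)$ satisfies CMLSI with
\[
\alpha \geq \frac{\lambda}{\lambda t_0+\log C_0+2}.
\]

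\textbf{Step 2 (MLSI for the amplification).} By definition, CMLSI means the MLSI holds for $T_t\otimes \mathrm{id}_{\N}$ for every auxiliary algebra $\N$. Take $\N=M_d(\com)$ with the normalized trace. Then $(T_t^{(d)})=(T_t\otimes\mathrm{id}_{\M})$ satisfies \eqref{eq:mlsi} relative to its fixed-point expectation $E=\mathbb{E}\otimes\mathrm{id}_{\M}$, with the same $\alpha$. In the tracial finite setting the state formulation \eqref{eq:state-MLSI} reduces to \eqref{eq:mlsi}, since relative entropy of densities with respect to $\tau$ is $\Ent_E$. Here one should check that the fixed-point algebra of the amplification is $1\otimes\M$, which follows from ergodicity of $(T_t)$. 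This identification is the only point needing care; everything else is bookkeeping.

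\textbf{Step 3 (apply the theorem and simplify).} The amplification is a tracial quantum Markov semigroup with generator $A\otimes\mathrm{id}_{\M}$, and the hypothesis $f,f^2\in{\rm dom}(A\otimes\mathrm{id}_{\M})$ with $f$ self-adjoint is exactly what \eqref{main-2} requires. Applying \eqref{main-2} gives
\[
\|f-E(f)\|_{L_p(\Omega;\M)}\leq \frac{2\sqrt{2p}}{\sqrt{\alpha}}\,\|\Gamma^{(d)}(f,f)^{1/2}\|_{L_p(\Omega;\M)}.
\]
Here $\Gamma^{(d)}$ is the carr\'e du champ of $A\otimes\mathrm{id}_{\M}$, given by the matrix formula above. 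Substituting the lower bound on $\alpha$ turns $1/\sqrt{\alpha}$ into $\sqrt{(\lambda t_0+\log C_0+2)/\lambda}$, which yields exactly the stated constant, independent of $d$.
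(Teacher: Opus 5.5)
Your proposal is correct and follows the paper's route exactly: Lemma~\ref{lem:7.2} gives CMLSI with $\alpha\geq \lambda/(\lambda t_0+\log C_0+2)$, specializing to $\N=M_d(\com)$ gives \eqref{eq:mlsi} for $T_t\otimes{\rm id}_{\M}$ relative to $E=\mathbb{E}\otimes{\rm id}_{\M}$, and \eqref{main-2} then produces the stated dimension-free constant. Your remarks that the fixed-point algebra is $1\otimes\M$ (by ergodicity) and that \eqref{eq:state-MLSI} reduces to \eqref{eq:mlsi} in the tracial case fill in details the paper leaves implicit in its one-line justification.
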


Huang and Tropp \cite[Theorem~3.1]{HuangTropp2021} establish matrix-valued
$L_p$-Poincar\'e inequalities with $O(\sqrt p)$ growth under a positive scalar Bakry--\'Emery curvature bound.
Corollary~\ref{coro:random mat} gives the same order of growth from a spectral gap and \eqref{eq:cb-time}, but without imposing a curvature lower bound. Under the same assumptions as in Corollary \ref{coro:random mat}, Gao, Junge, LaRacuente and Li \cite[Theorem~5.12]{GJLL2025} obtained an $L_p$-Poincar\'e inequality controlled by
$\|\Gamma^{(d)}(f,f)^{1/2}\|_\infty$. Our estimate refines the gradient dependence of this bound by replacing
the $L_\infty$-norm with the corresponding $L_p$-norm.

\subsection{Mixed $q$-Gaussian algebras}
We recall the definition of mixed $q$-Gaussian algebras, cf. \cite{Bozejko1991, Bozejko1997, Bozejko1994,Lust-Piquard1999}. Let $H$ be a real Hilbert space with an orthonormal basis $\{e_j\}_{j \in J}$, and $Q = (q_{ij})_{i,j \in J}$ be a symmetric matrix satisfying $\sup_{i,j \in J} |q_{ij}| < 1$ (we can also consider the case $\sup |q_{ij}| \leq 1$ but with some necessary and appropriate modifications). For each $k \geq 1$, let $S_k$ be the permutation group of $\lk{1, \cdots, k}$, with adjacent transpositions $\sigma_j$ between $j$ and $j+1$ (for $k \geq 2$). Define the strictly positive definite twisted operators  (cf. \cite[Theorem 1.2]{Bozejko1994}) $P^{(k)}$: $H^{\otimes k} \to H^{\otimes k}$ for $k \geq 1$: let $u = e_{j_1} \otimes \cdots \otimes e_{j_k}$ be any $k$-basis atom, 
\[
    P^{(k)}(u) = \sum_{\sigma \in S_k} a(Q, \sigma, u) \sigma^{-1}(u)
\]
where $a(Q, \sigma, u)$ are coefficients determined by $Q$. More precisely,
\[
	a(Q, \sigma, u) = q_{j_{m_l} j_{m_{l+1}} } \prod_{i = 0}^{l-1} q_{j_{ \varphi_i(m_{l-i}) } j_{ \varphi_i(m_{l-i + 1}) } }, \quad \sigma = \sigma_{m_1} \cdots \sigma_{m_l},\ \varphi_i = \sigma_{m_{l- i + 1}} \cdots \sigma_{m_l},
\] 
with $a(Q, {\rm id}, u) = 1$. When $q_{ij} \equiv q$, this simplifies to
\[
    P^{(k)}(u) = \sum_{\sigma \in S_k} q^{I(\sigma)} \sigma(u), \quad I(\sigma) = \text{number of inversions in } \sigma.
\]

Let $\mathcal F_Q^{\text{finite}}(H) \subseteq \bigoplus_{k \geq 0} H^{\otimes k} $ be the subspace of finite sums, where $H^{\otimes 0} = \real\Omega$ and $\Omega$ is the vacuum vector. The \textit{mixed $q$-Fock space} $\mathcal F_Q(H)$ is defined as the completion of $\mathcal F_Q^{\text{finite}}(H)$ under the $Q$-deformed inner product
\[
    \langle \xi, \eta \rangle_Q = \delta_{kl} \langle \xi, P^{(l)} \eta \rangle, \quad \xi \in H^{\otimes k}, \eta \in H^{\otimes l},\ k, l \geq 0
\]
where $\lara{\cdot, \cdot}$ is the usual inner product on $H$. Define the left $r$-creation operator $l_i$ acting on $\mathcal F_Q(H)$ and its adjoint, $r$-annihilation operator $l_i^*$, as
\begin{align*}
    &l_i(\xi) = e_i \otimes \xi, \qquad l_i^*(\Omega) = 0, \\
    &l_i^*(e_{j_1} \otimes \cdots \otimes e_{j_k}) = \sum_{m=1}^k \delta_{i j_m} q_{j_m j_{m-1}} \cdots q_{j_m j_1} e_{j_1} \otimes \cdots \otimes \widehat{e_{j_m}} \otimes \cdots \otimes e_{j_k}
    \end{align*}
where $\widehat{e_{j_m}}$ denotes omission. These satisfy the deformed canonical commutation relations
\[
l_i^* l_j - q_{ij} l_j l_i^* = \delta_{ij} {\rm id}_{\mathcal F_Q(H)}.
\]
We define the \emph{$q$-Gaussian algebra} $\Gamma_Q(H)$ as the von Neumann algebra generated by the operators $\{s_i = l_i + l_i^* : i \in J\}$. It can be endowed with the canonical normal faithful tracial state 
$$\tau(x) = \langle \Omega, x\Omega \rangle_Q.$$ 

When $q_{ij} \equiv 1$, this is the Bosonic model for canonical commutation relations (CCR), and when $q_{ij} \equiv -1$, this is the Fermionic model for canonical anticommutation relations (CAR). When $q_{ij} \equiv 0$, this is Voiculescu's free Gaussian algebra \cite{Voiculescu1985}.

The \textit{mixed $q$-Ornstein-Uhlenbeck semigroup} $(T_t)_{t \geq 0}$ on $\Gamma_Q(H)$ is defined via second quantization. First, for a contraction $T : H \to H$, define $\mathcal F_Q(T) : \mathcal F_Q(H) \to \mathcal F_Q(H)$ by
\[
    \mathcal F_Q(T)\Omega = \Omega, \quad \mathcal F_Q(T)(f_1 \otimes \cdots \otimes f_k) = Tf_1 \otimes \cdots \otimes Tf_k.
\]
Recall that the map $\phi_H: \Gamma_Q(H) \to \mathcal F_Q(H), \, x \mapsto x \Omega$ extends to a unitary $\phi_H: L^2(\Gamma_Q(H), \tau) \to \mathcal F_Q(H)$, and note that $\phi_H(s_i) = e_i$. The second quantization $\Gamma_Q(T): \Gamma_Q(H) \to \Gamma_Q(H)$ is defined by
\[
    \Gamma_Q(T) = \phi_H^{-1} \circ \mathcal F_Q(T) \circ \phi_H.
\]
% We remark that $\Gamma_Q(T)$ is the unique completely positive map on $\Gamma_Q(H)$ satisfying the above formula, thus it is well-defined. Moreover, $\Gamma_Q$ is a functor, i.e. $\Gamma_Q(ST) = \Gamma_Q(S)\Gamma_Q(T)$ for two contractions $S, T$ on $H$. 
The \textit{mixed $q$-Ornstein-Uhlenbeck semigroup} is given by $T_t = \Gamma_Q(e^{-t} {\rm id}_H)$, $t \geq 0$. It is clear that the mixed $q$-Ornstein-Uhlenbeck semigroup is an ergodic quantum Markov semigroup. See \cite{Bozejko1991, Bozejko1997, Bozejko1994,Lust-Piquard1999} for more details.

% $(T_t)_{t \geq 0}$ extends to a semigroup of contractions on $L^2(\Gamma_R(H), \tau)$. It is symmetric and ergodic with generator $L = \phi_H^{-1} \circ N \circ \phi_H$ with $N: \F_R^{\text{finite}}(H) \to \F_R^{\text{finite}}(H)$ the number operator defined as $k {\rm id}$ on its eigenspace $H^{\otimes k}$.

Hypercontractivity for fermionic systems is originated in Gross' seminal work \cite{Gross1972, Gross1976}. After some partial results \cite{Gross1976, Lindsay1990, LindsayMeyer1992}, Carlen and Lieb achieved the optimal time bound \cite{CarlenLieb1993}. Then, these results were extended by Biane to $q$-Gaussians \cite{Biane1997}, subsequently generalized to free products by Junge \emph{et al.} \cite{Junge2015-HC}. 

For the MLSI side, we refer to \cite{Wirth2021,Brannan2022}, where the authors showed that $T_t$ satisfies a gradient estimate, implying the MLSI with constant $2$. Hence, Theorem \ref{Main-p-PI} yields
\begin{corollary}
	Let $2 \leq  p < \infty$. Denote by $\Gamma$ the carr\'{e} du champ of the mixed $q$-Ornstein-Uhlenbeck semigroup $(T_t)_{t \geq 0}$. Then
	\[
		\| x - \tau(x) \|_p \leq 2\sqrt{p} \| \Gamma(x, x)^{1/2} \|_{p}
	\]  
	holds for all suitable selfadjoint $x$.
\end{corollary}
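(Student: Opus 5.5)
The corollary follows from Theorem~\ref{Main-p-PI}, estimate \eqref{main-2}, once we know that the mixed $q$-Ornstein--Uhlenbeck semigroup satisfies \eqref{eq:mlsi} with $\alpha=2$. Since $2\sqrt{2p}/\sqrt{2}=2\sqrt{p}$, this gives exactly the stated constant. The plan has three parts: check the standing hypotheses, identify the fixed-point expectation, and import the MLSI constant.

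\textbf{Hypotheses.} $(\Gamma_Q(H),\tau)$ is a finite von Neumann algebra with a normal faithful tracial state. Each $T_t=\Gamma_Q(e^{-t}\mathrm{id}_H)$ is the second quantization of a contraction, hence normal, unital and completely positive. The family is $\tau$-symmetric because $\mathcal F_Q(e^{-t}\mathrm{id}_H)$ is a selfadjoint operator on $\mathcal F_Q(H)$. The semigroup law and $w^*$-continuity follow from functoriality of second quantization and from strong continuity on the Fock space, where $T_t$ acts as $e^{-tk}$ on the $k$-particle level.

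\textbf{Fixed-point expectation.} Under the unitary $\phi_H$, the generator $A$ becomes the number operator $N$, whose kernel is $\mathbb{R}\Omega$. So ergodicity holds, $\M^T=\com\1$, and $E(x)=\tau(x)\1$. Hence $\|x-E(x)\|_p=\|x-\tau(x)\|_p$.

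\textbf{MLSI constant.} By \cite{Wirth2021,Brannan2022}, $(T_t)$ satisfies the gradient estimate $\Gamma(T_tx,T_tx)\le e^{-2t}T_t\Gamma(x,x)$ in the appropriate form. Together with ergodicity, this yields entropy decay $\Ent_E(T_t x)\le e^{-2t}\Ent_E(x)$. Differentiating at $t=0$ gives \eqref{eq:mlsi} with $\alpha=2$.

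Substituting $\alpha=2$ into \eqref{main-2}, for selfadjoint $x$ with $x,x^2\in{\rm dom}(A)$, gives the claim. The phrase ``suitable selfadjoint $x$'' is read as exactly this domain condition. For instance, selfadjoint polynomials in the $s_i$ are suitable: $\phi_H$ maps them into $\mathcal F_Q^{\rm finite}(H)$, which lies in ${\rm dom}(N)$, and this class is stable under products.

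\textbf{Main obstacle.} The only delicate step is matching the normalization of the cited MLSI constant to the convention \eqref{eq:mlsi}, namely $\alpha\Ent_E\le\Eform(x,\log x)$ with $\tau$ normalized. The decay rate $e^{-2t}$ for entropy corresponds to $\alpha=2$ in this convention. Whether the cited results obtain $2$ directly or via $\Gamma$-regularity plus the intertwining $\delta T_t=e^{-t}\widehat T_t\delta$ should be checked. I would first confirm this on the commutative Gaussian case $q_{ij}\equiv1$, where the classical Ornstein--Uhlenbeck MLSI constant $2$ is known to be sharp.
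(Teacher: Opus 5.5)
Your proposal is correct and follows the paper's route exactly: the paper imports the MLSI constant $\alpha=2$ for the mixed $q$-Ornstein--Uhlenbeck semigroup from \cite{Wirth2021,Brannan2022} and inserts it into \eqref{main-2}, using $2\sqrt{2p}/\sqrt{2}=2\sqrt{p}$. Your extra checks only make explicit what the paper leaves implicit: symmetry and complete positivity of $T_t$, ergodicity so that $E(x)=\tau(x)\1$, selfadjoint polynomials in the $s_i$ as admissible $x$, and the normalization check that $\alpha=2$ matches the convention \eqref{eq:mlsi}.
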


Notably, Efraim and Lust-Piquard established the $L_p$-Poincar\'{e}-type inequalities with $O(\sqrt{p})$ constants for CAR \cite{Efraim2008}. Later, it was extended to $q$-Gaussians in \cite{Junge2015} with $O(p)$ constants. We recover and extend their results to the more general mixed $q$-Gaussian algebras.

\subsection{Group von Neumann algebras}
Let $G$ be a discrete group with identity $e$. The left regular representation $\lambda : G \to B(\ell_2(G))$ is given by $\lambda(g)\delta_h = \delta_{gh}$, where $\{\delta_g\}_{g \in G}$ is the canonical basis of $\ell_2(G)$. The \textit{group von Neumann algebra} $\mathcal{L}(G)$ is generated by $\text{span}\{\lambda(g) : g \in G\}$. It is endowed with a canonical faithful normal tracial state: for $f = \sum_g \widehat{f}(g)\lambda(g) \in \mathcal{L}(G)$
 $$\tau(f) = \langle \delta_e, f\delta_e \rangle = \widehat{f}(e). $$ 
A function $\psi : G \to \mathbb{R}_+$ is a \textit{conditionally negative definite length} (cn-length) if
\begin{enumerate}
    \item $\psi(e) = 0$ and $\psi(g) = \psi(g^{-1})$
    \item For any finite set $\{g_i\} \subseteq G$ and scalars $\{\xi_i\}$ with $\sum \xi_i = 0$:
    \[
    \sum_{i,j} \xi_i \overline{\xi_j} \psi(g_i^{-1}g_j) \leq 0.
    \]
\end{enumerate}
Given such $\psi$, we define a semigroup $(T_t)_{t \geq 0}$ on $\mathcal{L}(G)$ by
\[
T_t(\lambda(g)) = e^{-t\psi(g)}\lambda(g), \quad \forall \ g \in G.
\]
Its infinitesimal generator $A$ acts as $A\lambda(g) = \psi(g)\lambda(g)$. It is clear that $(T_t)$ is a quantum Markov semigroup. Let $H = \lk{g \in G: \psi(g) = 0}$, then the fixed-point subalgebra is $\M^T = \lambda(H)''$.

Let $\Aalg$ be the $*$-subalgebra of $\mathcal L(G)$ consisting of elements that can be written as finite combinations of $\lambda(g)$, $g \in G$. Then $\Aalg$ is weakly dense in $\mathcal L(G)$ such that $A (\Aalg) \subseteq \Aalg$ and $T_t(\Aalg) \subseteq \Aalg$. 
% In particular, $(T_t)_{t \geq 0}$ is ergodic if $\psi(g) = 0 \Leftrightarrow g = e$. 

The associated \textit{Gromov form} is
\[
K(g,h) = \frac{1}{2}\left( \psi(g) + \psi(h) - \psi(g^{-1}h) \right).
\]
For finitely supported $f = \sum_x \widehat{f}(x)\lambda(x)$, $g = \sum_y \widehat{g}(y)\lambda(y)$, the carr\'{e} du champ is 
\begin{align*}
\Gamma(f,g) &= \sum_{x,y} \overline{\widehat{f}(x)}\widehat{g}(y) K(x,y) \lambda(x^{-1}y).
%\Gamma^2(f,g) &= \sum_{x,y} \widehat{f}(x)\overline{\widehat{g}(y)} K(x,y)^2 \lambda(x^{-1}y).
\end{align*}
According to \cite[Theorem 3.3]{Brannan2023}, additional growth conditions are required to obtain the MLSI. 
\begin{lemma}\label{Growth-Cond}
Suppose
\begin{enumerate}[(i)]
\item (spectral gap) $\lambda = \inf_{g \notin H} \psi(g) > 0$,
\item (growth condition) $\exists\, r > 0$ such that $C_r = \sum_{g \notin H} r^{\psi(g)} < \infty$.
\end{enumerate}
Then $(T_t)_{t \geq 0}$ satisfies \eqref{eq:mlsi} with constant $\alpha = (2\lambda^{-1} \log(2C_r) - 2 \log r)^{-1}$.
\end{lemma}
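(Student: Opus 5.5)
The plan is to go through entropy decay, as in the proof of Lemma~\ref{lem:regular}, where MLSI was shown equivalent (via \cite[Theorem 2.7]{Wirth2025}) to the exponential decay
\[
\Ent_E(T_t x)\le e^{-\alpha t}\Ent_E(x).
\]
Since the fixed-point algebra is $\M^T=\lambda(H)''$ with its trace-preserving conditional expectation $E$, and the pair $(T_t,E)$ consists of Fourier multipliers, all the estimates can be done on the Fourier side.

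\textbf{Step 1: ultracontractivity off $H$.} The aim is to show that $T_t(\id-E)$ maps $L_1$ into $L_\infty$ with norm at most $C_r$ once $e^{-t}\le r$, i.e.\ $t\ge -\log r$ (assume $r<1$; otherwise replace $r$). For $x=\sum_g \widehat x(g)\lambda(g)$ one has $|\widehat x(g)|=|\tau(x\lambda(g)^*)|\le\|x\|_1$. Hence
\[
\Bigl\|\sum_{g\notin H} e^{-t\psi(g)}\widehat x(g)\lambda(g)\Bigr\|_\infty\le \sum_{g\notin H} e^{-t\psi(g)}\|x\|_1\le C_r\|x\|_1 ,
\]
using $e^{-t\psi(g)}\le r^{\psi(g)}$ when $e^{-t}\le r$. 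Combined with the spectral gap (i), this gives an $L_1\to L_\infty$ bound that decays in $t$:
\[
\|T_t-E\|_{1\to\infty}\le C_r\,e^{-(t+\log r)\lambda}\quad\text{for } t\ge -\log r,
\]
obtained by factoring $T_t-E=T_{s}(\id-E)\,T_{t-s}(\id-E)$ with the $L_2$ gap. More precisely, one should write $T_{t}-E=T_{t_1}(\id-E)\,T_{t_2}(\id-E)\,T_{t_1}(\id-E)$ through $L_1\to L_2\to L_2\to L_\infty$. Here $\|T_{t_1}(\id-E)\|_{1\to2}^2=\sum_{g\notin H}e^{-2t_1\psi(g)}\le C_r$ when $e^{-2t_1}\le r$, and the middle factor contributes $e^{-\lambda t_2}$.

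\textbf{Step 2: from $\|T_{t_0}-E\|_{1\to\infty}\le \frac12$ to entropy contraction.} Choose $t_0$ with $C_r e^{-\lambda(t_0-\log r^{-1})}\le \tfrac12$, that is
\[
t_0=\lambda^{-1}\log(2C_r)-\log r .
\]
Then for a density $x$, $T_{t_0}x$ lies in the multiplicative band
\[
\tfrac12 E(x)\le T_{t_0}x\le \tfrac32 E(x)
\]
in the appropriate sense. This step is delicate because $E(x)\in\lambda(H)''$ is not scalar. I would use the known argument from \cite{Brannan2023} and \cite{GJLL2025} (compare Lemma~\ref{lem:7.2}): the bound $\|T_{t_0}-E\|_{1\to\infty}\le 1/2$, applied in the completely bounded/amplified form relative to $E$, yields $\Ent_E(T_{t_0}x)\le \tfrac12\Ent_E(x)$ (or a comparable contraction). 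Combined with the semigroup property and the monotonicity of $t\mapsto\Ent_E(T_tx)$, this gives exponential decay with rate of order $1/t_0$. This step is the main obstacle.

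\textbf{Step 3: converting back to MLSI and matching the constant.} Use the monotonicity $\frac{\d}{\d t}\Ent_E(T_tx)=-\Eform(T_tx,\log T_tx)$. The decay of Step~2 then gives \eqref{eq:mlsi} with
\[
\alpha\gtrsim t_0^{-1}.
\]
The stated constant $\alpha=(2\lambda^{-1}\log(2C_r)-2\log r)^{-1}=(2t_0)^{-1}$ should fall out, with the factor $2$ coming from the $L_1\to L_2$ squaring in Step~1. I would check that all elements involved lie in $\Aalg$, by working with finitely supported $x$ and regularizing $x+\varepsilon\1$ as in the proof of Theorem~\ref{prop:mlsi-to-q}.
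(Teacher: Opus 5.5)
The paper does not prove this lemma; it imports it from \cite[Theorem 3.3]{Brannan2023}. The argument there combines the return-time bound from your Step~1 with a curvature (gradient) estimate, and your proposal never uses the latter.

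Step~1 is fine, though the detour through $L_2$ is unnecessary. For $t\geq-\log r$ one has directly $\sum_{g\notin H}e^{-t\psi(g)}\leq C_r e^{-\lambda(t+\log r)}$, which equals $\frac12$ at $t_0=\lambda^{-1}\log(2C_r)-\log r$.

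Step~2, which you flag as the main obstacle, is the routine part. Take the norm relative to $\M^T$ in the completely bounded sense; the plain $L_1\to L_\infty$ bound only gives $\|T_{t_0}x-E(x)\|_\infty\leq\frac12\tau(x)$. Together with the $\lambda(H)''$-bimodule property, this should make $T_{t_0}-\frac12E$ completely positive. Then $T_{t_0}=\frac12E+\frac12Q$ with $Q$ unital completely positive and $EQ=E$, and joint convexity plus data processing give $\Ent_E(T_{t_0}x)\leq\frac12\Ent_E(x)$.

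The genuine gap is Step~3. Iterating a contraction at the single time $t_0$ only gives $\Ent_E(T_tx)\leq2^{-\lfloor t/t_0\rfloor}\Ent_E(x)\leq 2e^{-(\log 2)t/t_0}\Ent_E(x)$. This is decay with prefactor $2$: it is vacuous for small $t$ and says nothing about the derivative $-\Eform(x,\log x)$ at $t=0$. The equivalence with \eqref{eq:mlsi} used in Lemma~\ref{lem:regular} needs prefactor $1$. Monotonicity of $t\mapsto\Ent_E(T_tx)$ is merely $\Eform(T_tx,\log T_tx)\geq0$ and does not help.

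What you need is that the entropy production $t\mapsto\Eform(T_tx,\log T_tx)$ is nonincreasing, because then
\[
 \tfrac12\Ent_E(x)\leq\Ent_E(x)-\Ent_E(T_{t_0}x)=\int_0^{t_0}\Eform(T_sx,\log T_sx)\,\dd s\leq t_0\,\Eform(x,\log x),
\]
which is exactly $\alpha=(2t_0)^{-1}$.

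This is where the cn-length structure enters. The form $\Gamma_2(f,f)=\sum_{g,h}\overline{\widehat f(g)}\widehat f(h)K(g,h)^2\lambda(g^{-1}h)$ is completely positive, because $K^2$ is the Schur square of a positive definite kernel. In other words, the semigroup has nonnegative (complete) Ricci curvature, and \cite{Brannan2023} use this gradient estimate to control the entropy production along the flow.

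In particular, the factor $2$ in the constant does not come from the squaring in Step~1; that factorization reproduces the same $t_0$. It comes from the $\frac12$ in the contraction combined with the bound $\int_0^{t_0}\leq t_0\,\Eform(x,\log x)$. As written, Steps~2--3 produce no MLSI constant at all.
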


% \begin{remark}
	% We remark that non-ergodic semigroups are concerned in \cite[Theorem 3.3]{Brannan2023} as well.
	%by replacing $g \neq e$ with $g \notin \M^T$ in both conditions, where $\M^T$ denotes the fixed point subspace.
% \end{remark}

\begin{remark}
	Hypercontractivity estimates were established in \cite[Theorem B]{Junge2017} under the ``Poisson-like'' length function assumption, which is stronger than the growth condition in Lemma \ref{Growth-Cond}.
\end{remark}

By Theorem \ref{Main-p-PI}, we establish the following $L_p$-Poincar\'{e}-type inequalities.
\begin{corollary}
	Let $2 \leq p < \infty$. Under the same assumptions in Lemma \ref{Growth-Cond},
	\[
		\| f - E(f) \|_p \leq C\sqrt{p} \| \Gamma(f, f)^{1/2} \|_{p}
	\]	
	holds for all selfadjoint $f \in \Aalg$, where $C = 2\sqrt{ 4\lambda^{-1} \log(2C_r) - 4\log r }$.
\end{corollary}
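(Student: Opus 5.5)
The corollary is a direct specialization of Theorem \ref{Main-p-PI}, estimate \eqref{main-2}, once we know that the cn-length semigroup $(T_t)$ on $\mathcal L(G)$ satisfies \eqref{eq:mlsi} with an explicit constant. The plan is to invoke Lemma~\ref{Growth-Cond}. Under the spectral gap hypothesis $\lambda=\inf_{g\notin H}\psi(g)>0$ and the growth condition $C_r<\infty$, it gives \eqref{eq:mlsi} relative to the conditional expectation $E$ onto $\M^T=\lambda(H)''$, with
\[
\alpha=\bigl(2\lambda^{-1}\log(2C_r)-2\log r\bigr)^{-1}.
\]
We should also confirm that $\alpha>0$. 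This needs $2\lambda^{-1}\log(2C_r)-2\log r>0$, which holds because $C_r\ge r^{\psi(g)}$ for any $g\notin H$, so $\log(2C_r)>\psi(g)\log r\ge\lambda\log r$ when $r\ge1$. The case $r<1$ is trivial.

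Next we check the domain hypotheses of Theorem \ref{Main-p-PI}. Any selfadjoint $f\in\Aalg$ is a finite combination of the $\lambda(g)$, and so is $f^2$. Since $A\lambda(g)=\psi(g)\lambda(g)$, both $f$ and $f^2$ lie in ${\rm dom}(A)$. Moreover, $\Gamma(f,f)$ is given by the finite Gromov-form expression, so all quantities are well defined.

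Finally we substitute into \eqref{main-2}:
\[
\|f-E(f)\|_p\le \frac{2\sqrt{2p}}{\sqrt\alpha}\|\Gamma(f,f)^{1/2}\|_p
=2\sqrt{p}\,\sqrt{2\bigl(2\lambda^{-1}\log(2C_r)-2\log r\bigr)}\,\|\Gamma(f,f)^{1/2}\|_p .
\]
This gives exactly $C=2\sqrt{4\lambda^{-1}\log(2C_r)-4\log r}$. There is no genuine obstacle here: the substantive work is contained in Lemma~\ref{Growth-Cond} and Theorem~\ref{Main-p-PI}. The only point needing care is that the MLSI in Lemma~\ref{Growth-Cond} is formulated relative to the same conditional expectation $E$ onto $\lambda(H)''$ that appears in Theorem~\ref{Main-p-PI}, and this holds in the non-ergodic setting described there.
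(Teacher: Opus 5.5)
Your proposal is correct and is exactly the paper's argument: you insert the MLSI constant $\alpha=(2\lambda^{-1}\log(2C_r)-2\log r)^{-1}$ from Lemma~\ref{Growth-Cond} into \eqref{main-2}, and your arithmetic $2\sqrt{2p}/\sqrt{\alpha}=2\sqrt{p}\,\sqrt{4\lambda^{-1}\log(2C_r)-4\log r}$ is right. Your side remark that positivity of $\alpha$ is ``trivial'' for $r<1$ is too quick when $C_r<1/2$: there one needs some $g\notin H$ with $\psi(g)$ close to $\lambda$ to get $2C_r>r^{\lambda}$. The check is not needed anyway, since the lemma already supplies a valid MLSI constant.
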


Now we present some typical group von Neumann algebras.
\subsubsection{Free group $\mathbb F_n$} 
Let $G = \mathbb{F}_n$ be the free group on $n$ generators with the word length $\psi(g) = |g|$. The Gromov form satisfies $K(g,h) = |\min(g,h)|:= \max\lk{|w|: g = wg', h = wh'}$ (length of longest common prefix subword). According to \cite{Haagerup1979}, $\psi$ is conditionally negative definite. The corresponding Markov semigroup is the free Poisson semigroup
\[
	P_t(f) = \sum_{g \in G} e^{-|g|t} \widehat{f}(g)\lambda(g).
\]
As verified in \cite[Example 3.6]{Brannan2023}, this semigroup satisfies the requisite growth conditions, implying the MLSI. Moreover, the hypercontractivity estimates were established in \cite{Junge2017,Ricard2016}. Thus it also satisfies $L_p$-Poincar\'{e}-type inequalities with $O(\sqrt{p})$ constants. While Junge and Zeng \cite{Junge2015_2} obtained identical estimates through the $\Gamma^2$-condition, our approach is different from theirs. 

\subsubsection{Walsh systems}
Let $\ent_n$ be the finite cyclic group and $\delta_{x, y}$ the Kronecker delta notation. Let $\mathbb{G} = \ent_n^m = \ent_n \times \cdots \times \ent_n$ be the cartesian product of $m$ copies of $\ent_n$ with length function
\[
	\psi(x_1, \cdots, x_m) = m - \delta_{x_1, 0} - \cdots - \delta_{x_m, 0}, \quad (x_1, \cdots, x_m)  \in \ent_n^m.
\]
This function admits spectral gap $\lambda = 1$ and exhibits growth constant $C_r = \sk{1 + r(n-1)}^m - 1 < \infty$. Consequently, the corresponding Markov semigroup satisfies $L_p$-Poincar\'{e}-type inequalities with $O(\sqrt{p})$ constants. We extend Efraim and Lust-Piquard's result \cite{Efraim2008} for Walsh systems $n=2$ to arbitrary $n$. Although Junge and Zeng \cite{Junge2015_2} derived comparable bounds via the $\Gamma^2$-condition, our framework offers a distinct methodological alternative.

\subsection{Compact quantum groups of Kac type}
We refer to \cite{Maes1998, Woronowicz1998} for fundamental theory of compact quantum groups. Let $(\mathbb{G}, \Delta)$ be a compact quantum group of \emph{Kac type}, meaning its Haar state is a trace (denoted by $\tau$ throughout). We consider quantum Markov semigroups on $L_\infty(\mathbb{G})$ symmetric with respect to $\tau$.

A quantum Markov semigroup $T_t: L_\infty(\mathbb{G}) \to L_\infty(\mathbb{G})$ is \emph{central} if for all $t \geq 0$, it is both left and right invariant
\[
\Delta \circ T_t = (T_t \otimes {\rm id}) \circ \Delta = ({\rm id} \otimes T_t) \circ \Delta,
\]
where $\Delta: L_\infty(\mathbb{G}) \to L_\infty(\mathbb{G}) \overline{\otimes} L_\infty(\mathbb{G})$ is the comultiplication.

Following \cite{Brannan2023}, we shall consider several specific central Markov semigroups which all satisfy the MLSI.

\subsubsection{Free Orthogonal Quantum Groups $O_N^+$}
The free orthogonal quantum groups \(O_N^+\) constitute a fundamental class of compact quantum groups introduced by Wang and Van Daele \cite{Wang1995,VanDaele1996}. For a fixed integer \(N \geq 2\), let \(O_N^+ = (C(O_N^+), \Delta)\) be the compact quantum group, where \(C(O_N^+)\) is the universal $C^*$-algebra generated by \(\{u_{ij}\}_{1 \leq i,j \leq N}\) such that \(u_{ij} = u_{ij}^*\) and the fundamental matrix \(u = [u_{ij}] = \sum_{i,j} u_{ij} \otimes e_{ij} \in C(O_N^+) \otimes M_N(\mathbb{C}) \) is unitary. The comultiplication \(\Delta: C(O_N^+) \to C(O_N^+) \otimes_{\text{min}} C(O_N^+)\) is given by  
\[
\Delta(u_{ij}) = \sum_{k=1}^N u_{ik} \otimes u_{kj}.
\]
Note that $u$ defined above is a unitary representation of $O_N^+$, called the fundamental representation. This quantum group admits a surjective $*$-homomorphism onto the classical orthogonal group \(C(O_N)\), realizing \(O_N\) as a quantum subgroup. 

The representation theory of \(O_N^+\), developed by Banica \cite{Banica1996,Banica1997}, exhibits remarkable structure. The set \(\operatorname{Irr}(O_N^+)\) of irreducible unitary representations is indexed by non-negative integers \(\mathbb{N}\), with fusion rules given by  
\[
k \otimes m \cong |k - m| \oplus (|k - m| + 2) \oplus \cdots \oplus (k + m), \quad k, m \in \mathbb{N}.
\] 
The trivial representation corresponds to \(k = 0\), while \(k = 1\) corresponds to the fundamental representation \(u\). The dimension \(n_k = \dim(u^k)\) satisfies the recurrence \(n_k N = n_{k+1} + n_{k-1}\) with initial conditions \(n_0 = 1\), \(n_1 = N\), and is explicitly given by \(n_k = U_k(N)\), where \(U_k\) are the Chebyshev polynomials of the second kind defined recursively via  
\[
U_0(x) = 1, \quad U_1(x) = x, \quad xU_k(x) = U_{k-1}(x) + U_{k+1}(x).
\]
Fixing an orthonormal basis for the representation space of \(u^k\), we obtain matrix coefficients \(u_{ij}^k \in \mathcal{O}(O_N^+) \subset C(O_N^+)\), and the Hopf $*$-algebra \(\mathcal{O}(O_N^+)\) of regular functions is linearly spanned by these coefficients. The Haar state \(\tau: C(O_N^+) \to \mathbb{C}\) is faithful on \(\mathcal{O}(O_N^+)\), and the associated GNS construction yields the von Neumann algebra \(L_\infty(O_N^+)\). Crucially, the matrix coefficients form an orthogonal basis for \(L_2(O_N^+)\) with  
\[
\tau\big((u_{i'j'}^{k'})^* u_{ij}^k\big) = \frac{\delta_{ii'}\delta_{jj'}\delta_{kk'}}{n_k}.
\]
Subsequently, we adopt a slight abuse of notation, identifying $\mathcal{O}(O_N^+)$ with subspaces of $L_\infty(O_N^+)$ and $L_2(O_N^+)$ through the conventional GNS constructions.

Central Markov semigroups on \(O_N^+\) are completely determined by their action on matrix coefficients. Specifically, as in \cite{Cipriani2014,Franz2017}, the \textit{heat semigroup} $T_t = e^{-tA}$ on \(O_N^+\) is given by its generator acting diagonally on irreducible representations
\[
	A: \mathcal{O}(O_N^+) \to \mathcal{O}(O_N^+);\  A(u_{ij}^k) = \lambda_k u_{ij}^k, \quad \lambda_k  = \frac{U_k'(N)}{U_k(N)} \in \mathbb{C}.
\]
It corresponds to the quantum Laplace-Beltrami operator. Then $T_t$ extends uniquely to an ergodic central quantum Markov semigroup $T_t : L_\infty(O_N^+) \to L_\infty(O_N^+)$. Explicitly, key estimates for its eigenvalues and dimensions are given in \cite[Lemma 1.7]{Franz2017} by  
\[
n_k \leq N^k, \quad \frac{k}{N} \leq \lambda_k \leq \frac{k}{N-2}, \quad k \geq 0.
\]

The hypercontractivity estimates and $2$-LSI with $O(N)$ constants were established in \cite{Franz2017, Brannan2021}, with related estimates for optimal time bounds. For the MLSI side, Brannan \emph{et al.} \cite{Brannan2023} investigated the complete MLSI. Specifically, they established the MLSI for $T_t \otimes {\rm id}_{\N}$, where $\N$ is any finite von Neumann algebra, using geometric curvature lower bounds and estimates of the complete bounded return time. Their derived constant is quantified as $\alpha = (2N\log\sk{ \sqrt{3}N / (\sqrt{3} - \sqrt{2}) } )^{-1}$. 
\begin{corollary}
Let $\Gamma$ denote the carr\'{e} du champ associated with $T_t$. For all $2 \leq p < \infty$ and selfadjoint element $x \in \mathcal{O}(O_N^+)$, the following inequality holds
\[
	\| x - \tau(x) \|_p \leq C\sqrt{p} \| \Gamma(x, x)^{1/2} \|_{p}.
\]
Here, $C = 4\sqrt{ N\log\sk{ \sqrt{3}N / (\sqrt{3} - \sqrt{2}) } }$.
\end{corollary}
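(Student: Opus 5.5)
This corollary follows by feeding the complete MLSI constant of Brannan et al.\ \cite{Brannan2023} into \eqref{main-2} of Theorem~\ref{Main-p-PI}. The setting fits the tracial framework of Section~\ref{sec:PI-MLSI-to-p}. Since $O_N^+$ is of Kac type, the Haar state $\tau$ is a normal faithful tracial state on the finite von Neumann algebra $L_\infty(O_N^+)$. The heat semigroup $T_t=e^{-tA}$ is a $\tau$-symmetric quantum Markov semigroup: its generator acts diagonally, with real eigenvalues $\lambda_k\ge k/N\ge0$, on the orthogonal basis of matrix coefficients. It is ergodic, so $\M^T=\com\1$ and $E(x)=\tau(x)\1$.

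Next I check the domain hypothesis of Theorem~\ref{Main-p-PI}. Take a selfadjoint $x\in\mathcal O(O_N^+)$. Then $x$ is a finite linear combination of matrix coefficients $u^k_{ij}$. Because $\mathcal O(O_N^+)$ is a $*$-algebra, $x^2$ is also such a finite combination. Since $A$ maps $\mathcal O(O_N^+)$ into itself diagonally, both $x$ and $x^2$ lie in ${\rm dom}(A)$. Hence $x\in\Aalg_{\rm sa}$ and $\Gamma(x,x)$ is given by the algebraic formula.

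I then invoke the MLSI. Brannan et al.\ prove the complete MLSI for $T_t\otimes{\rm id}_{\N}$ for every finite $\N$. Taking $\N=\com$ gives \eqref{eq:mlsi} on $L_\infty(O_N^+)$ with
\[
\alpha=\Bigl(2N\log\bigl(\sqrt3N/(\sqrt3-\sqrt2)\bigr)\Bigr)^{-1}.
\]
Plugging this into \eqref{main-2} gives, for $p\ge2$,
\[
\|x-\tau(x)\|_p\le \frac{2\sqrt{2p}}{\sqrt\alpha}\,\|\Gamma(x,x)^{1/2}\|_p .
\]
The coefficient simplifies as
\[
\frac{2\sqrt2}{\sqrt\alpha}=2\sqrt2\cdot\sqrt{2N\log\bigl(\sqrt3N/(\sqrt3-\sqrt2)\bigr)}=4\sqrt{N\log\bigl(\sqrt3N/(\sqrt3-\sqrt2)\bigr)}=C .
\]

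The only point needing care is matching conventions. The MLSI of \cite{Brannan2023} is stated for densities with $\Ent$ taken relative to the fixed-point expectation, whereas \eqref{eq:mlsi} is stated for arbitrary positive invertible $x$. The two versions agree because both sides of \eqref{eq:mlsi} are homogeneous of degree one in $x$; the $c\log c$ terms cancel since $\Eform(x,\1)=0$. The normalization of the Dirichlet form $\Eform(x,\log x)=\tau(A(x)\log x)$ must also match theirs. Once these are confirmed, the corollary is immediate.
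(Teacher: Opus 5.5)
Your proposal is correct and matches the paper's argument: the corollary is a direct application of \eqref{main-2} in Theorem~\ref{Main-p-PI}, using the MLSI constant $\alpha=\bigl(2N\log(\sqrt3N/(\sqrt3-\sqrt2))\bigr)^{-1}$ from \cite{Brannan2023} specialized to $\N=\com$, and your computation $2\sqrt{2}/\sqrt{\alpha}=4\sqrt{N\log(\sqrt3N/(\sqrt3-\sqrt2))}$ reproduces the stated $C$. Your check that $x,x^2\in\mathcal O(O_N^+)\subseteq{\rm dom}(A)$ and your homogeneity remark for matching MLSI conventions are sound details that the paper leaves implicit.
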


\subsubsection{Quantum Automophism Groups}
The quantum automorphism groups provide another significant class of compact quantum groups that generalize the symmetries of finite-dimensional C*-algebras. Given a finite-dimensional C*-algebra $B$ equipped with its canonical tracial state $\tau$ (obtained by restricting the unique normalized trace on $\mathrm{End}(B)$ through the left-regular representation $B \hookrightarrow \mathrm{End}(B)$), we define the \textit{quantum automorphism group} of the pair $(B,\tau)$, denoted by $G^+(B,\psi)$. To construct this quantum group, let $H = L^2(B,\tau)$ and fix an orthonormal basis $\{e_i\}$ for $H$, with corresponding matrix units $e_{ij} \in B(H)$. The universal C*-algebra $C^u(G^+(B,\tau))$ is generated by elements $\{u_{ij}\}$ satisfying the following fundamental relations:
\begin{enumerate}
    \item The fundamental matrix $u = [u_{ij}] = \sum_{i,j} u_{ij} \otimes e_{ij} \in C^u(G^+(B,\tau)) \otimes B(H)$ is unitary;
    \item The coaction preserves multiplication: $(1 \otimes m)u_{12}u_{13} = u(1 \otimes m)$ where $m: H \otimes H \to H$ is the multiplication map of $B \cong H$;
    \item The coaction preserves the unit: $u(1 \otimes \eta) = 1 \otimes \eta$ where $\eta: \mathbb{C} \to B \cong H$ is the unit map $\alpha \mapsto \alpha 1_B$.
\end{enumerate}
The comultiplication $\Delta: C^u(G^+(B,\tau)) \to C^u(G^+(B,\tau)) \otimes_{\text{min}} C^u(G^+(B,\tau))$ is defined on generators by $\Delta(u_{ij}) = \sum_k u_{ik} \otimes u_{kj}$, making $G^+(B,\psi) = (C^u(G^+(B,\psi)), \Delta)$ a compact quantum group.

A particularly important special case occurs when $B = C(X)$ for a finite set $X$ with $N$ elements and $\tau$ is the uniform probability measure. Here, $G^+(B,\tau)$ reduces to the \textit{quantum permutation group} $S_N^+$ \cite{Wang1995}, where the generators satisfy the additional relations $u_{ij} = u_{ij}^* = u_{ij}^2$ and $\sum_k u_{ik} = 1 = \sum_k u_{ki}$ for all $1 \leq i \leq N$. Crucially, while $S_N^+$ coincides with the classical symmetric group $S_N$ for $N \leq 3$, it becomes infinite-dimensional and noncommutative for $N \geq 4$.

For general $B$ with $\dim B \geq 4$, the representation theory of $G^+(B,\tau)$ parallels that of $O_N^+$. The irreducible representations $\operatorname{Irr}(G^+(B,\tau))$ are indexed by non-negative integers $\mathbb{N}$, with fusion rules
\[
k \otimes m \cong |k-m| \oplus (|k-m|+1) \oplus \cdots \oplus (k+m-1) \oplus (k+m), \quad k,m \in \mathbb{N}.
\]
% The trivial representation corresponds to $k=0$, while the fundamental representation $u$ corresponds to the label $1$. 
Let $v_{ij}^k$ be the generic coefficient of the $k$-th irreducible representation $v^k$ of $ G^+(B,\tau) $ and $\mathcal{O} \sk{ G^+(B,\tau) } $ be the Hopf algebra spanned by such matrix coefficients. The dimensions $m_k = \dim(v^k)$ satisfy $m_k = U_{2k}(\sqrt{d})$ where $d = \dim B$ and $U_k$ are the Chebyshev polynomials.

Analogous to $O_N^+$, the \textit{heat semigroup} on $G^+(B,\tau)$ (with $d = \dim B \geq 4$) is defined as $T_t = e^{-tA} : L_\infty( G^+(B,\tau) ) \to L_\infty(G^+(B,\tau))$ given by (see \cite{Cipriani2014})
\[
	T_t \sk{v_{ij}^k} = e^{-\xi_k t} v_{ij}^k,\ A \sk{v_{ij}^k} = \xi_k v_{ij}^k, \ \xi_k = \frac{U_{2k}'(\sqrt{d})}{2\sqrt{d}  U_{2k}(\sqrt{d})}.
\]
This semigroup is ergodic and satisfies dimension and eigenvalue estimates
\[
m_k \leq (d-1)^k, \quad \frac{k}{d} \leq \xi_k \leq \frac{k}{\sqrt{d}(\sqrt{d} - 2)}, \quad k \geq 0.
\]

The hypercontractivity estimate for quantum permutation group $S_N^+$ was established in \cite{Franz2017}. For general quantum automophism groups, Brannan \emph{et al.} \cite{Brannan2023} obtain the MLSI with constant quantified as $\alpha = ( 2d \log\mk{ 3(d-1)/(4 - \sqrt{13}) } )^{-1}$. Then, Theorem \ref{Main-p-PI} yields the following $L_p$-Poincar\'{e}-type inequalities with $O(\sqrt{p})$ constants.

\begin{corollary}
	Let $\Gamma$ denote the carr\'{e} du champ associated with $T_t$. For any $2 \leq p < \infty$ and any selfadjoint element $x \in \mathcal{O}(G^+(B, \tau))$, the following inequality holds
	\[
		\| x - \tau(x) \|_p \leq C\sqrt{p} \| \Gamma(x, x)^{1/2} \|_{p}.
	\]
	Here, $C = 4\sqrt{ d \log\mk{ 3(d-1)/(4 - \sqrt{13}) } }$.
\end{corollary}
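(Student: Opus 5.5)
This corollary should follow directly from Theorem~\ref{Main-p-PI}, inequality \eqref{main-2}, once we have the MLSI constant of Brannan \emph{et al.} \cite{Brannan2023} in the form required by \eqref{eq:mlsi}.

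First, I check the hypotheses. The heat semigroup $T_t=e^{-tA}$ on $L_\infty(G^+(B,\tau))$ is a $\tau$-symmetric quantum Markov semigroup on a finite von Neumann algebra; the Haar state is a trace because $G^+(B,\tau)$ is of Kac type. It is ergodic, so $E(x)=\tau(x)\1$. For a selfadjoint $x\in\mathcal O(G^+(B,\tau))$, both $x$ and $x^2$ are finite linear combinations of matrix coefficients $v^k_{ij}$, since $\mathcal O$ is a $*$-algebra. The generator $A$ acts diagonally on these coefficients with eigenvalues $\xi_k$, so $x,x^2\in{\rm dom}(A)$ and $\Gamma(x,x)$ is defined algebraically.

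Second, I transfer the MLSI. The paper \cite{Brannan2023} proves complete MLSI with constant $\alpha=(2d\log[3(d-1)/(4-\sqrt{13})])^{-1}$. Restricting to the trivial ancilla $\N=\com$ gives the ordinary MLSI. What needs checking is that their normalization, stated via $\Ent_E$ and the Fisher information $\Eform(x,\log x)$ (or equivalently entropy decay $e^{-\alpha t}$), matches \eqref{eq:mlsi}. I expect this bookkeeping of normalization conventions to be the only real obstacle. If they state entropy decay, the standard equivalence obtained by differentiating at $t=0$ yields \eqref{eq:mlsi} with the same $\alpha$, because $\frac{d}{dt}\Ent_E(T_tx)=-\Eform(T_tx,\log T_tx)$.

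Third, I plug into \eqref{main-2}:
\[
\|x-\tau(x)\|_p\le \frac{2\sqrt{2p}}{\sqrt\alpha}\|\Gamma(x,x)^{1/2}\|_p
=2\sqrt{2p}\,\sqrt{2d\log\bigl[3(d-1)/(4-\sqrt{13})\bigr]}\,\|\Gamma(x,x)^{1/2}\|_p .
\]
Since $2\sqrt{2}\cdot\sqrt{2}=4$, this equals $C\sqrt p\,\|\Gamma(x,x)^{1/2}\|_p$ with $C=4\sqrt{d\log[3(d-1)/(4-\sqrt{13})]}$, as claimed. The case $p=2$ is covered as well, since \eqref{main-2} holds for all $p\ge2$.
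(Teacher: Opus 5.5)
Your proposal is correct and matches the paper's argument: the paper likewise takes the MLSI constant $\alpha=(2d\log[3(d-1)/(4-\sqrt{13})])^{-1}$ from \cite{Brannan2023} and inserts it into \eqref{main-2}. Your computation $2\sqrt{2p}/\sqrt{\alpha}=4\sqrt{p}\,\sqrt{d\log[3(d-1)/(4-\sqrt{13})]}$ reproduces the stated $C$, and your domain check and normalization remark (entropy decay versus \eqref{eq:mlsi}) are details the paper leaves implicit.
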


It seems that our results on $L_p$-Poincar\'{e}-type inequalities for heat semigroups associated with and \(O_N^+\) and $G^+(B,\tau)$ are even new.

\bigskip
{\bf AI statement.} The authors obtained Theorems 1.1, 3.4, and 4.4 in the ergodic setting independently, without any assistance from AI tools in 2025/06. Through subsequent discussions with ChatGPT-5.6 Sol, the authors developed proofs of these theorems in the non-ergodic setting. AI tools were also used to assist with LaTeX editing and language polishing. The authors have independently verified the mathematical arguments and take full responsibility for the correctness of all results presented in this paper.

{\textbf{Acknowledgments.}} 
Li Gao is supported in part by the National Natural Science Foundation of China (grant no. 12401163) and the Department of Science and Technology of Hubei Province (project nos. 2025EHA041 and 2025AFA044). 
% Marius Junge was partially supported by NSF Grant DMS-2247114. 
Zhendong Xu is partially supported by the National Research Foundation of Korea (NRF) grant funded by the Korea government (MSIT) (No.2020R1C1C1A01009681), Samsung Science and Technology Foundation under Project Number SSTF-BA2002-01 and the Basic Science Research Program through the National Research Foundation of Korea (NRF) (Grant RS-2022-NR069971).

\end{document}